\RequirePackage{fix-cm}
\documentclass[smallextended,envcountsect,a4paper,11pt]{svjour3}
\usepackage{fullpage}
\smartqed  % flush right qed marks, e.g. at end of proof
\usepackage[active]{srcltx}
\usepackage{latexsym,amssymb}
\usepackage{amsmath}
\usepackage{stmaryrd}
\usepackage[T1]{fontenc}
\usepackage[latin1]{inputenc}
\usepackage{hyperref}
\usepackage{enumerate}
\usepackage{pgf,tikz}
\usetikzlibrary{shapes,arrows,positioning,calc}
%%%%%%%%%%%%%%%%%%%%%%%%%%%
\newcommand{\op}{\llbracket}                                   
\newcommand{\cl}{\rrbracket}
\def\cont{\mathsf c}
\def\pv#1{\ensuremath{\mathsf{#1}}}

\def\Om#1#2{\ensuremath{\overline{\Omega}_{#1}{\pv{#2}}}}
\def\Cl#1{\ensuremath{\mathcal{#1}}}

\newtheorem{MyThm}{Theorem}
\newtheorem{Thm}[theorem]{Theorem}
\newtheorem{Prop}[theorem]{Proposition}
\newtheorem{Lemma}[theorem]{Lemma}
\newtheorem{Cor}[theorem]{Corollary}

\hyphenation{self-dual}
%%%%%%%%%%%%%%%%%%%%%%%%%%%%%%%%%%%%%%%%%%%%%%%%%%%%%%%%%%%%%%%%%%%%%%%%%%%%%

\begin{document}

\title{Pseudovarieties of ordered completely regular semigroups%
  \thanks{The first author acknowledges partial funding by CMUP
    (UID/MAT/00144/2013) which is funded by FCT (Portugal) with
    national (MCTES) and European structural funds (FEDER) under the
    partnership agreement PT2020. The work was carried out in part at
    Masaryk University, whose hospitality is gratefully acknowledged,
    with the support of the FCT sabbatical scholarship
    SFRH/BSAB/142872/2018. %
    The second author was supported by the Institute for Theoretical
    Computer Science (GAP202/12/G061), Czech Science Foundation. }}

\author{Jorge Almeida
  \and
        Ond\v rej Kl\'\i ma
}

\authorrunning{J. Almeida and O. Kl\'ima}

\institute{J. Almeida\at
  CMUP, Dep.\ Matem\'atica, Faculdade de Ci\^encias,
  Universidade do Porto, Rua do Campo Alegre 687, 4169-007 Porto,
  Portugal\\
  \email{jalmeida@fc.up.pt} \\
  \and
  O. Kl\'\i ma\at
  Dept.\ of Mathematics and Statistics, Masaryk University,
  Kotl\'a\v rsk\'a 2, 611 37 Brno, Czech Republic\\
  \email{klima@math.muni.cz}
}

\date{}

\maketitle

\begin{abstract}
  This paper is a contribution to the theory of finite semigroups and
  their classification in pseudovarieties, which is motivated by its
  connections with computer science. The question addressed is what
  role can play the consideration of an order compatible with the
  semigroup operation. In the case of unions of groups, so-called
  completely regular semigroups, the problem of which new
  pseudovarieties appear in the ordered context is solved. As
  applications, it is shown that the lattice of pseudovarieties of
  ordered completely regular semigroups is modular and that taking the
  intersection with the pseudovariety of bands defines a complete
  endomorphism of the lattice of all pseudovarieties of ordered
  semigroups.
\end{abstract}

\keywords{ordered semigroup \and pseudovariety \and completely
  regular semigroup \and band \and complete lattice homomorphism}
  
\subclass{20M19 \and 20M07 \and 20M35}

\section{Introduction}
\label{intro}

There are several fronts in which semigroup theory has developed as
semigroups appear naturally in many contexts. On the algebraic front,
varieties of semigroups, in the sense of universal algebra, have
received considerable attention in the literature. Thus, the study of
the lattice of all varieties of semigroups is one of the classical
topics in semigroup theory. One of the first results in this direction
was a complete description of the lattice of all varieties of bands
(semigroups in which all elements are idempotent) which was described
independently by Birjukov~\cite{Birjukov:1970},
Fennemore~\cite{Fennemore:1971a,Fennemore:1971aII} and
Gerhard~\cite{Gerhard:1970}. Bringing groups into play, a natural
generalization of both classes is the class of completely regular
semigroups (meaning semigroups in which every element lies in a
subgroup) which also forms a variety in a suitable algebraic
signature, that of unary semigroups. Consequently, intensive attention
was also paid to the study of the lattice of varieties of completely
regular semigroups. To recall the basic contributions to that theory
one should mention the book by Petrich and
Reilly~\cite{Petrich&Reilly:1999}
and a series of seminal papers by
Pol\'ak~\cite{Polak:1985,Polak:1987,Polak:1988}.

With the applications of semigroup theory in the algebraic theory of
regular languages via Eilenberg's correspondence, the focus was
shifted in part to the theory of pseudovarieties of finite semigroups,
that is, classes of finite semigroups that are closed under taking
homomorphic images, subsemigroups, and finite direct products. Some of
the results, for example the mentioned description of the lattice of
all varieties of bands, were translated automatically to the case of
pseudovarieties due to the local finiteness property. However, many
results required the development of new techniques. For example,
results concerning pseudovarieties of finite completely regular
semigroups were obtained by Trotter and the first author
in~\cite{Almeida&Trotter:1999a} using profinite techniques; such
techniques evolved in the aftermath of Reiterman's
\cite{Reiterman:1982} use of profinite semigroups to describe
pseudovarieties and were extensively developed by the first author. For
an overview of known results in this area we refer to the books on the
theory of finite semigroups by the first author~\cite{Almeida:1994a}
and Rhodes and Steinberg~\cite{Rhodes&Steinberg:2009qt}.

Another step in the applications in the theory of regular languages
was a refinement of Eilenberg's correspondence made by
Pin~\cite{Pin:1995a} which captures what classes of languages are
obtained when complementation is discarded. The theory of finite
ordered semigroups introduced there may be viewed as a generalization
of the theory of finite semigroups. Simply put, ordered semigroups are
semigroups enriched by a partial order which is compatible with the
multiplication, also called a stable partial order. So, in particular,
every semigroup together with the equality relation is an ordered
semigroup. Furthermore, every pseudovariety of semigroups $\pv V$
determines a pseudovariety of ordered semigroups $\pv V^o$ by taking
all possible stable partial orders on every semigroup from the
original pseudovariety $\pv V$. The pseudovarieties of the form $\pv
V^o$ are denoted by the same symbol as the original pseudovariety $\pv
V$; we call them {\em selfdual}, since they are characterized by the
property of being closed under taking dually ordered semigroups.
Denote the lattice of all subpseudovarieties of ordered semigroups of
the pseudovariety $\pv U$ by $\mathcal L_o(\pv U)$, and the lattice of
all subpseudovarieties of semigroups of the pseudovariety $\pv V$ by
$\mathcal L(\pv V)$. Then, by the previous observations, $\mathcal
L(\pv V)$ may be viewed as a sublattice of $\mathcal L_o(\pv V)$.
Notice also that the ordered version of the theory of profinite
semigroups was introduced by Pin and Weil in~\cite{Pin&Weil:1996b} and
further developed in the literature.

One can say that, besides the results in connection with the theory of
varieties of regular languages, a systematic study of pseudovarieties
of ordered semigroups was not so far significantly developed. We can
mention only the result by Emery~\cite{Emery:1999}, who described the
lattice of all pseudovarieties of ordered normal bands $\mathcal
L_o(\pv{NB})$, that is bands satisfying the identity $xyzx=xzyx$.
Furthermore, Ku\v ril~\cite{Kuril:2015} studied the lattice of all
pseudovarieties of ordered bands. Although his paper deals with
varieties of ordered bands, it yields the same result for
pseudovarieties, because bands are locally finite. The description of
that lattice was possible since he proved that there are just a few
pseudovarieties of ordered bands which are not selfdual. More
precisely, all of them are subpseudovarieties of the pseudovariety of
normal bands. Thus, by the description of the lattice of
pseudovarieties of (unordered) bands and by the result of Emery,
\cite{Kuril:2015} yields a complete description of the lattice
$\mathcal L_o(\pv{B})$.

Our research is inspired by results from~\cite{Kuril:2015}, as we have
tried to extend the results to a richer class. Another pseudovariety
without new subpseudovarieties of ordered semigroups is the
pseudovariety of all groups $\pv G$, because each finite ordered group
is only trivially ordered. More formally, we have $\mathcal L_o(\pv
G)=\mathcal L(\pv G)$.

The main result of this paper is the following statement.

\begin{MyThm}
  \label{t:main-above-sl}
  Let $\pv V$ be a pseudovariety
  of ordered completely regular semigroups containing all
  semilattices. Then $\pv V$ is selfdual.
\end{MyThm}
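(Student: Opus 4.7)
My plan is to prove that every $(S,\le)\in\pv V$ admits the dual stable order, i.e.\ $(S,\ge)\in\pv V$. The argument naturally splits into two main stages: first I establish selfduality at the level of the band of idempotents, and then I lift it to all of $S$ using the structure of completely regular semigroups.

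For the band reduct, the subsemigroup $E=E(S)$ of idempotents inherits the order, so $(E,\le)\in\pv V\cap\pv B$, and $\pv V\cap\pv B$ contains $\pv{Sl}$ by hypothesis. Combining Ku\v ril's~\cite{Kuril:2015} theorem that pseudovarieties of ordered bands not contained in $\pv{NB}$ are selfdual with Emery's~\cite{Emery:1999} classification of $\mathcal L_o(\pv{NB})$ (from which the analogous selfduality for sub-pseudovarieties of ordered normal bands containing $\pv{Sl}$ should be extracted), I conclude that $\pv V\cap\pv B$ is selfdual; in particular $(E,\ge)\in\pv V$.

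To lift this to $S$, I would use the structural decomposition $S=\bigsqcup_{\alpha\in Y}S_\alpha$ as a semilattice $Y$ of completely simple semigroups, noting that the canonical projection $\pi\colon S\to Y$ is an order-preserving homomorphism and that each $\mathcal H$-class---being a finite group---is an antichain in $(S,\le)$, since stable partial orders on finite groups are trivial. These constraints reduce the stable order on $S$ to being controlled by its restrictions to $E$ and to $Y$, both of which, together with their duals, already lie in $\pv V$. The concrete construction would then realize $(S,\ge)$ as a homomorphic image of a subsemigroup of a direct product in $\pv V$ built from $(S,\le)$, $(E,\ge)$, and a dually oriented copy of $Y$.

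The main obstacle is precisely this last construction. The stable order on $S$ is not literally determined by its restrictions to $E$ and $Y$: comparisons between elements in distinct $\mathcal D$-classes carry extra data that must be handled explicitly, and the idempotent retraction $a\mapsto a^0$ is not generally a homomorphism. I expect this step to proceed either via an explicit combinatorial description of the possible stable orders on a CR semigroup, or, more likely, via a profinite-inequality argument in the spirit of Pin--Weil~\cite{Pin&Weil:1996b}, showing that every pseudoinequality $u\le v$ valid in $\pv V$ automatically admits its dual $v\le u$, thanks to the trivially ordered $\mathcal H$-classes and the availability of both orientations of each semilattice inside $\pv V$.
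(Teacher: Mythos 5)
Your proposal has a genuine gap at exactly the point you identify, and the step you do carry out contains an error. First, for a general completely regular semigroup the set $E(S)$ of idempotents is \emph{not} a subsemigroup (that holds only for orthogroups), so the claim that $(E,\le)\in\pv V\cap\pv B$, and hence that $(E,\ge)\in\pv V$, does not follow; what survives of your first stage is only the abstract fact that the pseudovariety $\pv V\cap\pv B$ is selfdual, which indeed follows from Ku\v ril and Emery but gives you no particular ordered band inside $\pv V$ attached to a given $S\in\pv V$. Second, and more importantly, the lifting step that you defer is the entire content of the theorem: the stable order on $S$ is not controlled by its restriction to idempotents together with the structure semilattice, and knowing that $\pv V\cap\pv B$ is selfdual does not even give you symmetry of the syntactic quasiorder $\rho_{\pv V}$ on idempotent pseudowords, since a pseudoinequality $u^\omega\le v^\omega$ must be tested in all members of $\pv V$, not just in its bands. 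Your sketched product construction out of $(S,\le)$, $(E,\ge)$ and a dual copy of $Y$ has no mechanism for reversing comparabilities between group elements lying in distinct $\mathcal D$-classes, which is where the difficulty actually sits.

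The paper's proof follows the second of the two routes you gesture at, but the substance is considerably more than ``every valid pseudoinequality admits its dual for formal reasons.'' It associates to $\pv V$ the fully invariant system of closed stable quasiorders $\rho_{\pv V}$ on the free profinite semigroups $\Om nS$, first reduces to the locally finite case by writing $\pv G$ as a directed union of locally finite $\pv H_i$ and intersecting $\pv V$ with $\pv{CR(H_i)}$, and then proves symmetry of each $\rho_n$ by induction on $|\cont(u)|$. The induction is driven by the derived systems $\rho^0$ and $\rho^1$ (obtained via the functions $0$ and $1$), which must themselves be shown to be complete systems of pseudoinequalities sandwiched between $\sim_{\pv{CR(H)}}$ and $\sim_{\pv{Sl}}$ (with a separate symmetry argument when $\rho^0\not\subseteq{\sim_{\pv{Sl}}}$), together with an ordered analog of the trace--kernel lemma: for $\Cl J$-equivalent $s,t$ one has $s\mathrel{\rho}t$ iff $s^0\mathrel{\rho}t^0$ and $st^{-1}\mathrel{\rho}(st^{-1})^0$. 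This is the machinery that replaces your missing construction; none of it is present in your proposal, so the argument as written does not establish the theorem.
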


Hence, one needs to deal with pseudovarieties of ordered completely
regular semigroups which do not contain all semilattices. It is not
difficult to show that such pseudovarieties contain only normal
orthogroups, which are completely regular semigroups in which
idempotents form normal bands. Thus, we can state that all
pseudovarieties of ordered completely regular semigroups which are not
selfdual are contained in $\mathcal L_o(\pv{NOCR})$, where $\pv{NOCR}$
is the pseudovariety of all finite normal orthogroups. We also prove
the following result.

\begin{MyThm}
  \label{t:main-nocr}
  The lattice $\mathcal L_o(\pv{NOCR})$ is isomorphic to the direct product 
  of the lattices $\mathcal L_o(\pv{NB})$ and $\mathcal L (\pv G)$.
\end{MyThm}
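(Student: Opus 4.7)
The plan is to construct mutually inverse lattice maps. Define
$$\Phi\colon \mathcal L_o(\pv{NOCR}) \to \mathcal L_o(\pv{NB}) \times \mathcal L(\pv G), \qquad \Phi(\pv V) = (\pv V \cap \pv{NB},\ \pv V \cap \pv G),$$
which is well-defined since $\pv{NB}, \pv G \subseteq \pv{NOCR}$ and since $\mathcal L_o(\pv G) = \mathcal L(\pv G)$ (finite ordered groups are trivially ordered, as noted in the introduction). In the opposite direction, set $\Psi(\pv U, \pv H) = \pv U \vee \pv H$, the join being taken inside $\mathcal L_o(\pv{NOCR})$.

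The identity $\Phi \circ \Psi = \mathrm{id}$ amounts to verifying $(\pv U \vee \pv H) \cap \pv{NB} = \pv U$ and $(\pv U \vee \pv H) \cap \pv G = \pv H$, which is a short check using that $\pv{NB} \cap \pv G$ is the trivial pseudovariety. Meet-preservation of $\Phi$ is immediate from set-theoretic distribution of intersection; once bijectivity is known, join-preservation follows because $\pv V_1 \vee \pv V_2 \subseteq \Psi(\Phi(\pv V_1) \vee \Phi(\pv V_2))$, and applying $\Phi$ combined with $\Phi \circ \Psi = \mathrm{id}$ yields $\Phi(\pv V_1 \vee \pv V_2) \subseteq \Phi(\pv V_1) \vee \Phi(\pv V_2)$, with the reverse inclusion trivial.

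The substance lies in proving $\Psi \circ \Phi = \mathrm{id}$, that is,
$$\pv V = (\pv V \cap \pv{NB}) \vee (\pv V \cap \pv G) \quad \text{for every } \pv V \in \mathcal L_o(\pv{NOCR}).$$
The inclusion $\supseteq$ is clear. For the converse, given $S \in \pv V$ I would invoke the structural fact that every finite normal orthogroup $S$ is a divisor of a direct product $E(S) \times G$, where $G$ is a finite group dividing a power of the maximal subgroups of $S$. Since $E(S) \in \pv V \cap \pv{NB}$ and each maximal subgroup of $S$ lies in $\pv V \cap \pv G$ (both being obtained as subsemigroups of $S$), the semigroup $S$ belongs to $(\pv V \cap \pv{NB}) \vee (\pv V \cap \pv G)$. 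This is the normal-orthogroup analogue of the classical Clifford decomposition and depends essentially on the normal-band identity $xyzx = xzyx$.

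The main obstacle is extending this decomposition to the ordered setting: one must verify that any stable partial order on $S$ is induced from a stable order on $E(S)$ combined with the trivial order on $G$, so that $S$ appears as an \emph{ordered} divisor of $E(S) \times G$. This is where the restriction to normal orthogroups is used in full strength; Theorem~\ref{t:main-above-sl} handles the complementary case where pseudovarieties contain all semilattices and are automatically selfdual, whereas the present theorem pins down the additional freedom available for stable orderings inside $\pv{NOCR}$.
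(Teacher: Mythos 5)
Your overall architecture coincides with the paper's: your $\Phi$ and $\Psi$ are exactly the maps $\iota$ and $\gamma$ used there (the paper additionally splits $\Cl L_o(\pv{NB})$ as $\Cl L_o(\pv{Sl})\times\Cl L(\pv{RB})$, which is cosmetic), and the proof does reduce to showing that the two composites are the identity. But both composites are left with genuine gaps. For $\Phi\circ\Psi=\mathrm{id}$: the triviality of $\pv{NB}\cap\pv G$ does not yield $(\pv U\vee\pv H)\cap\pv{NB}=\pv U$, since intersection does not distribute over joins of pseudovarieties in general. The actual argument takes a normal band $S$ that is a quotient of a subsemigroup $T$ of $S_1\times S_2$ with $S_1\in\pv U$, $S_2\in\pv H$, uses that $s\mapsto s^\omega$ is a homomorphism on any normal orthogroup (a consequence of $(xy)^\omega=x^\omega y^\omega$ in \pv{NOCR}, Lemmas~\ref{l:basic-properties-nocr} and~\ref{l:canonical-homomorphisms-on-band}) to replace $T$ by the band $E(T)$, and then observes that a band inside $S_1\times S_2$ projects injectively to $S_1$ because $S_2$ is a group; the companion equality $(\pv U\vee\pv H)\cap\pv G=\pv H$ needs the separate fact that a group quotient of a finite semigroup lifts to a subgroup. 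None of this is in your sketch.

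For $\Psi\circ\Phi=\mathrm{id}$, which you correctly identify as the substance, you invoke an unordered structural fact and then explicitly name its ordered extension as ``the main obstacle'' without overcoming it --- but that obstacle \emph{is} the theorem. Two concrete problems. First, the structural fact is stated imprecisely: a normal orthogroup embeds in $E(S)\times\prod_{e\in E(S)}H_e^0$, where $H_e^0$ is a group with an adjoined zero, not a group; trading the zeros for genuine group factors requires adjoining two-element semilattice factors, and whether those factors (ordered as $U^+$, as $U^-$, or trivially) lie in $\pv V\cap\pv{NB}$ is exactly where the difficulty sits. Second, your claim that any stable order on $S$ is induced from one on $E(S)$ together with the trivial order on a group is precisely what must be proved, and the paper does it by cases: if $\pv V\models x^\omega\le x^\omega y^\omega x^\omega$ one builds isotone homomorphisms $\psi_e:S\to H_e^\top$ (sending $s$ to $ese$ when $es^\omega e=e$ and to $\top$ otherwise), with isotonicity checked using that very pseudoinequality; the dual case is dual; and if neither pseudoinequality holds, then $\pv{Sl}\subseteq\pv V$ by Lemma~\ref{l:not-above-semilattices}, Theorem~\ref{t:Sl} gives selfduality, and one reduces to the trivially ordered case using $H_e^0$. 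So Theorem~\ref{t:main-above-sl} is not merely the ``complementary case'': it is an ingredient of this proof. As written, your argument asserts the conclusion at the decisive step.
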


Of course, we can not say that we completely described the lattice
$\mathcal L_o(\pv{CR})$, because there are parts which are not known,
namely $\mathcal L(\pv{CR})$ and particularly $\mathcal L(\pv{G})$.
The lattice $\mathcal L(\pv{CR})$ has been extensively studied and
much of the deeper results about it are based on Pol\'ak's work
\cite{Polak:1985,Polak:1987,Polak:1988}. When the idempotents form a
subsemigroup, we say that the semigroup is \emph{orthodox}. Orthodox
completely regular semigroups are also known as \emph{orthogroups}.
They form a pseudovariety, denoted \pv{OCR}. Pol\'ak's methods yield a
complete description of the lattice $\mathcal{L}(\pv{OCR})$ in terms
of $\mathcal L(\pv{G})$ \cite{Pastijn:1991}. Thus, up to a knowledge
of the lattice of pseudovarieties of groups, the lattice of
pseudovarieties of ordered semigroups $\mathcal L_o(\pv{OCR})$ is
described. In any case, the current knowledge about $\Cl L{\pv{CR}}$
is sufficient to obtain the following applications.

\begin{MyThm}
  \label{t:main-modular}
  The lattice $\Cl L_o(\pv{CR})$ is modular.
\end{MyThm}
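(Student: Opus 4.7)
The plan is to derive modularity of $\Cl L_o(\pv{CR})$ from modularity of two of its sublattices: $\Cl L(\pv{CR})$, modular by classical results going back to Pol\'ak's work (as cited in the introduction), and $\Cl L_o(\pv{NOCR})$, modular by Theorem~\ref{t:main-nocr} as the direct product of $\Cl L_o(\pv{NB})$ (modular per Kuril's description extending Emery's work) and $\Cl L(\pv G)$ (whose modularity is classical).

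By Theorem~\ref{t:main-above-sl} together with the remark following it, every $\pv V \in \Cl L_o(\pv{CR})$ either contains $\pv{Sl}$---in which case it is selfdual and lies in $\Cl L(\pv{CR})$---or is contained in $\pv{NOCR}$, and hence lies in $\Cl L_o(\pv{NOCR})$. Thus $\Cl L_o(\pv{CR})$ is the union of the two modular sublattices, overlapping in $\Cl L(\pv{NOCR})$.

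To verify the modular identity $\pv A \vee (\pv B \cap \pv C) = (\pv A \vee \pv B) \cap \pv C$ for $\pv A \subseteq \pv C$ and arbitrary $\pv B$, I would argue by cases on which of $\pv A, \pv B, \pv C$ contain $\pv{Sl}$. The general strategy is to replace $\pv B$ by either $\pv B \vee \pv{Sl}$ (when $\pv{Sl} \subseteq \pv A$) or $\pv B \cap \pv{NOCR}$ (when $\pv B \subseteq \pv{NOCR}$ or $\pv C \subseteq \pv{NOCR}$), reducing the identity to modularity in one of the two sublattices. Auxiliary ``bridge'' identities of the form $(\pv B \vee \pv{Sl}) \cap \pv X = \pv{Sl} \vee (\pv B \cap \pv X)$ (for $\pv X \supseteq \pv{Sl}$ with $\pv B \subseteq \pv{NOCR}$) are needed to translate between the two kinds of replacements, and each follows from modularity in $\Cl L_o(\pv{NOCR})$ after noting that the relevant terms fall inside $\pv{NOCR}$.

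The main obstacle is the case $\pv A \subseteq \pv C \subseteq \pv{NOCR}$ with $\pv B$ selfdual and containing $\pv{Sl}$, so $\pv B \not\subseteq \pv{NOCR}$ in general. Here I would apply modularity of $\Cl L(\pv{CR})$ to the selfdual triple $(\pv A \vee \pv{Sl}, \pv B, \pv C \vee \pv{Sl})$, then intersect both sides of the resulting identity with $\pv C$. The key observation $\pv{Sl} \cap \pv C \subseteq \pv B \cap \pv C$---which holds because $\pv{Sl} \subseteq \pv B$---absorbs the extraneous $\pv{Sl}$-term that appears after the application of modularity, and a final use of modularity in $\Cl L_o(\pv{NOCR})$ yields the desired identity.
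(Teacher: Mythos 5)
Your proposal is correct, and it reaches the conclusion by a genuinely different route from the paper, although both arguments rest on the same two pillars: the dichotomy that every member of $\Cl L_o(\pv{CR})$ either contains $\pv{Sl}$ (hence is selfdual, by Theorem~\ref{t:main-above-sl}) or is contained in $\pv{NOCR}$ (Lemma~\ref{l:not-above-semilattices}), together with modularity of $\Cl L(\pv{CR})$ (Pastijn) and of $\Cl L_o(\pv{NOCR})$ (via Theorem~\ref{t:main-nocr}). The paper argues by pentagon exclusion: assuming a copy of $N_5$, it locates the vertices relative to the dichotomy and then \emph{intersects} each non-selfdual vertex with $\pv{RG}=\pv{RB}\vee\pv G$, which by the decomposition of Theorem~\ref{t:nocr} strips off the only non-selfdual coordinate (the $\pv{Sl}$-component) and yields a pentagon inside the modular lattice $\Cl L(\pv{CR})$, a contradiction; the delicate point there is checking that no side of the pentagon collapses. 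You instead verify the modular identity directly by cases, using the dual device of \emph{joining} with $\pv{Sl}$ (i.e., passing to selfdual closures) to move into $\Cl L(\pv{CR})$, and intersecting with $\pv{NOCR}$ to move into $\Cl L_o(\pv{NOCR})$. I checked the case you flag as the main obstacle ($\pv A\subseteq\pv C\subseteq\pv{NOCR}$, $\pv B\supseteq\pv{Sl}$): the computation $\pv B\cap(\pv C\vee\pv{Sl})=\pv{Sl}\vee(\pv B\cap\pv C)$ (after replacing $\pv B$ by $\pv B\cap\pv{NOCR}$) and the absorption $\pv{Sl}\cap\pv C\subseteq\pv B\cap\pv C$ do close the argument, and the remaining cases are routine instances of your two reductions. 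What your approach buys is a mechanical, self-verifying proof that makes explicit exactly which instances of modularity in the two sublattices are invoked, and it avoids the non-collapse analysis; what it costs is the case enumeration, and it should be accompanied by the (easy but necessary) remark that a union of two modular sublattices is not automatically modular, so the case-by-case verification is genuinely required. Two small points of hygiene: modularity of $\Cl L(\pv{CR})$ is due to Pastijn \cite{Pastijn:1991} rather than being directly one of Pol\'ak's results, and the bridge identities must, as you note, first be restricted to $\pv{NOCR}$ by replacing $\pv X$ with $\pv X\cap\pv{NOCR}$ before modularity of $\Cl L_o(\pv{NOCR})$ can be applied.
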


\begin{MyThm}
  \label{t:main-capB}
  The correspondence $\pv V\mapsto\pv V\cap\pv B$ defines a complete
  endomorphism of the lattice $\Cl L_o(\pv S)$.
\end{MyThm}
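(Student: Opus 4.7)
The map $\pv{V}\mapsto\pv{V}\cap\pv{B}$ trivially commutes with arbitrary meets, so the content of the theorem lies in preservation of joins: for every family $(\pv{V}_i)_{i\in I}$ in $\Cl{L}_o(\pv{S})$, we must show $(\bigvee_{i}\pv{V}_i)\cap\pv{B}=\bigvee_{i}(\pv{V}_i\cap\pv{B})$. The inclusion $\supseteq$ is immediate, so it suffices to show that every finite ordered band $B\in\bigvee_i\pv{V}_i$ lies in $\bigvee_i(\pv{V}_i\cap\pv{B})$.

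The plan is first to write $B$ as a divisor of a finite product $\prod_k S_k$ with $S_k\in\pv{V}_{i_k}$ via a subsemigroup $T\leq\prod_k S_k$ and a surjection $\phi\colon T\twoheadrightarrow B$, and then to extract from each projection $\pi_k(T)\leq S_k$ an ordered band $B_k\in\pv{V}_{i_k}\cap\pv{B}$ such that $B$ divides $\prod_k B_k$. A natural candidate for $B_k$ is the band reflection of $\pi_k(T)$ --- its largest band quotient, which automatically lies in $\pv{V}_{i_k}\cap\pv{B}$ --- and one then tries to argue that the induced map $T\to\prod_k B_k$ respects $\phi$, so that $B$ divides its image, which is a subsemigroup of $\prod_k B_k$ belonging to $\bigvee_i(\pv{V}_i\cap\pv{B})$.

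The main obstacle is verifying this last step: the kernel of the induced map $T\to\prod_k B_k$ need not a priori be contained in $\ker\phi$, as elementary examples already reveal. Overcoming it appeals to the structural results of the paper. One uses the dichotomy provided by Theorem~\ref{t:main-above-sl}: when $\pv{Sl}\subseteq\bigvee_i\pv{V}_i$, the relevant ordered band pseudovarieties are selfdual and the problem reduces to the analogous statement for unordered pseudovarieties; when $\pv{Sl}\not\subseteq\bigvee_i\pv{V}_i$, every $\pv{V}_i\cap\pv{B}$ is contained in $\pv{NB}$, and one applies the product decomposition $\Cl{L}_o(\pv{NOCR})\cong\Cl{L}_o(\pv{NB})\times\Cl{L}(\pv{G})$ of Theorem~\ref{t:main-nocr}, under which intersection with $\pv{B}$ corresponds to projection onto the first coordinate --- a complete lattice endomorphism. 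Combined, these tools allow the band reflections $B_k$ to be chosen coherently across the coordinates, yielding the factorization of $\phi$ needed to conclude that $B\in\bigvee_i(\pv{V}_i\cap\pv{B})$.
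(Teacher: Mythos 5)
Your reduction to joins and your identification of the central obstacle are both correct: the induced map $T\to\prod_k B_k$ into the band reflections generally does not factor $\phi$, and that is exactly why the statement is nontrivial. The gap is in your final paragraph, where you claim that Theorems~\ref{t:main-above-sl} and~\ref{t:main-nocr} overcome this obstacle. They cannot: both theorems live entirely inside $\Cl L_o(\pv{CR})$, whereas here the $\pv V_i$ range over all of $\Cl L_o(\pv S)$, so the semigroups $S_k$ and the subsemigroup $T\le\prod_k S_k$ are arbitrary finite ordered semigroups. Knowing how $\Cl L_o(\pv{NOCR})$ decomposes, or which pseudovarieties of ordered completely regular semigroups are selfdual, gives no control over $\ker\bigl(T\to\prod_k B_k\bigr)$ versus $\ker\phi$ and no mechanism for ``choosing the band reflections coherently.'' Your dichotomy on whether $\pv{Sl}\subseteq\bigvee_i\pv V_i$ does not rescue the first case either: reducing it to the unordered statement via selfdual closures requires something like $\widetilde{\pv V\cap\pv B}=\widetilde{\pv V}\cap\pv B$ for arbitrary $\pv V\in\Cl L_o(\pv S)$, which is essentially as hard as the theorem itself; the paper establishes it only for $\pv V\in\Cl L_o(\pv{CR})$ (Lemma~\ref{l:capB-vs-selfdual-closure}), where the structural results apply.

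The missing ingredient is the Reilly--Zhang $\pv{DCh}$ argument, which replaces band reflections (quotients of the projections) by well-chosen \emph{subsemigroups} of $T$. Lemma~\ref{l:RZ-3.10} says that if $\varphi\colon T\to S$ is onto and the $\Cl J$-order of $S$ is a chain, then some subsemigroup $R\le T$ with the same property still maps onto $S$; pushing this through your division $T\le\prod_k S_k$, $\phi\colon T\twoheadrightarrow B$ (Lemma~\ref{l:RZ-3.11}) shows that every $\pv{DCh}$-member of a join $\pv U\vee\pv V$ already lies in $\langle\pv U\cap\pv{DCh}\rangle_o\vee\langle\pv V\cap\pv{DCh}\rangle_o\subseteq(\pv U\cap\pv{CR})\vee(\pv V\cap\pv{CR})$. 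Since every pseudovariety of ordered bands is generated by its $\pv{DCh}$-members (for the eight non-selfdual ones, by suitable subsets of $\{U^+,U^-,L,R\}$), this reduces the whole problem to the assertion that intersection with $\pv B$ is a (complete) endomorphism of $\Cl L_o(\pv{CR})$ --- and only at that point do the tools you invoke (Theorems~\ref{t:Sl} and~\ref{t:nocr}, the unordered Reilly--Zhang theorem, and the selfdual-closure computation) actually do their work, via the case split on whether some $\pv V_i$ contains~$\pv{Sl}$. In short, your plan is right about where the difficulty sits and about which results close the completely regular case, but it lacks the lifting device that gets you from $\Cl L_o(\pv S)$ down to $\Cl L_o(\pv{CR})$.
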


The structure of the paper is the following.
Section~\ref{s:preliminaries} fixes notation and recalls some notions
and results from the literature. Then Theorem~\ref{t:main-above-sl} is
proved in Section~\ref{s:above-sl} while Theorem~\ref{t:main-nocr} is
proved in Section~\ref{s:nocr}. Applications, including
Theorems~\ref{t:main-modular} and~\ref{t:main-capB} are given in
Section~\ref{sec:applications}.

\section{Preliminaries and notation}\label{s:preliminaries}

We assume that the reader is familiar with the basic concepts from the
theory of semigroups. In particular, we need some results from
profinite theory of finite semigroups
(see~\cite{Almeida:1994a,Almeida:2003cshort}) for which the ordered
version was introduced in~\cite{Pin&Weil:1996b}. Besides these
theories we also recall other notions.

For a \emph{quasiorder} $\sigma$ on the set $M$, that is a reflexive
and transitive binary relation, we write also $a\mathrel{\sigma} b$ to
express $(a,b)\in\sigma$. Both infix and suffix notation are also used
for a \emph{partial order}, which is an antisymmetric quasiorder. For
a quasiorder $\sigma$, we denote by $\sigma^d$ the dual quasiorder,
that is $\sigma^d=\{(a,b) \in M\times M \mid (b,a)\in \sigma\}$. Then
$\sigma\cap \sigma^d$ is an equivalence relation on $M$, which is
denoted by $\tilde{\sigma}$; the corresponding quotient set is denoted
by $M/{\tilde{\sigma}}$ and its elements by $[a]$ for each $a\in M$.
Moreover, the quasiorder $\sigma$ naturally induces a partial order on
the quotient set $M/{\tilde{\sigma}}$, denoted by~$\le_\sigma$; it is
such that $[a] \le_\sigma [b]$ if and only if $(a,b)\in \sigma$.

A quasiorder $\le$ on a semigroup $S$ is \emph{stable} if, for every
$a,s,t\in S$, the following condition holds: if $s\le t$ then $sa\le
ta$ and $as\le at$. By an \emph{ordered semigroup} we mean a semigroup
$(S,\cdot)$ which is equipped with a stable partial order.
\emph{Homomorphisms of ordered semigroups} are homomorphisms of
semigroups which are isotone. Similar to the unordered case,
\emph{pseudovarieties of ordered semigroups} are classes of finite
ordered semigroups closed under taking homomorphic images, (ordered)
subsemigroups and finite products. For a pseudovariety $\pv V$ of
finite ordered semigroups, we denote by $\pv V^d$ the pseudovariety of
all dually ordered semigroups, i.e., $(S,\cdot,\le)\in\pv V^d$ if and
only if $(S,\cdot,\ge)\in\pv V$. We say that $\pv V$ is {\em selfdual}
if $\pv V=\pv V^d$. Such a pseudovariety of ordered semigroups is also
characterized by the property that $(S,\cdot,\le) \in\pv V$ implies
$(S,\cdot,=) \in\pv V$: the identity mapping is a homomorphism from
the ordered semigroup $(S,\cdot,=)$ onto an arbitrary ordered
semigroup $(S,\cdot,\ge)$, while the diagonal mapping embeds
$(S,\cdot,{=})$ into the product
$(S,\cdot,{\le})\times(S,\cdot,{\ge})$. If a pseudovariety of
semigroups $\pv V$ is given, then we may consider the (selfdual)
pseudovariety consisting of all ordered semigroups which are members
of $\pv V$ equipped with every possible stable partial order.
Conversely, if $\pv V$ is a selfdual pseudovariety of ordered
semigroups then we may consider the pseudovariety consisting of all
semigroups $(S,\cdot)$ such that $(S,\cdot,=) \in\pv V$. These
constructions give a one-to-one correspondence between selfdual
pseudovarieties of ordered semigroups and pseudovarieties of
semigroups. Note that usually the same symbol is used to denote both a
pseudovariety of semigroups and the corresponding selfdual
pseudovariety of ordered semigroups.

By a \emph{profinite semigroup} we mean a compact semigroup which is
residually finite as a topological semigroup, finite semigroups being
viewed as discrete topological spaces. Just as pseudovarieties (of
semigroups) are defined by sets of \emph{pseudoidentities}
\cite{Reiterman:1982}, pseudovarieties of ordered semigroups are
defined by \emph{pseudoinequalities}
\cite{Molchanov:1994,Pin&Weil:1996b}. The difference being semantic,
syntactically both pseudoidentities and pseudoinequalities are pairs
of \emph{pseudowords}. In a sense pseudowords are terms in an enriched
language where not only the semigroup operation is allowed; more
formally, pseudowords are members of a finitely generated free
profinite semigroup. Often, we only need an extra unary operation,
represented by the pseudoword $x^\omega$; in general, for an element
$s$ of a profinite semigroup, $s^\omega$ is the only idempotent in the
closed subsemigroup generated by~$s$. We also write $s^{\omega+1}$
instead of $s^\omega s$. It lies in the maximal subgroup with
idempotent $s^\omega$; its inverse in that group is denoted
$s^{\omega-1}$. For a set $\Sigma$ of pseudoidentities or
pseudoinequalities, $\op\Sigma\cl$ is the class consisting of all
finite semigroups that satisfy all elements of~$\Sigma$ in the sense
that, under arbitrary evaluation of the variables, the two sides are,
respectively, equal or in increasing order.

Completely regular semigroups may be viewed as semigroups with a unary
operation of inversion in the unique maximal subgroup containing a
given element. As such, they form a variety, which is defined by the
identities
$$x(yz)=(xy)z,
\quad xx^{-1}x=x,
\quad (x^{-1})^{-1}=x,
\quad xx^{-1}=x^{-1}x.
$$
It is common to write $s^0$ for the product $ss^{-1}$. In a completely
regular profinite semigroup, note that $s^{-1}=s^{\omega-1}$ and
$s^0=s^\omega$.

We extend the notion of ordered semigroup to unary semigroups by
requiring that not only the partial order be stable under
multiplication but also under the unary operation. Note that this is
not the common practice in the theory of ordered groups, where
inversion is not required to preserve the order. But, it is natural to
assume it as a generalization of the finite case, where inversion is
given by a power. Also, when studying varieties of ordered algebras,
it is natural to require that all basic operations preserve the order
\cite{Bloom:1976}. In particular, note that, for a stable quasiorder
$\rho$ on a completely regular semigroup the conditions
$s\mathrel{\rho}s^0$ and $s^0\mathrel{\rho}s$ are equivalent: taking
inverses in $s\mathrel{\rho}s^0$, we get $s^{-1}\mathrel{\rho}s^0$,
which, multiplying by $s$, yields $s^0\mathrel{\rho}s$; the same steps
also establish the converse.

All finite completely regular semigroups form the pseudovariety
$\pv{CR}=\op x^{\omega+1}=x\cl$ and, therefore, all finite orthogroups
form the pseudovariety $\pv{OCR}=\op x^{\omega+1}=x, x^\omega y^\omega
= (x^\omega y^\omega)^\omega\cl$. This pseudovariety contains the
pseudovariety of all finite bands $\pv B=\op x^2=x\cl$ and, in
particular, all finite semilattices $\pv {Sl}=\op x^2=x, xy=yx\cl$ and
all finite normal bands $\pv{NB}=\op x^2=x, xyzx=xzyx \cl$. The
pseudovariety $\pv{OCR}$ also contains the pseudovariety of all finite
groups $\pv G=\op x^\omega=1\cl$. We also follow the standard notation
where $\pv{\overline H}$ denotes the pseudovariety of all semigroups
whose subgroups belong to a given pseudovariety of groups $\pv H$. We
let $\pv{CR(H)}=\pv{CR}\cap \overline{\pv H}$.

Our aim is a description of all subpseudovarieties of ordered
semigroups of the pseudovariety $\pv{CR}$. By Birkhoff's basic results
on universal algebra, varieties of algebras can be uniquely determined
by fully invariant congruences on the term algebra over a countable
set of variables. A similar concept of fully invariant stable
quasiorder is possible in the case of varieties of ordered algebras.
In particular, a proof technique in~\cite{Kuril:2015} is based on
manipulating the fully invariant stable quasiorders corresponding to
the varieties of ordered bands. However, the situation in finite
universal algebra is more complicated. One needs to study relatively
free profinite semigroups over a finite set of variables and,
moreover, one needs to consider them for arbitrarily large finite sets.
We use the notation $\Om X S$ from~\cite{Almeida:1994a} for the free
profinite semigroup over a given finite set of generators $X$ in the
pseudovariety of all semigroups $\pv S$. Usually, we use the natural
number $n$ as an index in $\Om X S$ instead of $X$ when
$X=\{x_1,\dots,x_n\}$. Then the inclusion $\Om n S\subseteq \Om m S$
may be used freely. The same structures are also considered as free
profinite ordered semigroups. Therefore, we associate with a
pseudovariety $\pv V$ of finite ordered semigroups a system of
relations $(\rho_{\pv V, n})_{n}$ on the profinite
semigroups $\Om n S$ where each $\rho_{\pv V, n}$ is given by the
formula
$$\rho_{\pv V, n} %
=\{(u,v) \in \Om n S \times \Om n S \mid \pv V\models u\le v\};$$ %
note that we write $\pv V\models u\le v$ to mean that every member of~\pv
V satisfies the pseudoinequality $u\le v$. It is known that each
$\rho_{\pv V , n} $ in this system is a closed stable quasiorder. It
is also fully invariant, but this property is satisfied by the whole
system in the following sense, which we introduce for a general system
of relations.

By a {\em system of relations} $(\rho_n)_{n}$ we mean a family of
relations indexed by the positive integers such that $\rho_n$ is a
relation on $\Om n S$ for every~$n$. We say that the system
$(\rho_n)_{n}$ is \emph{fully invariant} if, for each continuous
homomorphism $\varphi : \Om n S \rightarrow \Om m S$ and $u
\mathrel{\rho_n} v$, we have $\varphi(u) \mathrel{\rho_m} \varphi(v)$.
Additionally, if every relation $\rho_n$ in this system is a closed
stable quasiorder on $\Om n S$, then we call the system
$\rho=(\rho_n)_{n}$ a {\em fully invariant system of
  closed stable quasiorders}. It is not clear whether every such
system $\rho$ determines a pseudovariety $\pv V$ such that
$\rho=(\rho_{\pv V, n})$ (see~\cite{Almeida&Klima:2017a} for a
discussion concerning a related conjecture). To explain what kind of
property is potentially missing, first denote
$\tilde{\rho}=(\widetilde{\rho_n})_{n}$, where
$\widetilde{\rho_n}$ is the equivalence relation corresponding to
$\rho_n$. Then, for each $n$ we consider $\Om n
S/{\widetilde{\rho_n}}$, which is a compact ordered semigroup, where
the partial order is $\le_{\rho_n}$. If for each $n\ge1$ the
ordered semigroup $\Om n S/{\widetilde{\rho_n}}$ is residually finite,
then $\rho=\rho_{\pv V}$, where $\pv V$ is given as the class of all
finite ordered semigroups which are finite quotients of some ordered
semigroup $(\Om n S/{\widetilde{\rho_n}},\cdot,\le_{\rho_n})$. If this
property is true then we call $\rho$ a {\em complete system of
  pseudoinequalities}. Note that, for the corresponding pseudovariety
$\pv V$ of ordered semigroups, we may also write $\pv V = \op \rho \cl$
to mean $\pv V=\op\rho_n: n\ge1\cl$. We talk about a {\em complete
  system of pseudoidentities} when $\pv V$ is a selfdual pseudovariety
of ordered semigroups. Notice also that $\pv V$ is selfdual if and
only if each $\rho_n$ is symmetric.

The question whether the ordered semigroup $\Om n
S/{\widetilde{\rho_n}}$ is residually finite seems to be difficult in
general, but it is trivial in the case when $\Om n
S/{\widetilde{\rho_n}}$ is itself finite. This simple observation is
sufficient for our considerations, because it turns out to be enough
to work with systems $\rho=(\rho_n)$ for which each equivalence
relation $\widetilde{\rho_n}$ has finite index. Note that this
condition is equivalent to the pseudovariety $\op\rho\cl$ being
locally finite.

For two fully invariant systems of closed stable quasiorders $\rho$
and $\sigma$, we write $\rho\subseteq \sigma$ if $\rho_n\subseteq
\sigma_n$ holds for every $n\ge1$. For specific pseudovarieties \pv V,
such as $\pv{CR}$, $\pv{H}$, $\pv{CR(H)}$, $\pv B$, $\pv{Sl}$ etc., we
will use the symbol $\sim_{\pv V}$ instead of $\rho_{\pv V}$. In general, if
we want to express that a pair $u,v\in\Om n S$ is $\rho_n$-related
then we may omit the index $n$ which is clear from the context and we
may simply write $u\mathrel{\rho} v$.
Note that $\rho_{\pv V}\subseteq \rho_{\pv W}$ whenever $\pv
W\subseteq \pv V$. In this way, we may write, e.g.,
${\sim_{\pv{CR}}}\subseteq{\sim_{\pv{OCR}}}\subseteq
{\sim_{\pv{B}}}\subseteq{\sim_{\pv{Sl}}}$.

We recall the definition of
the functions $0$ and $1$, which are fundamental in the theory of
completely regular semigroups. Notice that these functions may be
defined in a more general setting; however, in our contribution, we
use them just for the free profinite semigroups $\Om X S$ over the
finite set $X$.
For an arbitrary $u\in \Om XS$, we denote by $\cont(u)$ the smallest
set $A\subseteq X$ such that $u$ belongs to the closed subsemigroup of
$\Om X S$ generated by $A$. In other words, it is the set of all
letters occurring in $u$, i.e., the set of all $x\in X$ such that
there exist $u', u''\in\Om XS^1$ satisfying $u=u'xu''$; for this
reason, it is called the \emph{content} of~$u$. We extend the previous
definition of the content function $\cont$ to the case of the free
profinite monoid $\Om{X}{S}^1$ by putting $\cont (1)=\emptyset$.
Recall that $u \sim_{\pv{Sl}} v$ if and only if $\cont(u)=\cont(v)$.
Now, for every $u\in\Om X S$, there are unique $u',u''\in\Om{X}{S}^1$
and $x\in X$ such that $u=u'xu''$, $x\not\in \cont(u')$ and
$\cont(u')\cup\{x\}=\cont(u)$. The pseudoword $u'$ in this unique
factorization is denoted by $0(u)$ and $x$ in the same factorization
is denoted by $\overline 0(u)$. Note that for $u$ such that
$|\cont(u)|=1$ we have $0(u)=1$. By left to right duality we obtain
the definition of $1(u)$ and $\overline 1(u)$.
See~\cite{Almeida&Trotter:1999a} for further details.

A fundamental tool in semigroup theory is given by Green's relations
\cite{Howie:1995}. For two elements $s$ and $t$ of a semigroup, we
write $s\le_{\Cl J}t$, $s\le_{\Cl L}t$, $s\le_{\Cl R}t$ if,
respectively, the ideal, the left ideal, the right ideal generated by
$s$ is contained in that generated by~$t$. These are quasiorders on
the semigroup, whose intersection with their duals are denoted,
respectively, \Cl J, \Cl L, and \Cl R. The intersection of \Cl R and
\Cl L is denoted \Cl H. A semigroup is said to be \emph{stable} if
$s\mathrel{\Cl J}st$ implies $s\mathrel{\Cl R}st$ and $s\mathrel{\Cl
  J}ts$ implies $s\mathrel{\Cl L}ts$. It is well known that, in a
stable semigroup, the equivalence relation \Cl J is the join of \Cl R
and \Cl L; moreover, the main semigroups of interest in this paper,
namely profinite semigroups and completely regular semigroups, are
stable \cite{Koch&Wallace:1957}.

Given a congruence $\theta$ on a semigroup $S$ and one of Green's
relations \Cl K, we say that the elements $s$ and $t$ of~$S$ are \Cl
K-equivalent modulo $\theta$ if their $\theta$-classes are \Cl
K-equivalent in the quotient semigroup $S/\theta$. It is well known
that, if \pv V is a pseudovariety between \pv{Sl} and \pv{CR}, two
pseudowords $u,v\in\Om XS$ are \Cl J-equivalent modulo~\pv V if and
only if $\cont(u)=\cont(v)$ (see \cite[Theorem~8.1.7]{Almeida:1994a});
it follows that, in this case, we have $u\mathrel{\Cl R}v$ modulo~\pv
V if $0(u)=0(v)$, and $u\mathrel{\Cl L}v$ modulo~\pv V if $1(u)=1(v)$.

The following is an easy consequence
of~\cite[Proposition~2.2]{Pastijn&Trotter:1988} which gives the
analogous result for varieties.
  
\begin{Prop}
  \label{p:locally-finite-H}
  Let $\pv H$ be a locally finite pseudovariety of groups. Then
  $\pv{CR(H)}$ is a locally finite pseudovariety of semigroups.
\end{Prop}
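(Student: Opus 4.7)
The plan is to reduce the statement to the variety analogue by Pastijn and Trotter~\cite[Proposition~2.2]{Pastijn&Trotter:1988}, bridging through the variety $\mathbf H$ of groups generated by the pseudovariety~$\pv H$. Concretely, it suffices to show that, for every finite set~$X$, the relatively free profinite semigroup $\Om X{CR(H)}$ is finite.

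The key bridging step is to establish that $\mathbf H$ is itself a locally finite variety of groups and that $\mathbf H^{\mathrm{fin}}=\pv H$; this is the only delicate point, and everything else follows cleanly. Indeed, since $\pv H$ is locally finite, the free pro-$\pv H$ group $\Om nH$ is finite for each~$n$. As the $n$-variable identities holding throughout $\pv H$ coincide with those of $\mathbf H$, and $\Om nH$ satisfies precisely those identities, $\Om nH$ serves as the free object on $n$ generators in~$\mathbf H$. Hence $\mathbf H$ is locally finite, and every finite $G\in\mathbf H$ is a quotient of some $\Om nH\in\pv H$, so $G\in\pv H$ by closure of $\pv H$ under homomorphic images.

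Pastijn--Trotter now gives that the variety $\mathbf{CR}(\mathbf H)$ of completely regular semigroups whose maximal subgroups lie in~$\mathbf H$ is locally finite. Let $F_X$ be its free object on a finite set~$X$. Then $F_X$ is finite, and its maximal subgroups are finite members of~$\mathbf H$, hence of~$\pv H$, so $F_X\in\pv{CR(H)}$. For any $S\in\pv{CR(H)}\subseteq\mathbf{CR}(\mathbf H)$, freeness of $F_X$ in $\mathbf{CR}(\mathbf H)$ provides the unique extension of each map $X\to S$ to a homomorphism $F_X\to S$, and continuity is automatic since $F_X$ is finite. Thus $F_X$ realizes the universal property of $\Om X{CR(H)}$, so $\Om X{CR(H)}\cong F_X$ is finite and $\pv{CR(H)}$ is locally finite.
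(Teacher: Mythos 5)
Your argument is correct and follows exactly the route the paper intends: the paper offers no proof beyond the remark that the statement is an easy consequence of the variety-level result of Pastijn and Trotter, and your write-up supplies the standard bridging details (the variety generated by a locally finite pseudovariety of groups is locally finite with the same finite members, and the finite relatively free object of the variety $\mathbf{CR}(\mathbf{H})$ realizes the universal property of $\Om X{CR(H)}$). No gaps; the only implicit point worth keeping in mind is that for finite completely regular semigroups the unary operation is a power of the element, so semigroup homomorphisms automatically respect it and the two universal properties match.
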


It is well known that every pseudovariety is the directed union of its
subpseudovarieties generated by a single semigroup, which are locally
finite. For this purpose, it suffices to consider a countable sequence
of all (up to isomorphism) finite members of the pseudovariety and
then for each natural number $n$ to take the pseudovariety generated
by the direct product of the first $n$ semigroups in the sequence.
Such an expression of $\pv G$ is used later.

\section{Pseudovarieties above semilattices}
\label{s:above-sl}

In this section, we prove that above the pseudovariety $\pv{Sl}$ all
pseudovarieties of ordered completely regular semigroups are selfdual.

So, let $\pv V$ be a given pseudovariety of ordered completely regular
semigroups with the corresponding fully invariant system of closed
stable quasiorders $\rho_{\pv V}$. Our goal is to show that all
relations $(\rho_{\pv V} )_n$ are symmetric, which is equivalent with
the selfduality of $\pv V$.

Let $\pv H=\pv V\cap \pv G$. Thus, we assume that ${\sim_{\pv{CR(H)}}}
\subseteq \rho_{\pv V} \subseteq {\sim_{\pv{Sl}}}$. Although some of
the following results are satisfied for an arbitrary pseudovariety
$\pv V$, we further assume that $\pv H=\pv V\cap \pv G$ is locally
finite. Hence, $\pv V \subseteq \pv{CR(H)}$ is also locally finite by
Proposition~\ref{p:locally-finite-H}.

Following the basic idea of the paper~\cite{Kuril:2015}, for a
complete system of pseudo\-inequalities $\rho$, we consider the system
of relations $\rho^0=(\rho^0_n)_{n}$ where each $\rho^0_n$ is
defined as follows. On the set $\Om n S$ we let $u
\mathrel{\rho^0_n} v$ if there are pseudowords $u', v'\in \Om{n+1}S$
such that $u'\mathrel{\rho_{n+1}} v'$, $0(u')=u$ and $0(v')=v$.
Furthermore, we let $\rho^{0\cont}=(\rho^{0\cont}_n)_{n}$ where each
$\rho^{0\cont}_n$ is the intersection $\rho^0_n\cap
(\sim_{\pv{Sl}})_n$. Dually, we define $\rho^1$ and $\rho^{1\cont}$
if we consider the function $1$ instead of $0$.

\begin{Lemma}
  \label{l:rho0}
  Let $\rho$ be a complete system of pseudoinequalities such that
  $\rho\subseteq{\sim_{\pv{Sl}}}$ and let $u,v\in\Om XS$ be
  pseudowords such that $\cont(u)=\cont(v)$. If\/
  $0(u)\mathrel{\rho^0}0(v)$, then there exist $x,x'\in X$ and
  $s,s'\in\Om XS^1$ such that, for $u'=0(u)xs$ and $v'=0(v)x's'$, the
  following properties hold:
  \begin{align}
    & u' \mathrel{\rho} v',
      \quad 0(u')=0(u),
      \quad 0(v')=0(v)\,
      \label{e:zero-and-rho} \\
    & \cont(u)=\cont(u')=\cont(v'), 
      \quad \overline 0(u)=x,
      \quad \overline 0(v)=x' \,.
      \label{e:content-vs-zero}
  \end{align}
\end{Lemma}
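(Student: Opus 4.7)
My plan is to unfold the definition of $\rho^0$ and then normalise the resulting witnesses by a single continuous substitution. The hypothesis $0(u)\mathrel{\rho^0}0(v)$ provides pseudowords $p,q\in\Om{X\cup\{y\}}S$, with $y$ a fresh generator, such that $p\mathrel{\rho} q$, $0(p)=0(u)$ and $0(q)=0(v)$. Setting $x=\overline 0(u)$, $x'=\overline 0(v)$ and $A=\cont(u)=\cont(v)$, I factor $p=0(u)\cdot\overline 0(p)\cdot p_0$ and $q=0(v)\cdot\overline 0(q)\cdot q_0$ with $p_0,q_0\in\Om{X\cup\{y\}}S^1$.

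Since $\rho\subseteq{\sim_{\pv{Sl}}}$ forces $\cont(p)=\cont(q)$, I would combine this with the identities $\cont(p)=(A\setminus\{x\})\cup\{\overline 0(p)\}$, $\cont(q)=(A\setminus\{x'\})\cup\{\overline 0(q)\}$, together with the exclusions $\overline 0(p)\notin A\setminus\{x\}$ and $\overline 0(q)\notin A\setminus\{x'\}$, to derive the following dichotomy: either $x=x'$ and $\overline 0(p)=\overline 0(q)\in\{x,y\}\cup(X\setminus A)$, or $x\ne x'$ and necessarily $\overline 0(p)=x$, $\overline 0(q)=x'$ with $\cont(p)=\cont(q)=A$.

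Having identified the possible shapes, I introduce the continuous substitution $\varphi\colon\Om{X\cup\{y\}}S\to\Om XS$ determined on generators by $\varphi(a)=a$ for $a\in A$ and $\varphi(a)=x$ for $a\in(X\setminus A)\cup\{y\}$. In both branches of the dichotomy one checks that $\varphi(\overline 0(p))=x$ and $\varphi(\overline 0(q))=x'$: when $x=x'$ every admissible value of $\overline 0(p)=\overline 0(q)$ is mapped to $x=x'$; when $x\ne x'$ both $x$ and $x'$ lie in $A$ and are fixed. Since $\varphi$ also fixes $0(u)$ and $0(v)$ (whose contents lie in $A$) and maps every letter into $A$, one obtains $u'=\varphi(p)=0(u)\cdot x\cdot s$ and $v'=\varphi(q)=0(v)\cdot x'\cdot s'$ with $s=\varphi(p_0)$, $s'=\varphi(q_0)\in\Om XS^1$ of content in $A$; full invariance of $\rho$ yields $u'\mathrel\rho v'$, and reading off the $0$-prefix from the factored form gives $0(u')=0(u)$ and $0(v')=0(v)$, so that both \eqref{e:zero-and-rho} and \eqref{e:content-vs-zero} follow.

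The main obstacle is to find a single $\varphi$ that simultaneously normalises $p$ and $q$; the dichotomy makes this possible because the asymmetric case $x\ne x'$ already forces $p,q$ to have the desired factorisation, while in the symmetric case $x=x'$ the common value of $\overline 0(p)=\overline 0(q)$ can be redirected uniformly to $x=x'$.
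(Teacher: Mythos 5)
Your proof is correct and follows essentially the same route as the paper's: unfold the definition of $\rho^0$ to get witnesses, use $\rho\subseteq{\sim_{\pv{Sl}}}$ to compare contents and split into the cases $x=x'$ and $x\ne x'$ (equivalently, $\cont(0(u))=\cont(0(v))$ or not), and invoke full invariance to rename the distinguished letter in the first case. The only cosmetic difference is that you package the renaming as a single substitution $\varphi$ applied uniformly in both branches, whereas the paper performs it only where needed.
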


\begin{proof}
  From the assumption that $0(u)\mathrel{\rho^0}0(v)$ we know that
  there exist $x,x'\in X$ and $s,s'\in\Om XS^1$ such that, for
  $u'=0(u)xs$ and $v'=0(u)x's'$, the conditions
  in~\eqref{e:zero-and-rho} hold. We distinguish two cases. At first,
  we assume that $\cont(0(u))=\cont(0(v))$. Then $x$ is equal to $x'$
  because $\rho\subseteq {\sim}_{\pv{Sl}}$. Since $\rho$ is fully
  invariant and $\cont(u)=\cont(v)$, we may assume that $x$ belongs to
  $\cont(u)\setminus\cont(0(u))$. Consequently, the conditions
  in~\eqref{e:content-vs-zero} hold. Secondly, we assume that
  $\cont(0(u))\not=\cont(0(v))$. Then $x$ occurs in $v'$ and,
  therefore, in $0(v)$. In a similar way we get that $x'$ occurs in
  $0(u)$. Since $\cont(u')=\cont(v')$, we deduce
  \eqref{e:content-vs-zero} again.\qed
\end{proof}

The following result relates $\rho^0$ and $\rho^1$ with the
restriction of $\rho$ to the set of idempotents, the so-called trace
of~$\rho$.

\begin{Lemma}
  \label{l:trace}
  Let $\rho$ be a complete system of pseudoinequalities such that
  ${\sim_{\pv{CR}}}\subseteq\rho\subseteq{\sim_{\pv{Sl}}}$ and let
  $u,v\in\Om XS$ be arbitrary pseudowords. Then
  $u^\omega\mathrel{\rho}v^\omega$ if and only if
  \begin{equation}
    \label{eq:trace}
    0(u)\mathrel{\rho^0} 0(v), \quad
    1(u) \mathrel{\rho^1} 1(v),\quad
    \text{and} \quad u\sim_{\pv{Sl}}v\,.
  \end{equation}
\end{Lemma}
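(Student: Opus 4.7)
The plan is to tackle the two implications separately, exploiting that $\rho$ is a closed stable quasiorder with ${\sim_{\pv{CR}}}\subseteq\rho\subseteq{\sim_{\pv{Sl}}}$.

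For the implication $u^\omega\mathrel{\rho}v^\omega \Rightarrow$ \eqref{eq:trace}, the content equality $u\sim_{\pv{Sl}}v$ is immediate since $\rho\subseteq{\sim_{\pv{Sl}}}$ and $\cont(u)=\cont(u^\omega)$. For $0(u)\mathrel{\rho^0}0(v)$, the idea is to right-multiply $u^\omega\mathrel{\rho}v^\omega$ by $u$, yielding $u^\omega u\mathrel{\rho}v^\omega u$, and then to replace $u^\omega u$ by $u$ using $u^\omega u\sim_{\pv{CR}}u$ and ${\sim_{\pv{CR}}}\subseteq\rho$, arriving at $u\mathrel{\rho}v^\omega u$. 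Reading $v^\omega u$ from the left, its content is completed at $\overline 0(v)$ already inside the first copy of $v$, so $0(v^\omega u)=0(v)$; thus $(u,v^\omega u)$ is a witnessing pair for $0(u)\mathrel{\rho^0}0(v)$ (viewed in $\Om{n+1}{S}$ via the inclusion). By left--right duality, $(u,uv^\omega)$ witnesses $1(u)\mathrel{\rho^1}1(v)$.

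For the converse, I would apply Lemma~\ref{l:rho0} to $0(u)\mathrel{\rho^0}0(v)$ to obtain $u'=0(u)\,\overline 0(u)\,s$ and $v'=0(v)\,\overline 0(v)\,s'$ of content $X=\cont(u)$ with $u'\mathrel{\rho}v'$, and use its left--right dual for $1$ to produce $\tilde u=t\,\overline 1(u)\,1(u)$, $\tilde v=t'\,\overline 1(v)\,1(v)$ of content $X$ with $\tilde u\mathrel{\rho}\tilde v$. Setting $w=u'\tilde u$ and $w'=v'\tilde v$, two uses of stability give $w\mathrel{\rho}w'$, and a direct inspection of the first and last content-completing letters yields $0(w)=0(u)$, $1(w)=1(u)$, $\cont(w)=X$, and analogously for $w'$. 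Since $\rho$ is a closed stable quasiorder, $w\mathrel{\rho}w'$ lifts to $w^n\mathrel{\rho}(w')^n$ for all $n$, and hence $w^\omega\mathrel{\rho}(w')^\omega$ by closure. The description of Green's relations recalled before Proposition~\ref{p:locally-finite-H} then tells us that in $\Om X{CR}$ two idempotents of content $X$ coincide as soon as they agree on $0$ and $1$, so $w^\omega\sim_{\pv{CR}}u^\omega$ and $(w')^\omega\sim_{\pv{CR}}v^\omega$; chaining through ${\sim_{\pv{CR}}}\subseteq\rho$ delivers $u^\omega\mathrel{\rho}v^\omega$.

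The only non-formal step, and the expected main obstacle, is the structural identification $w^\omega\sim_{\pv{CR}}u^\omega$. It rests on the fact that in $\Om X{CR}$ an idempotent is determined by its content together with its $(0,1)$ data---equivalently, that two idempotents in the same $\Cl H$-class are equal. Everything else is careful bookkeeping: choosing witnesses whose leftmost and rightmost new letters match $\overline 0(u),\overline 1(u)$ and $\overline 0(v),\overline 1(v)$, concatenating them to produce $w$ and $w'$ with the correct $(0,1)$ profile, and then propagating through the stability and topological closure of $\rho$.
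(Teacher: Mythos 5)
Your proof is correct and follows essentially the same route as the paper's: the converse direction concatenates the witnesses from Lemma~\ref{l:rho0} and its dual, passes to the $\omega$-power via stability and closedness, and identifies the resulting idempotents with $u^\omega$ and $v^\omega$ modulo $\sim_{\pv{CR}}$ using the description of \Cl J- and \Cl H-equivalence by content and $(0,1)$ data, exactly as in the paper. The only (immaterial) difference is in the forward direction, where the paper simply notes that the pair $(u^\omega,v^\omega)$ itself witnesses $0(u)\mathrel{\rho^0}0(v)$ and $1(u)\mathrel{\rho^1}1(v)$ because $\cont$, $0$ and $1$ are invariant under $w\mapsto w^\omega$, rather than first multiplying by $u$.
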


\begin{proof}
  Suppose first that $u^\omega\mathrel{\rho}v^\omega$. Since, in
  general, $w$ and $w^\omega=ww^{\omega-1}=w^{\omega-1}w$ have the
  same values for each of the functions $c$, $0$, and~$1$, and
  $\rho\subseteq{\sim_{\pv{Sl}}}$, we obtain the conditions
  in~\eqref{eq:trace}.

  Conversely, suppose that~\eqref{eq:trace} holds. Let $u',v'\in\Om
  XS$ be given by Lemma~\ref{l:rho0} and let $u'',v''\in\Om XS$ be
  given by the dual version of Lemma~\ref{l:rho0}, concerning
  $\rho^1$, so that there are $y,y'\in X$ and $t,t'\in\Om XS^1$ such
  that $u''=ty1(u)$, $v''=t'y'1(v)$, and
  \begin{align}
    & u'' \mathrel{\rho} v'',
      \quad 1(u'')=1(u),
      \quad 1(v'')=1(v)
      \label{e:one-and-rho} \\
    & \cont(u)=\cont(u'')=\cont(v''),
      \quad \overline 1(u)=y,
      \quad \overline 1(v)=y'\,. 
    \label{e:content-vs-one}
  \end{align}
  From the first conditions in~\eqref{e:zero-and-rho}
  and~\eqref{e:one-and-rho}, we deduce that
  \begin{equation}
    \label{eq:trace-idempotents}
    (u'u'')^\omega\mathrel{\rho}(v'v'')^\omega.
  \end{equation}
  But, by~\eqref{e:content-vs-zero} and~\eqref{e:content-vs-one}, the
  idempotents $(u'u'')^\omega$ and $u^\omega$ have the same content
  and, therefore, they are $\mathcal{J}$-equivalent modulo
  $\sim_{\pv{CR}}$. By~\eqref{e:zero-and-rho}
  and~\eqref{e:one-and-rho}, they also have the same values under each
  of the functions $0$ and~$1$, which entails that they are
  $\mathcal{H}$-equivalent modulo $\sim_{\pv{CR}}$, whence they are in
  the same $\sim_{\pv{CR}}$-class. For the same reason, we also have
  $(v'v'')^\omega\sim_{\pv{CR}}v^\omega$. Combining these observations
  with~\eqref{eq:trace-idempotents}, we obtain the required relation
  $u^\omega\mathrel{\rho}v^\omega$ since
  ${\sim_{\pv{CR}}}\subseteq\rho$.\qed
\end{proof}

The following lemma enables us to prove that the relations $\rho_n$
are symmetric by a certain inductive argument using the symmetry of
both $\rho^0_n$ and $\rho^1_n$. It is the order analog for finite
semigroups of a key and well known result in the theory of completely
regular semigroups \cite[Lemma~VI.3.1]{Petrich&Reilly:1999}.

\begin{Lemma}
  \label{l:trace-kernel}
  Let $S$ be a completely regular semigroup and let $\rho$ be a stable
  quasiorder on~$S$. Then, the following conditions are equivalent for
  arbitrary \Cl J-equivalent elements $s,t\in S$:
  \begin{enumerate}[(i)]
  \item\label{item:trace-kernel-1} $s\mathrel{\rho}t$;
  \item\label{item:trace-kernel-2} $s^0\mathrel{\rho}t^0$ and
    $st^{-1}\mathrel{\rho}(st^{-1})^0$;
  \item\label{item:trace-kernel-3} $s^0\mathrel{\rho}t^0$ and
    $t^{-1}s\mathrel{\rho}(t^{-1}s)^0$;
  \item\label{item:trace-kernel-4} $s^0\mathrel{\rho}t^0$ and
    $s^{-1}t\mathrel{\rho}(s^{-1}t)^0$;
  \item\label{item:trace-kernel-5} $s^0\mathrel{\rho}t^0$ and
    $ts^{-1}\mathrel{\rho}(ts^{-1})^0$.
  \end{enumerate}
\end{Lemma}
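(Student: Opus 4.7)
The plan is to prove the central equivalence $(i) \Leftrightarrow (ii)$ in detail; the remaining equivalences $(i) \Leftrightarrow (iii)$, $(i) \Leftrightarrow (iv)$, $(i) \Leftrightarrow (v)$ follow by the same template, adapted to the relevant product ($t^{-1}s$, $s^{-1}t$, or $ts^{-1}$ in place of $st^{-1}$) by choosing the appropriate sides for the one-sided multiplications used below. The essential hypothesis is that, in the unary-semigroup setting adopted in the paper, $\rho$ is compatible with both multiplication and inversion, so $s \mathrel{\rho} t$ implies $s^{-1} \mathrel{\rho} t^{-1}$ and hence $s^0 \mathrel{\rho} t^0$. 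A standing structural fact used repeatedly is that every $\Cl D$-class of a completely regular semigroup is a completely simple subsemigroup, so all the elements arising in the computation --- $s, t, s^{-1}, t^{-1}, st^{-1}, (st^{-1})^0$, and so on --- live in a single completely simple subsemigroup in which the Rees matrix structure makes the relevant identities transparent.

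For $(i) \Rightarrow (ii)$: From $s \mathrel{\rho} t$ and the derived $s^{-1} \mathrel{\rho} t^{-1}$, right-multiplying the first by $s^{-1}$ and left-multiplying the second by $t$ yield $s^0 \mathrel{\rho} ts^{-1} \mathrel{\rho} t^0$, giving $s^0 \mathrel{\rho} t^0$ by transitivity. For the kernel part, left-multiplying $s^{-1} \mathrel{\rho} t^{-1}$ by $s$ gives $s^0 \mathrel{\rho} st^{-1}$; applying inversion yields $s^0 \mathrel{\rho} (st^{-1})^{-1}$; right-multiplying by $st^{-1}$ produces $s^0 \cdot st^{-1} \mathrel{\rho} (st^{-1})^{-1} \cdot st^{-1}$, which simplifies (using $s^0 \cdot st^{-1} = st^{-1}$ because $s^0$ is a left identity on the right ideal $st^{-1}S^1$, and $(st^{-1})^{-1} \cdot st^{-1} = (st^{-1})^0$ from the group structure of the $\Cl H$-class of $st^{-1}$) to $st^{-1} \mathrel{\rho} (st^{-1})^0$.

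For $(ii) \Rightarrow (i)$: The pivotal auxiliary step is $(st^{-1})^0 \mathrel{\rho} t^0$, obtained by right-multiplying $s^0 \mathrel{\rho} t^0$ by $(st^{-1})^0$ and simplifying: on the left-hand side, $s^0(st^{-1})^0 = (st^{-1})^0$ because $s^0$ is a left identity for every element of the right ideal $st^{-1}S^1$, which contains $(st^{-1})^0$; on the right-hand side, $t^0(st^{-1})^0 = t^0$ because $t^0$ and $(st^{-1})^0$ are $\Cl L$-equivalent idempotents of the common $\Cl D$-class of $s$ and $t$, and for any two $\Cl L$-equivalent idempotents $e, f$ one has $ef = e$. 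Right-multiplying $(st^{-1})^0 \mathrel{\rho} t^0$ by $t$ then gives $(st^{-1})^0 t \mathrel{\rho} t$, and combining this with the chain $s \mathrel{\rho} st^0 \mathrel{\rho} (st^{-1})^0 t$ --- the first link obtained by left-multiplying $s^0 \mathrel{\rho} t^0$ by $s$, the second by right-multiplying $st^{-1} \mathrel{\rho} (st^{-1})^0$ by $t$ --- transitivity closes the argument to give $s \mathrel{\rho} t$. The main obstacle is the careful bookkeeping of which idempotent acts as a left or right identity on which element; none of it is deep, but everything hinges on the preliminary observation that the entire argument takes place inside the single completely simple subsemigroup that is the common $\Cl D$-class of $s$ and $t$.
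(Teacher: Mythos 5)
Your proof is correct and follows essentially the same route as the paper's: both directions are elementary chains of stable-quasiorder manipulations (using compatibility with inversion) combined with the Green's-relations identities for idempotents inside the common completely simple $\Cl D$-class, and like the paper you treat only (i)$\Leftrightarrow$(ii) and leave the other variants to symmetry. The only cosmetic difference is that in the converse the paper uses the equality $t^0(st^{-1})^0t=t$ directly in a single chain, whereas you first derive the auxiliary relation $(st^{-1})^0\mathrel{\rho}t^0$; both are sound.
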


\begin{proof}
  We establish only the equivalence
  (\ref{item:trace-kernel-1})~$\Leftrightarrow$~(\ref{item:trace-kernel-2})
  as the equivalence of~(\ref{item:trace-kernel-1}) with each of the
  remaining conditions may be proved similarly.
  
  Suppose first that $s\mathrel{\rho}t$. Since $\rho$ is stable, it
  follows that $s^0\mathrel{\rho} t^0$. On the other hand, multiplying
  on the right $s\mathrel{\rho}t$ by $t^{-1}$, we obtain
  $st^{-1}\mathrel{\rho}t^0$. Further multiplying on the left by
  $(st^{-1})^0$, we deduce that $st^{-1}\mathrel{\rho}(st^{-1})^0$.

  For the converse, assuming~(\ref{item:trace-kernel-2}), note that
  $s=s^0ss^0 \mathrel{\rho}t^0 st^0 =t^0 st^{-1}t
  \mathrel{\rho}t^0(st^{-1})^0 t$. Since $t^0$ and $(st^{-1})^0$ are
  \Cl J-equivalent idempotents that admit as a common suffix the
  element $t$ of the same \Cl J-class, by stability of $S$, they are
  $\mathcal{L}$-equivalent modulo $\sim_{\pv{CR}}$. Hence, we have
  $t^0(st^{-1})^0 t=t^0t=t$ and, therefore, $s\mathrel{\rho}t$.\qed
\end{proof}

\begin{Lemma}
  \label{l:characterizing-rho}
  Let $\rho$ be a complete system of pseudoinequalities such that
  ${\sim_{\pv{CR}}} \subseteq\rho \subseteq {\sim_{\pv{Sl}}}$ and let
  $u,v\in\Om X S$ be arbitrary pseudowords. The following conditions
  are equivalent:
  \begin{enumerate}[(i)]
  \item  $u\mathrel{\rho} v$;
  \item $0(u)\mathrel{\rho^0} 0(v)$, $1(u) \mathrel{\rho^1} 1(v)$,
    $u\sim_{\pv{Sl}}v$, and $uv^{\omega-1}\mathrel{\rho}
    (uv^{\omega-1})^\omega$;
  \item $0(u)\mathrel{\rho^0} 0(v)$, $1(u) \mathrel{\rho^1} 1(v)$,
    $u\sim_{\pv{Sl}}v$, and $vu^{\omega-1}\mathrel{\rho}
    (vu^{\omega-1})^\omega$.
  \end{enumerate}
\end{Lemma}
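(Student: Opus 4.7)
The proof is the ordered analogue of the decomposition of $\Cl H$-equivalence into trace plus kernel: Lemma~\ref{l:trace} reduces the idempotent part of the $\rho$-comparability to the clauses on $0$, $1$, and $\sim_{\pv{Sl}}$, while Lemma~\ref{l:trace-kernel} captures the kernel part through the auxiliary relation $uv^{\omega-1}\mathrel\rho(uv^{\omega-1})^\omega$ for~(ii), or $vu^{\omega-1}\mathrel\rho(vu^{\omega-1})^\omega$ for~(iii). Since Lemma~\ref{l:trace-kernel} is formulated for an arbitrary completely regular semigroup, I will apply it pointwise in each finite $(S,\le)\in\pv V$ through a valuation $\varphi\colon\Om XS\to S$; completeness of $\rho$ allows one to switch freely between $u\mathrel\rho v$ and the assertion that $\varphi(u)\le\varphi(v)$ for every such~$\varphi$. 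The argument for (i)$\Leftrightarrow$(iii) is word for word the same as that for (i)$\Leftrightarrow$(ii), using the equivalence (\ref{item:trace-kernel-1})$\Leftrightarrow$(\ref{item:trace-kernel-5}) of Lemma~\ref{l:trace-kernel} in place of (\ref{item:trace-kernel-1})$\Leftrightarrow$(\ref{item:trace-kernel-2}), so I describe only the latter.

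For (i)$\Rightarrow$(ii), the condition $u\sim_{\pv{Sl}}v$ is immediate from $\rho\subseteq{\sim_{\pv{Sl}}}$. Because $\pv V$ consists of ordered \emph{unary} CR semigroups, $\rho$ is compatible with the $\omega$-power, so $u\mathrel\rho v$ implies $u^\omega\mathrel\rho v^\omega$; Lemma~\ref{l:trace} then extracts the clauses on $\rho^0$ and $\rho^1$. For the fourth clause, fix a valuation $\varphi\colon\Om XS\to S$ with $(S,\le)\in\pv V$, and set $s=\varphi(u)$, $t=\varphi(v)$. Since $\cont(u)=\cont(v)$, \cite[Theorem~8.1.7]{Almeida:1994a} gives $s\mathrel{\Cl J}t$ in $S$, so Lemma~\ref{l:trace-kernel}, implication (\ref{item:trace-kernel-1})$\Rightarrow$(\ref{item:trace-kernel-2}), converts $s\le t$ into $st^{-1}\le(st^{-1})^0$, i.e., $\varphi(uv^{\omega-1})\le\varphi((uv^{\omega-1})^\omega)$. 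Varying $\varphi$ and $S$ yields $uv^{\omega-1}\mathrel\rho(uv^{\omega-1})^\omega$.

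For (ii)$\Rightarrow$(i), Lemma~\ref{l:trace} applied to the first three clauses gives $u^\omega\mathrel\rho v^\omega$, hence $s^0\le t^0$ for every valuation. Combined with $st^{-1}\le(st^{-1})^0$ from the fourth clause, and with the $\Cl J$-equivalence $s\mathrel{\Cl J}t$ established as before, the implication (\ref{item:trace-kernel-2})$\Rightarrow$(\ref{item:trace-kernel-1}) of Lemma~\ref{l:trace-kernel} supplies $s\le t$, and varying $\varphi$ concludes $u\mathrel\rho v$. The main technical care lies in the bridge between the abstract quasiorder $\rho$ on $\Om n S$, which is not completely regular, and the stable orders on members of $\pv V$, which are: it is traversed through valuations, provided one checks that $\rho$ is compatible with the unary operation of $\pv V$, so that $u$, $u^\omega$, and $u^{\omega-1}$ are handled uniformly.
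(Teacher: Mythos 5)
Your proposal is correct and takes essentially the same approach as the paper, whose proof simply records that the statement is an immediate consequence of Lemmas~\ref{l:trace} and~\ref{l:trace-kernel}. The only (immaterial) difference is how you bridge Lemma~\ref{l:trace-kernel} to $\Om XS$: you do it pointwise via valuations into finite members of $\op\rho\cl$ (legitimate, since completeness of $\rho$ gives $u\mathrel{\rho}v$ iff $\varphi(u)\le\varphi(v)$ for all such valuations), whereas one may equally apply the lemma once to the completely regular profinite quotient $\Om XS/{\widetilde{\sim_{\pv{CR}}}}$, on which $\rho$ induces a stable quasiorder because ${\sim_{\pv{CR}}}\subseteq\rho$.
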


\begin{proof}
  The result is an immediate consequence of
  Lemmas~\ref{l:trace} and~\ref{l:trace-kernel}.\qed
\end{proof}

The next lemma describes basic properties of the relation
$\rho^{0\cont}$ which is equal to $\rho^0$ under the asumption that
$\rho^0\subseteq{\sim}_{\pv{Sl}}$. It enables an induction argument in
the main statement of this section.

\begin{Lemma}
  \label{l:rho-0-c-is-ficsq}
  Let $\pv H$ be a locally finite pseudovariety of groups and $\rho$
  be a complete system of pseudoinequalities such that
  ${\sim}_{\pv{CR(H)}} \subseteq\rho \subseteq {\sim}_{\pv{Sl}}$. Then
  $\rho^{0\cont}$ is a complete system of pseudoinequalities which
  contains $\rho$.
\end{Lemma}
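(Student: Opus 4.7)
My plan is to handle the two claims in the lemma separately: first the containment $\rho \subseteq \rho^{0\cont}$, then the verification that $\rho^{0\cont}$ is a complete system of pseudoinequalities. The key tool will be a \emph{normalization}: for $u, v \in \Om{n}{S}$ with $u \sim_{\pv{Sl}} v$, I claim $u \mathrel{\rho^0_n} v$ if and only if there exist $r, s \in \Om{n+1}{S}^1$ such that $u x_{n+1} r \mathrel{\rho_{n+1}} v x_{n+1} s$. One direction is immediate by applying $0$; the other follows from Lemma~\ref{l:rho0} applied to $u x_{n+1}$ and $v x_{n+1}$, which have equal content and satisfy $0(u x_{n+1}) = u$, $0(v x_{n+1}) = v$; since $x_{n+1}$ is the only letter outside the common content of $u$ and $v$, Lemma~\ref{l:rho0} forces the letters $x, x'$ selected there to both equal $x_{n+1}$. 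The containment $\rho \subseteq \rho^{0\cont}$ then follows at once: if $u \mathrel{\rho_n} v$ then $u \sim_{\pv{Sl}} v$, and viewing $\Om{n}{S} \subseteq \Om{n+1}{S}$ yields $u \mathrel{\rho_{n+1}} v$ by full invariance, whence stability gives $u x_{n+1} \mathrel{\rho_{n+1}} v x_{n+1}$.

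For the verification of the remaining axioms, reflexivity is immediate, and stability follows by applying to a normalized witness the continuous endomorphism of $\Om{n+1}{S}$ that fixes $x_1, \dots, x_n$ and sends $x_{n+1}$ to $b x_{n+1}$, then multiplying on the left by $a$. Transitivity is the delicate step: from normalized witnesses $u x_{n+1} r_1 \mathrel{\rho_{n+1}} v x_{n+1} s_1$ and $v x_{n+1} r_2 \mathrel{\rho_{n+1}} w x_{n+1} s_2$, I apply the endomorphism $\psi$ of $\Om{n+1}{S}$ defined by $\psi(x_{n+1}) = x_{n+1} r_2$ (fixing the other generators) to the first relation, obtaining $u x_{n+1} r_2 \psi(r_1) \mathrel{\rho_{n+1}} v x_{n+1} r_2 \psi(s_1)$; stability of $\rho_{n+1}$ then chains this through the second relation via the intermediate word $v x_{n+1} r_2 \psi(s_1)$ to yield $u x_{n+1} r_2 \psi(r_1) \mathrel{\rho_{n+1}} w x_{n+1} s_2 \psi(s_1)$. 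Full invariance of the whole system is obtained by extending $\varphi : \Om{n}{S} \to \Om{m}{S}$ to $\tilde\varphi : \Om{n+1}{S} \to \Om{m+1}{S}$ with $\tilde\varphi(x_{n+1}) = x_{m+1}$ and appealing to full invariance of $\rho$. For closedness, the normalized form exhibits $\rho^{0\cont}_n$ as the intersection of ${\sim_{\pv{Sl}}}$ with the image, under projection onto the first two coordinates, of the compact set $\{(u, v, r, s) \in \Om{n}{S}^2 \times (\Om{n+1}{S}^1)^2 : u x_{n+1} r \mathrel{\rho_{n+1}} v x_{n+1} s\}$; continuous images of compact sets are compact, hence closed, so $\rho^{0\cont}_n$ is closed. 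Finally, residual finiteness of $\Om{n}{S}/\widetilde{\rho^{0\cont}_n}$ is automatic: from the containment $\rho \subseteq \rho^{0\cont}$, it is a further quotient of $\Om{n}{S}/\widetilde{\rho_n}$, which is finite because $\pv{CR(H)}$ is locally finite by Proposition~\ref{p:locally-finite-H}.

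The main obstacle is the transitivity step, where one must splice two witnesses of $\rho^0$-relatedness into a single one; the substitution $\psi(x_{n+1}) = x_{n+1} r_2$ is precisely what realigns the separator letter $x_{n+1}$ so that the two witnesses can be chained through a common intermediate term.
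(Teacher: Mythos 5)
Your proposal is correct in substance and, on the two hardest points, takes a genuinely different route from the paper. For transitivity, the paper passes to idempotents: from normalized witnesses it derives $(ux_ks)^\omega\mathrel{\rho}(vx_ks')^\omega$ and $(vx_kt)^\omega\mathrel{\rho}(wx_kt')^\omega$, multiplies them, and uses the fact that $(vx_ks')^\omega$ and $(vx_kt)^\omega$ are $\Cl R$-equivalent idempotents modulo ${\sim}_{\pv{CR(H)}}$ to absorb the left factor; that step genuinely uses the hypothesis ${\sim}_{\pv{CR(H)}}\subseteq\rho$. Your splicing substitution $\psi(x_{n+1})=x_{n+1}r_2$ followed by right multiplication of the second witness by $\psi(s_1)$ reaches the same conclusion using only full invariance, stability and transitivity of $\rho_{n+1}$, so it is both shorter and valid under the weaker hypothesis $\rho\subseteq{\sim}_{\pv{Sl}}$ alone --- a real simplification (the lower bound on $\rho$ is still needed elsewhere, for finiteness). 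For closedness, the paper extracts convergent subsequences; your presentation of $\rho^{0\cont}_n$ as ${\sim}_{\pv{Sl}}$ intersected with a continuous image of a compact set is an equivalent but tidier packaging. The remaining steps (stability, full invariance, $\rho\subseteq\rho^{0\cont}$, and deducing completeness from finiteness of $\Om n S/{\widetilde{\rho_n}}$) coincide with the paper's.

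One imprecision should be repaired. In your normalization, the direction ``if $ux_{n+1}r\mathrel{\rho_{n+1}}vx_{n+1}s$ for some $r,s\in\Om{n+1}S^1$, then $u\mathrel{\rho^0_n}v$'' is not immediate by applying $0$: one has $0(ux_{n+1}r)=u$ only when $\cont(r)\subseteq\cont(u)\cup\{x_{n+1}\}$, and your compact set imposes no such restriction on $r$ and $s$, so a point of the projection intersected with ${\sim}_{\pv{Sl}}$ is not obviously in $\rho^0_n$. The fix is one line: apply the continuous endomorphism of $\Om{n+1}S$ that fixes the generators in $\cont(u)\cup\{x_{n+1}\}$ and sends every other generator to $x_{n+1}$; it fixes $u$ and $v$ (they have equal content) and normalizes the tails, after which $0$ does return $u$ and $v$. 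In all the other places where you invoke the normalization (containment, stability, transitivity, full invariance) the contents of the tails are controlled by construction, so only the closedness step needs this extra sentence.
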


\begin{proof}
  First, we prove that for each $n$ the relation $\rho^{0\cont}_n$
  is stable. Let $u\mathrel{\rho^{0\cont}} v$ be a pair of pseudowords
  and $w$ be another pseudoword and assume that $u,v,w\in \Om{n}S$.
  Consider the set $A=\cont (uw)\subseteq \{x_1,\dots , x_n\}$. There
  exist $k\in\{1,\dots, n+1\}$ and $s,s'\in\Om {n+1}S^1$ such that
  $$ux_ks \mathrel{\rho_{n+1}} vx_ks',%
  \quad x_k\not\in \cont(u)=\cont(v),%
  \quad x_k=\overline{0}(ux_ks)=\overline{0}(vx_ks')\,.$$ %
  Using the assumption that $\rho$ is a fully invariant system, we may
  assume that $k=n+1$. Since $\rho_{n+1}$ is stable, we get
  $wux_{n+1}s \mathrel{\rho_{n+1}} wvx_{n+1}s'$ from which
  $wu\mathrel{\rho^{0\cont}_n} wv$ follows. To prove
  $uw\mathrel{\rho^{0\cont}_n} vw$, we need to substitute $wx_{n+1}$
  for $x_{n+1}$ in $ux_{n+1}s \mathrel{\rho_{n+1}} vx_{n+1}s'$ first,
  which is possible because $\rho$ is fully invariant.

  Next, we prove that, for each $n$, the relation $\rho^{0\cont}_n$
  is a quasiorder. The relation $\rho^{0\cont}_n$ is clearly
  reflexive. To prove transitivity, assume that we have
  $u\mathrel{\rho^{0\cont}_n}v$ and $v\mathrel{\rho^{0\cont}_n}w$.
  Since $\rho^{0\cont} \subseteq\ \sim_{\pv{Sl}}$ we have
  $\cont(u)=\cont(v)=\cont(w)$. Thus, there are $k\in\{1,\dots,n+1\}$
  and $s,s'\in\Om {n+1}S^1$ such that
  $$ux_ks \mathrel{\rho_{n+1}} vx_ks',%
  \quad x_k\not\in \cont(u)=\cont(v),%
  \quad x_k=\overline{0}(ux_ks)=\overline{0}(vx_ks')\,.$$ %
  We have similarly a letter $x_{\ell}$ and pseudowords $t,t'\in\Om
  {n+1}S^1$ for the relation of pseudowords
  $v\mathrel{\rho^{0\cont}_n}w$. Since $\rho$ is fully invariant, we
  may assume that $x_{\ell}=x_k$. Thus, we get
  $$vx_kt \mathrel{\rho_{n+1}} wx_kt',\  x_k\not\in \cont(v)=\cont(w),\ 
  x_k=\overline{0}(vx_kt)=\overline{0}(wx_kt')\, .$$ If we use
  stability of $\rho$, we get $(ux_ks)^\omega \mathrel{\rho_{n+1}}
  (vx_ks')^\omega$ and $(vx_kt)^\omega \mathrel{\rho_{n+1}}
  (wx_kt')^\omega$. If we multiply the first relation by
  $(vx_kt)^\omega $ on the right, we obtain $(ux_ks)^\omega
  (vx_kt)^\omega \mathrel{\rho_{n+1}} (vx_ks')^\omega(vx_kt)^\omega$.
  Since $0((vx_ks')^\omega)=0((vx_kt)^\omega)=v$, the pseudowords
  $(vx_ks')^\omega$ and $(vx_kt)^\omega$ are \Cl R-equivalent
  idempotents modulo~${\sim}_{\pv{CR(H)}}$. It follows that their
  product $(vx_ks')^\omega(vx_kt)^\omega$ is
  ${\sim}_{\pv{CR(H)}}$-equivalent to the right factor
  $(vx_kt)^\omega$. Thus, we have $(ux_ks)^\omega (vx_kt)^\omega
  \mathrel{\rho_{n+1}} (vx_kt)^\omega$, because
  ${\sim}_{\pv{CR(H)}}\subseteq\rho$. By transitivity of $\rho$ and
  from $(vx_kt)^\omega \mathrel{\rho_{n+1}} (wx_kt')^\omega$, we get
  $(ux_ks)^\omega (vx_kt)^\omega \mathrel{\rho_{n+1}}
  (wx_kt')^\omega$. By applying the $0$ function, we may deduce that
  $u\mathrel{\rho^{0\cont}_n}w$.

  The next step is to show that, for each $n$, the relation
  $\rho^{0\cont}_n$ is closed. Let $(u_k,v_k)_k$ be a sequence of
  pairs of pseudowords in~$\rho^{0\cont}_n$ such that $\lim u_k=u$
  and $\lim v_k=v$. Since $u_k \mathrel{(\sim_{\pv{Sl}})_n} v_k$ holds
  for every~$k$, we deduce that $u \mathrel{(\sim_{\pv{Sl}})_n} v$. By
  the assumption that $u_k\mathrel{\rho^0_n} v_k$, there are
  $\ell_k\in\{1,\dots,n+1\}$ and pseudowords $s_k,s_k'\in\Om {n+1}
  S^1$ such that
  $$u_kx_{\ell_k}s_k \mathrel{\rho_{n+1}} v_kx_{\ell_k}s'_k,\ %
  x_{\ell_k}\not\in \cont(u_k)=\cont(v_k),\ %
  x_{\ell_k} %
  =\overline{0}(u_kx_{\ell_k}s_k) %
  = \overline{0}(v_kx_{\ell_k}s'_k)\,.$$ %
  We consider an infinite set $I$ of indices such that
  $\cont(u_k)=\cont(u)$ for every $k\in I$. Then, we may assume that,
  for all $k\in I$, the variable $x_{\ell_k}$ is equal to $x_{n+1}$
  because $\rho_{n+1}$ is fully invariant. Now, from the sequence of
  pairs $(u_kx_{\ell_k}s_k,v_kx_{\ell_k}s'_k)_{k\in I}$ we can choose
  a convergent subsequence; denote by $(\bar u,\bar v)$ its limit.
  Since $\rho_{n+1}$ is a closed relation, we obtain $ \bar
  u\mathrel{\rho_{n+1}} \bar v$. It follows that $u=0(\bar{u})
  \mathrel{\rho^0_n} 0(\bar{v})=v$ because multiplication is a
  continuous operation on $\Om {n+1}S$.

  We also need to prove that $\rho^{0\cont}$ is fully invariant.
  Assume that we have $u\mathrel{\rho^{0\cont}_n}v$ and consider a
  continuous homomorphism $\varphi: \Om nS \rightarrow \Om mS$. Again,
  $\rho^{0\cont} \subseteq\ \sim_{\pv{Sl}}$ gives $\cont(u)=\cont(v)$
  and, moreover, there are $s,s'\in\Om {n+1}S^1$ such that
  $$ux_{n+1}s \mathrel{\rho_{n+1}} vx_{n+1}s',\ %
  x_{n+1}\not\in \cont(u)=\cont(v),\ %
  x_{n+1}%
  =\overline{0}(ux_{n+1}s)%
  =\overline{0}(vx_{n+1}s')\,.$$ %
  Consider the continuous homomorphism $\varphi' : \Om {n+1}S
  \rightarrow \Om {m+1}S$ defined by the rules
  $\varphi(x_{n+1})=x_{m+1}$ and $\varphi'(x_i)=\varphi(x_i)$ for all
  indices $i\in\{1,\dots , n\}$. It follows that $\varphi'(ux_{n+1}s)
  \mathrel{\rho_{m+1}} \varphi'(vx_{n+1}s')$. Since
  $0(\varphi'(ux_{n+1}s))=\varphi'(u)=\varphi(u)$ and, similarly, we
  have $0(\varphi'(vx_{n+1}s'))=\varphi(v)$, we obtain $\varphi(u)
  \mathrel{\rho_{m}} \varphi(v)$. We have proved that $\rho^{0\cont}$
  is fully invariant.

  Finally, assuming that $u \mathrel{\rho_n} v$, we see that $u$ and
  $v$ have the same content and $ux_{n+1}\mathrel{\rho_{n+1}}
  vx_{n+1}$. Hence, $u \mathrel{\rho^{0\cont}_n} v$ and we get the
  inclusion $\rho_n \subseteq \rho^{0\cont}_n$. So, we have proved
  that $\rho^{0\cont}$ is a fully invariant system of closed stable
  quasiorders such that $\rho\subseteq \rho^{0\cont}$. Since, for each
  $n$, the equivalence relation $\widetilde{\rho_n}$ has finite index,
  the same is true for $\widetilde{\rho^{0\cont}_n}$. Consequently,
  the quotient $\Om n S/{\widetilde{\rho^{0\cont}_n}}$ is finite and
  $\rho^{0\cont}$ is a complete system of pseudoinequalities.\qed
\end{proof}

The previous lemma can be applied in the case where $\rho^0 \subseteq
{\sim}_{\pv{Sl}}$. Since the constructed system of relations
$\rho^0$ may not be contained in $\sim_{\pv{Sl}}$, we need to
understand what happens in such a case.

Note that the inclusion $\rho \subseteq \rho^0$, which was proved in
the previous lemma,
is true in general and does not require the assumptions of the lemma.

\begin{Lemma}
  \label{l:case-rho-0-anti-content}
  Let $\rho$ be a complete system of pseudoinequalities such that
  ${\sim}_{\pv{CR}} \subseteq\rho \subseteq {\sim}_{\pv{Sl}}$. If
  $\rho^0$ is not contained in $\sim_{\pv{Sl}}$, then all relations
  $\rho^0_n$ are symmetric.
\end{Lemma}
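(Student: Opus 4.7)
The plan is to leverage a single content-violating witness in $\rho^0$ into symmetry of every $\rho^0_m$, by exploiting full invariance of $\rho$.

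\emph{Step~1: structural constraint.} Suppose $u_0 \mathrel{\rho^0_n} v_0$ with $\cont(u_0) \neq \cont(v_0)$, witnessed by $u', v' \in \Om{n+1}S$ satisfying $u' \mathrel{\rho_{n+1}} v'$, $0(u') = u_0$, $0(v') = v_0$. Write $u' = u_0 \, x \, s$ and $v' = v_0 \, y \, s'$ with $x = \overline{0}(u')$ and $y = \overline{0}(v')$. Since $\rho \subseteq {\sim_{\pv{Sl}}}$, we have $\cont(u') = \cont(v')$, yielding $\cont(u_0) \cup \{x\} = \cont(v_0) \cup \{y\}$ together with $x \notin \cont(u_0)$ and $y \notin \cont(v_0)$. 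If $x = y$ then $\cont(u_0) = \cont(v_0)$, contradicting the hypothesis; so $x \neq y$, which forces $x \in \cont(v_0) \setminus \cont(u_0)$ and $y \in \cont(u_0) \setminus \cont(v_0)$. In particular, the two contents are incomparable.

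\emph{Step~2: a content-swapped pair.} By full invariance of $\rho$, apply to $u' \mathrel{\rho_{n+1}} v'$ the endomorphism $\phi$ of $\Om{n+1}S$ that swaps $x$ and $y$ and fixes all other variables. Since $u_0$ does not contain $x$ but does contain $y$, the content of $\phi(u_0)$ equals $(\cont(u_0) \setminus \{y\}) \cup \{x\} = \cont(v_0)$; dually, $\cont(\phi(v_0)) = \cont(u_0)$. Now $\phi(u') = \phi(u_0) \cdot y \cdot \phi(s)$ with $y \notin \cont(\phi(u_0))$, so $0(\phi(u')) = \phi(u_0)$; analogously $0(\phi(v')) = \phi(v_0)$. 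Thus $\phi(u_0) \mathrel{\rho^0_n} \phi(v_0)$ is a pair in $\rho^0$ whose two contents are reversed relative to the original, so $\rho^0$ already contains pairs in both content patterns.

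\emph{Step~3: pointwise symmetry of $\rho^0_m$.} Given any $(a, b) \in \rho^0_m$ witnessed by $a' \mathrel{\rho_{m+1}} b'$, the goal is to produce witnesses for $(b, a) \in \rho^0_m$. I would use full invariance of $\rho$ to substitute into $u' \mathrel{\rho_{n+1}} v'$, sending the marker letters $x, y$ to carefully chosen pseudowords (possibly fresh variables) and the remaining variables to images aligned with $a, b$; then use stability and transitivity of $\rho$, together with the pair $(a', b')$ and the swapped pair from Step~2, to obtain some $\alpha \mathrel{\rho_{m+1}} \beta$ with $0(\alpha) = b$ and $0(\beta) = a$.

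The main obstacle is Step~3: choosing the substitution so that the $0$-image computation delivers exactly $b$ on one side and $a$ on the other. The incomparability from Step~1 is the key resource, since the marker letter on each side of the asymmetric witness lies outside the content of the other side; under substitution, this allows the new marker letter to land just outside the $0$-prefix of the intended target. Handling the various cases for how $\cont(a)$ and $\cont(b)$ compare to each other and to the ambient variable set is the delicate combinatorial part, and is where the full strength of incomparability and of full invariance will have to be used simultaneously.
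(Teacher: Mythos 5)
Your Step~1 is a correct structural analysis of a content-violating witness (it essentially reproduces what the paper extracts from such a pair), and Step~2 is also correct but is a dead end: the transposed pair $(\phi(u_0),\phi(v_0))$ is not the pair $(v_0,u_0)$, and knowing that $\rho^0$ contains pairs "in both content patterns" does not move you toward symmetry. The genuine gap is Step~3, which is the entire content of the lemma and which you present only as a plan, conceding that the crux --- arranging the substitution so that the $0$-images come out as $b$ and $a$ --- is unresolved. As written, the proposal does not prove the statement. A telling symptom is that you never use the hypothesis ${\sim_{\pv{CR}}}\subseteq\rho$, which is in fact indispensable.

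What is missing is the following mechanism. First, for an arbitrary pair $a\mathrel{\rho^0_m}b$ one fixes letters $x_k,x_\ell$ with $\cont(ax_k)=\cont(bx_\ell)$, $0(ax_k)=a$ and $0(bx_\ell)=b$; it is these padded targets, not $a$ and $b$ themselves, that get substituted. Second, one splits on a \emph{global} property of $\rho$, not on the witness. If some $\rho$-related pair $(s,t)$ has distinct first letters $y\neq z$, substitute $bx_\ell$ for $y$, $ax_k$ for $z$, and $x_k$ for \emph{every} other variable: the images of $s$ and $t$ then begin with $bx_\ell$ and $ax_k$ respectively, all contents collapse to $\cont(ax_k)$, and applying $0$ yields $b\mathrel{\rho^0}a$ at once. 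If instead every $\rho$-related pair shares its first letter, then $a$ and $b$ share a first letter $x_j$, and one uses your witness $u'\mathrel{\rho}v'$ through its markers $x=\overline0(u')$ and $y=\overline0(v')$: since $x$ occurs inside $0(v')$ and $y$ inside $0(u')$ (your incomparability), the substitution $x\mapsto x_j^\omega ax_k$, $y\mapsto x_j^\omega bx_\ell$, all other variables $\mapsto x_j^\omega$, automatically crosses the targets over and gives $x_j^\omega b\mathrel{\rho^0}x_j^\omega a$; the padding is then stripped using $x_j^{\omega+1}\mathrel{\tilde\rho}x_j$, which is exactly where ${\sim_{\pv{CR}}}\subseteq\rho$ is needed. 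Without the collapse of the remaining variables, the case split, and the stripping step, the "alignment" you hope for in Step~3 cannot be achieved, because the non-marker variables are interleaved arbitrarily through $u_0$, $v_0$, $s$ and $s'$ and would otherwise contaminate the $0$-computation.
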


\begin{proof}
  Let $u \mathrel{\rho^0_n} v$ hold for a given arbitrary pair of
  pseudowords from $\Om nS$. We want to show that also $v
  \mathrel{\rho^0_n} u$. Assume for the moment, that
  $\cont(u)\not=\cont(v)$. Since we obtain the pair $(u,v)$ by
  application of the function $0$ on a certain pair of
  $\rho_{n+1}$-related pseudowords which have the same content, there
  are indices $k,\ell\in\{1,\dots ,n\}$ such that
  \begin{equation}\label{e:rho-zero-outside-content}
    \cont(ux_k)=\cont(vx_{\ell}),\quad 0(ux_k)=u\quad 
    \text{and}\quad 0(vx_\ell)=v\,.
  \end{equation} 
  Note that, whenever $\cont(u)=\cont(v)$, one can take for $x_k$ and
  $x_\ell$ the letter $x_{n+1}$ and (\ref{e:rho-zero-outside-content})
  also holds. The pair of pseudowords $ux_k$, $vx_\ell$ is useful in
  the following considerations.

  We distinguish two cases. First, assume that there is a pair of
  pseudowords $s$ and $t$ such that $s\mathrel{\rho_m} t$, the first
  letter of $s$ is $y$, the first letter of $t$ is $z$, and that these
  letters are different elements from $\{x_1,\dots, x_{m}\}$. Now, we
  substitute in $s$ and $t$ the pseudoword $vx_\ell$ for $y$, the
  pseudoword $ux_k$ for $z$, and $x_{k}$ for other variables. In this
  way, we obtain a continuous homomorphism $\varphi : \Om mS
  \rightarrow \Om{n+1}S$ and we get $\varphi(s) \mathrel{\rho_{n+1}}
  \varphi(t)$. Now, we see that $0(\varphi(s))=v$ and
  $0(\varphi(t))=u$, which gives $v\mathrel{\rho^0_n} u$. Thus, we
  are done in this case.

  In the second case, we assume that all $\rho$-related pseudowords
  have the same first letter. In particular, we may assume that the
  first letter of $u$ and $v$ is $x_j$. However, there are $s$ and $t$
  such that $s\mathrel{\rho_m} t$ and $\cont(0(s))\not=\cont(0(t))$.
  Then, $\overline{0}(s)=y\not= z=\overline{0}(t)$ hold for some
  letters $y,z\in\{x_1,\dots ,x_m\}$. We substitute in both $s$ and
  $t$ the pseudowords $x_j^\omega vx_\ell$ for $z$, $x_j^\omega ux_k$
  for $y$ and $x_j^\omega$ for other variables. In this way, we obtain
  a continuous homomorphism $\varphi : \Om mS \rightarrow \Om{n+1}S$.
  Then, for the resulting $\rho$-related pseudowords $\varphi(s)$ and
  $\varphi(t)$, we have $0(\varphi(s))=x_j^\omega v$ and
  $0(\varphi(t))=x_j^\omega u$ by condition
  (\ref{e:rho-zero-outside-content}). Since $\sim_{\pv{CR}}\,
  \subseteq \rho$, $x_j^{\omega+1}$ is $\tilde{\rho}$-related to
  $x_j$. Hence, the initial factor $x_j^\omega$ in both $\varphi(s)$
  and $\varphi(t)$ can be removed because both $v$ and $u$ start with
  $x_j$. In this way, we obtain certain $\rho$-related pseudowords
  $s'$ and $t'$ such that $0(\varphi(s'))=v$ and $0(\varphi(t'))=u$
  which gives $v \mathrel{\rho^0} u$.\qed
\end{proof}

Now we are ready to prove the main result in this section.

\begin{Prop}
  \label{p:induction}
  Let $\pv H$ be a locally finite pseudovariety of groups. Let $\rho$
  be a complete system of pseudoinequalities such that
  ${\sim}_{\pv{CR(H)}}\subseteq \rho \subseteq {\sim}_{\pv{Sl}}$. Then
  $\rho$ is a complete system of pseudoidentities.
\end{Prop}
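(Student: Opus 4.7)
The strategy is induction on $k=|\cont(u)|$, uniformly over all complete systems of pseudoinequalities satisfying the hypotheses. Concretely, I prove: for every complete system $\sigma$ with $\sim_{\pv{CR(H)}}\subseteq\sigma\subseteq\sim_{\pv{Sl}}$, every $n\ge1$, and every $u,v\in\Om nS$ with $|\cont(u)|\le k$, $u\mathrel{\sigma_n}v$ implies $v\mathrel{\sigma_n}u$. The uniform quantification over $\sigma$ is essential, since the inductive step will invoke the hypothesis on a derived complete system rather than on $\sigma$ itself.

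For the base case $k=1$, both $u$ and $v$ lie in the procyclic subsemigroup generated by the unique letter of their common content. Modulo $\sim_{\pv{CR(H)}}$, this subsemigroup is (isomorphic to) the free one-generated $\pv{CR(H)}$-semigroup, namely the free $\pv H$-group on one generator; this is finite by local finiteness of $\pv H$. On a finite group every stable quasiorder is a congruence (its positive cone is a submonoid, hence a subgroup, and stability forces closure under conjugation), so $\sigma_n$ is automatically symmetric on single-content pairs.

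For the inductive step, assume the result for content sizes less than $k$ and let $|\cont(u)|=k\ge 2$ with $u\mathrel{\sigma}v$. By Lemma~\ref{l:characterizing-rho}(ii), $0(u)\mathrel{\sigma^0}0(v)$, $1(u)\mathrel{\sigma^1}1(v)$, $u\sim_{\pv{Sl}}v$, and $uv^{\omega-1}\mathrel{\sigma}(uv^{\omega-1})^\omega$. To derive $v\mathrel{\sigma}u$ via characterization~(iii) of the same lemma, I need $0(v)\mathrel{\sigma^0}0(u)$, $1(v)\mathrel{\sigma^1}1(u)$, and $vu^{\omega-1}\mathrel{\sigma}(vu^{\omega-1})^\omega$. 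The kernel condition comes for free: since $u\sim_{\pv{Sl}}v$ the classes of $u,v$ are $\Cl J$-equivalent in the finite completely regular semigroup $\Om nS/\sim_{\pv{CR(H)},n}$, so Lemma~\ref{l:trace-kernel} gives the equivalence of the two kernel conditions. It therefore remains to show $0(v)\mathrel{\sigma^0}0(u)$ (the case of $\sigma^1$ being dual).

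For $\sigma^0$ I split into two cases. If $\sigma^0\not\subseteq\sim_{\pv{Sl}}$, Lemma~\ref{l:case-rho-0-anti-content} yields directly that every $\sigma^0_n$ is symmetric, so $0(v)\mathrel{\sigma^0}0(u)$. Otherwise $\sigma^0=\sigma^{0\cont}$, and Lemma~\ref{l:rho-0-c-is-ficsq} ensures that $\sigma^0$ is itself a complete system of pseudoinequalities with $\sim_{\pv{CR(H)}}\subseteq\sigma\subseteq\sigma^0\subseteq\sim_{\pv{Sl}}$, hence satisfying the hypotheses of the proposition. Since $|\cont(0(u))|=k-1<k$, the inductive hypothesis applied to the system $\sigma^0$ (not to $\sigma$) delivers $0(v)\mathrel{\sigma^0}0(u)$. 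The key subtlety and main obstacle is precisely this passage in the second case from $\sigma$ to the a priori different system $\sigma^0$, which is what forces the induction to be formulated uniformly in $\sigma$ rather than for the single system at hand.
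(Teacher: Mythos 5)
Your proof is correct and follows essentially the same route as the paper: the same induction on $|\cont(u)|$, quantified uniformly over all complete systems in the class so that the inductive hypothesis can be applied to the derived system $\sigma^{0\cont}$, with the same case split between Lemma~\ref{l:case-rho-0-anti-content} and Lemma~\ref{l:rho-0-c-is-ficsq} and the same use of Lemma~\ref{l:characterizing-rho}. The only (harmless) deviation is the base case, where the paper reverses $x_n^a\mathrel{\rho}x_n^b$ by an explicit computation (raising to the power $\omega-1$ and multiplying by $x_n^{a+b}$) while you invoke the structural fact that every stable quasiorder on a finite group is symmetric; both arguments rest on the same reduction, via local finiteness of $\pv H$, to a finite monogenic group.
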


\begin{proof}
  We need to prove that, for every positive integer $n$, the relation
  $\rho_n$ is symmetric, i.e., we want to prove the implication
  \begin{equation}\label{e:induction-version-0}
    \forall u,v  : u\mathrel{\rho_n} v \implies v\mathrel{\rho_n} u \, .
  \end{equation}
  This implication is slightly informal as it is not clear how $n$ is
  quantified. At first, we need to clarify the relationship between
  the pseudowords $u,v$ and the index $n$. We know that $u\in \Om nS$
  implies $u\in \Om mS$ for every $m>n$. Therefore, for a pair of
  pseudowords $u\in \Om nS$ and $v\in \Om mS$, there is a minimum $k$
  such that $u,v\in\Om kS$, which we denote $n_{u,v}$ for the purpose
  of this proof. Thus, the index $n$ in the implication
  (\ref{e:induction-version-0}) is meant as this minimum index. Since
  $n$ depends on $u$ and $v$, we need to further clarify from which
  set these pseudowords are taken. So, we denote $\overline{\Omega}$
  the union of all $\Om nS$. Now we may write
  (\ref{e:induction-version-0}) in the following refined form:
  $$\forall u,v\in \overline{\Omega}  : 
  u\mathrel{\rho_{n_{u,v}}} v \implies v\mathrel{\rho_{n_{u,v}}} u.$$

  Moreover, we prove this statement for all possible $\rho$ satisfying
  the assumptions. For that purpose, we denote by $\Gamma_{\pv H}$ the
  class of all complete systems of pseudoinequalities $\rho$ such that
  ${\sim}_{\pv{CR(H)}}\subseteq \rho \subseteq {\sim}_{\pv{Sl}}$. Note
  that the assumption $\sim_{\pv{CR(H)}}\subseteq \rho$ immediately
  gives that $\rho_n$ has finite index for every~$n$. Now, we are
  ready to improve (\ref{e:induction-version-0}) into the final
  precise form:
  \begin{equation}\label{e:induction-version-final}
    \forall u,v \in\overline{\Omega},\  
    \forall \rho \in \Gamma_{\pv H} \  : 
    \ u\mathrel{\rho_{n_{u,v}}} v \implies 
    v\mathrel{\rho_{n_{u,v}}} u \, .
  \end{equation}
  We prove this statement by induction with respect to
  the size of $\cont(u)$.

  Let us assume that $|\cont(u)|=1$ and let $n=n_{u,v}$. Then, from
  the assumption that $\rho\subseteq {\sim}_{\pv{Sl}}$, we obtain the
  equalities $c(u)=c(v)=\{x_n\}$. Since \pv{CR(H)} is locally finite,
  there exist positive integers $a$ and $b$ such that
  $u\sim_{\pv{CR(H)}}x_n^a$ and $v\sim_{\pv{CR(H)}}x_n^b$. As
  ${\sim_{\pv{CR(H)}}}\subseteq\rho$, we deduce that
  $x_n^a\mathrel{\rho}x_n^b$. Raising to the power $\omega-1$ and
  multiplying by $x_n^{a+b}$, we obtain $x_n^b\mathrel{\rho}x_n^a$,
  whence $v\mathrel{\rho}u$.

  Assume next that $|\cont(u)|=k>1$ and that the statement
  (\ref{e:induction-version-final}) holds for every $u$ containing
  less than $k$ letters. Since $u \mathrel{\rho} v$, we get by
  Lemma~\ref{l:characterizing-rho} that
  \begin{equation}
    0(u)\mathrel{\rho^0} 0(v),%
    \ 1(u) \mathrel{\rho^1} 1(v),%
    \ u\sim_{\pv{Sl}}v,%
    \ uv^{\omega-1}\mathrel{\rho}(uv^{\omega-1})^\omega %
    \ \text{and} %
    \ vu^{\omega-1}\mathrel{\rho}(vu^{\omega-1})^\omega\, .
    \label{eq:induction-to-reverse}
  \end{equation}
  We want to use the same lemma which also gives the reverse
  implication. To prove that $v\mathrel{\rho}u$, we need to check that
  $v$ and $u$ may be interchanged in~\eqref{eq:induction-to-reverse}.
  To prove the implication $0(u)\mathrel{\rho^0} 0(v) \Rightarrow
  0(v)\mathrel{\rho^0} 0(u)$, we distinguish two cases. If
  $\rho^0\subseteq {\sim}_{\pv{Sl}}$ then we have
  $\rho^0=\rho^{0\cont} \in \Gamma_{\pv H}$ by
  Lemma~\ref{l:rho-0-c-is-ficsq}. We may now use the induction
  assumption, namely that the statement
  (\ref{e:induction-version-final}) is valid for the pair of
  pseudowords $0(u),0(v)\in\overline{\Omega}$. If $\rho^0$ is not
  contained in $\sim_{\pv{Sl}}$ then $\rho^0$ is symmetric by
  Lemma~\ref{l:case-rho-0-anti-content}. So, in both cases, we obtain
  $0(v)\mathrel{\rho^0} 0(u)$. The implication $1(u)\mathrel{\rho^1}
  1(v) \Rightarrow 1(v)\mathrel{\rho^1} 1(u)$ follows dually from all
  appropriate dual versions of the lemmas. Finally, we may conclude
  that $v \mathrel{\rho} u$ by Lemma~\ref{l:characterizing-rho}.\qed
\end{proof}

We are now ready to establish the main result, which is presented
as Theorem~\ref{t:main-above-sl} in the introduction.

\begin{Thm}
  \label{t:Sl}
  Let $\pv V$ be a pseudovariety of ordered completely regular
  semigroups containing all semilattices. Then $\pv V$ is selfdual.
\end{Thm}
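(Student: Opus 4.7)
The plan is to reduce to the locally-finite-group case in which Proposition~\ref{p:induction} directly applies. Set $\pv H = \pv V \cap \pv G$; since every member of $\pv V$ is completely regular, its subgroups lie in $\pv H$, hence $\pv V \subseteq \pv{CR(H)}$, while $\pv{Sl} \subseteq \pv V$ by hypothesis.

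As recalled at the end of Section~\ref{s:preliminaries}, the pseudovariety $\pv H$ is the directed union of a family of subpseudovarieties $\pv H_\alpha$, each generated by a single finite group (or a finite direct product of finite groups in $\pv H$), and thus each $\pv H_\alpha$ is locally finite. I would set $\pv V_\alpha = \pv V \cap \pv{CR}(\pv H_\alpha)$. Then $\pv{Sl} \subseteq \pv V_\alpha$, and $\pv V_\alpha \cap \pv G \subseteq \pv H_\alpha$ is locally finite, so Proposition~\ref{p:locally-finite-H} yields that $\pv{CR}(\pv H_\alpha)$, and a fortiori $\pv V_\alpha$, is locally finite.

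Each $\pv V_\alpha$ then falls within the scope of Proposition~\ref{p:induction}: its associated system $\rho_{\pv V_\alpha}$ is a complete system of pseudoinequalities satisfying ${\sim_{\pv{CR}(\pv H_\alpha)}} \subseteq \rho_{\pv V_\alpha} \subseteq {\sim_{\pv{Sl}}}$, with $\pv H_\alpha$ a locally finite pseudovariety of groups. The proposition yields that $\rho_{\pv V_\alpha}$ is a complete system of pseudoidentities, equivalently that $\pv V_\alpha$ is selfdual.

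Finally, I would verify that $\pv V = \bigcup_\alpha \pv V_\alpha$ as classes of finite ordered semigroups: a finite $S \in \pv V$ has only finitely many subgroups, all belonging to $\pv H$, and since the family $(\pv H_\alpha)$ is directed and covers $\pv H$, some $\pv H_\alpha$ contains all of them, whence $S \in \pv{CR}(\pv H_\alpha) \cap \pv V = \pv V_\alpha$. Given $(S,{\le}) \in \pv V$, we obtain $(S,{\le}) \in \pv V_\alpha$ for some $\alpha$; by selfduality of $\pv V_\alpha$, we get $(S,{\ge}) \in \pv V_\alpha \subseteq \pv V$, establishing the selfduality of $\pv V$. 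The heart of the matter has already been overcome in the proof of Proposition~\ref{p:induction}; the only step required here is this routine directed-union reduction, so the main obstacle is merely ensuring that the covering family $(\pv V_\alpha)$ inherits all of the needed hypotheses of that proposition, which the definition $\pv V_\alpha = \pv V \cap \pv{CR}(\pv H_\alpha)$ accomplishes.
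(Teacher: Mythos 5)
Your proposal is correct and follows essentially the same route as the paper: intersect $\pv V$ with the pseudovarieties $\pv{CR}(\pv H_\alpha)$ for a directed family of locally finite group pseudovarieties, apply Proposition~\ref{p:induction} to each piece, and conclude via a directed union of selfdual pseudovarieties. The only cosmetic difference is that the paper exhausts all of $\pv G$ by the family $(\pv H_i)$ rather than just $\pv H=\pv V\cap\pv G$, which makes the final union computation slightly more immediate but changes nothing of substance.
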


\begin{proof}
  Let $(\pv{H}_i)_{i\ge1}$ be a sequence of locally finite
  pseudovarieties of groups such that $\pv{H}_i\subseteq \pv{H}_j$ for
  $i<j$ and $\bigcup_{i\ge1} \pv{H}_i=\pv G$. Consider, for every
  $i\ge1$, the pseudovariety of ordered completely regular semigroups
  $\pv{V}_i=\pv V \cap \pv{CR(H_i)}$. Clearly, we have
  $\pv{Sl}\subseteq \pv{V}_i \subseteq \pv{CR(H_i)}$. Next, consider
  the corresponding complete system of pseudoinequalities $\rho_{\pv
    V_i}$ for which we have ${\sim}_{\pv{CR(H_i)}}\subseteq \rho_{\pv
    V_i} \subseteq {\sim}_{\pv{Sl}}$. Then, by
  Proposition~\ref{p:induction}, $\rho_{\pv V_i}$ is a complete system
  of pseudoidentities. In other words, $\pv V_i$ is selfdual.

  Now, we have
  $$\bigcup_{i\ge1} \pv{V}_i %
  =\bigcup_{i\ge1} \left(\pv V \cap \pv{CR(H_i)}\right) %
  = \pv V \cap \bigcup_{i\ge1} \pv{CR(H_i)} %
  = \pv V \cap \pv{CR}=\pv V\, .$$ %
  Finally, the statement of the theorem follows from the observation
  that every directed union of selfdual pseudovarieties is a selfdual
  pseudovariety.\qed
\end{proof}

\section{Normal orthogroups}
\label{s:nocr}

As defined in the introduction, a normal orthogroup is a completely
regular semigroup for which the subsemigroup of idempotents is a
normal band. The pseudovariety of all finite normal orthogroups is
thus given by
$$\pv{NOCR}=\op %
x^{\omega+1}=x,\ %
x^\omega y^\omega = (x^\omega y^\omega)^\omega,\ %
x^\omega y^\omega x^\omega z^\omega x^\omega %
= x^\omega z^\omega x^\omega y^\omega x^\omega\cl .$$ %
A structural description of normal orthogroups is well known: they are
strong normal bands of groups, see, e.g.,
\cite[Section~IV.2]{Petrich&Reilly:1999}. Notice that one of the
consequences of this description is the equality
$\pv{NOCR}=\pv{NB}\vee\pv G$. In some sense, we improve the
characterization for the finite ordered case. Some of the following
results are well known and might be omitted (for example
Lemma~\ref{l:basic-properties-nocr}), however we keep most details to
make the paper self-contained. In comparison
with~\cite{Petrich&Reilly:1999}, we want to modify the results in
three directions. Firstly, we consider ordered semigroups and secondly
we work within the theory of pseudovarieties of finite semigroups. The
third direction, which is only implicitly contained
in~\cite[Theorem~IV.2.7]{Petrich&Reilly:1999}, is the relativization
with respect the fixed pseudovariety of groups.

Throughout this section, \pv V denotes a pseudovariety of ordered
semigroups. Before we introduce the results concerning ordered normal
orthogroups, we explain the importance of the pseudovariety
$\pv{NOCR}$ for our discussions.

\begin{Lemma}
  \label{l:not-above-semilattices}
  Let $\pv V\subseteq \pv{CR}$ be such that $\pv{Sl}\not\subseteq \pv
  V$. Then $\pv V\models x^\omega y^\omega x^\omega \le x^\omega$ or
  $\pv V\models x^\omega\le x^\omega y^\omega x^\omega$. In both cases
  $\pv V\subseteq \pv{NOCR}$.
\end{Lemma}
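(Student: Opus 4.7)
The plan is to translate $\pv{Sl}\not\subseteq\pv V$ into a pseudoinequality with unequal contents, then to substitute variables so that the resulting inequality lands inside a single monogenic profinite subgroup, and finally to exploit triviality of finite ordered subgroups to derive orthodoxy and the stated inequality; the inclusion $\pv V\subseteq\pv{NOCR}$ follows from Ku\v ril's classification of ordered band pseudovarieties recalled in the introduction.

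First, since $\pv{Sl}$ as a selfdual ordered pseudovariety is characterized by pseudoinequalities $u\le v$ with $\cont(u)=\cont(v)$, the hypothesis yields pseudowords $u,v\in\Om nS$ with $\pv V\models u\le v$ and $\cont(u)\neq\cont(v)$. Up to exchanging the roles of $u$ and $v$ (which toggles between the two conclusion inequalities), I may choose a letter $y\in\cont(v)\setminus\cont(u)$. I would then apply the continuous homomorphism $\varphi\colon\Om nS\to\Om{\{x,y\}}S$ sending $y\mapsto g$ (with $g=x^\omega y^\omega x^\omega$) and every other variable to $x^\omega$. The image $\varphi(u)$ is a product of $x^\omega$'s and therefore equals $x^\omega$ in $\pv{CR}$; since $y\in\cont(v)$ and $x^\omega g=gx^\omega=g$, every $x^\omega$ factor of $\varphi(v)$ is absorbed into a neighbouring $g$, so $\varphi(v)=g^\alpha$ for some element of the monogenic closed subsemigroup $\overline{\langle g\rangle}$. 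Hence $\pv V\models x^\omega\le g^\alpha$.

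Second, in any $\pv V$-semigroup $g^\alpha$ lies in the maximal subgroup $\Cl H_{g^\omega}$, which is finite and therefore trivially ordered (as noted in the introduction, every finite ordered group is trivial). Left-multiplying by $g^\omega$ yields $g^\omega x^\omega\le g^\omega g^\alpha$; a short computation in $\pv{CR}$ gives $g^\omega x^\omega=g^\omega$ (from $gx^\omega=g$) and $g^\omega g^\alpha=g^\alpha$, so $g^\omega\le g^\alpha$ inside the trivially ordered group forces $g^\alpha=g^\omega$. Substituting back, $\pv V\models x^\omega\le(x^\omega y^\omega x^\omega)^\omega$.

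Third, to promote this to the stated inequality $\pv V\models x^\omega\le x^\omega y^\omega x^\omega$, I would establish orthodoxy of $\pv V$ by an analogous substitution targeting the maximal subgroups of $x^\omega y^\omega$ and $y^\omega x^\omega$ rather than that of $g$: a careful choice should yield a pseudoinequality between two elements of $\Cl H_{(x^\omega y^\omega)^\omega}$, and the corresponding triviality argument then forces $x^\omega y^\omega=(x^\omega y^\omega)^\omega$ and symmetrically $y^\omega x^\omega=(y^\omega x^\omega)^\omega$. Once orthodoxy holds, $x^\omega y^\omega x^\omega$ is itself idempotent, so $(x^\omega y^\omega x^\omega)^\omega=x^\omega y^\omega x^\omega$ and the inequality collapses as required. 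The dual case (a letter in $\cont(u)\setminus\cont(v)$) produces the other stated inequality by a symmetric argument. For the inclusion $\pv V\subseteq\pv{NOCR}$, orthodoxy yields that the idempotents of every $\pv V$-semigroup form a band carrying a stable order satisfying $e\le efe$ (or its dual); this ordered band pseudovariety is either non-selfdual, in which case Ku\v ril's classification places it in $\pv{NB}$, or selfdual, in which case $e=efe$ identically and the band is rectangular, hence in $\pv{RB}\subseteq\pv{NB}$. Either way the idempotent bands are normal, so $\pv V\subseteq\pv{NOCR}$.

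The main obstacle I anticipate is the orthodoxy step in the third paragraph. The inequality $\pv V\models x^\omega\le(x^\omega y^\omega x^\omega)^\omega$ is resistant to direct multiplicative manipulation because $(x^\omega y^\omega x^\omega)^\omega$ behaves as a subgroup identity, which tends to collapse candidate inequalities to tautologies. Identifying a substitution that produces a non-trivial inequality genuinely inside $\Cl H_{(x^\omega y^\omega)^\omega}$, and verifying the resulting computation in $\Om{\{x,y\}}S$, is the technical crux on which the whole argument hinges.
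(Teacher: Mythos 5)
Your first two paragraphs are sound and in fact more carefully executed than the corresponding step in the paper. The paper's proof starts the same way --- extract $(u,v)\in\rho_{\pv V}$ with $\cont(u)\ne\cont(v)$, substitute $y^\omega$ for the exceptional variable and $x^\omega$ for the rest, and multiply by $x^\omega$ on both sides --- but then simply asserts that this yields $x^\omega y^\omega x^\omega\le x^\omega$ or its dual, with no $\omega$-power on the composite side. From that \emph{unidempotented} inequality the rest is easy and entirely elementary: multiplying $x^\omega y^\omega x^\omega\le x^\omega$ on the right by $y^\omega$ gives $(x^\omega y^\omega)^2\le x^\omega y^\omega$, an inequality between two elements of the same subgroup modulo $\sim_{\pv{CR}}$, whence $(x^\omega y^\omega)^\omega=x^\omega y^\omega$ (orthodoxy); normality then comes from writing $x^\omega y^\omega x^\omega z^\omega x^\omega$ as its own square and applying the inequality to the prefix $x^\omega y^\omega x^\omega$ and the suffix $x^\omega z^\omega x^\omega$. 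The paper never invokes Ku\v ril's classification here; your detour through it would be acceptable, but only once orthodoxy is in hand.

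The genuine gap is exactly the one you flag, and it cannot be closed. Your substitution delivers only $\pv V\models x^\omega\le(x^\omega y^\omega x^\omega)^\omega$, and this pseudoinequality is far weaker than the one the paper claims: the pseudoidentity $x^\omega=(x^\omega y^\omega x^\omega)^\omega$ already holds in \emph{every} completely simple semigroup (there $x^\omega y^\omega x^\omega$ is $\Cl H$-equivalent to $x^\omega$), including non-orthodox ones, so no manipulation of it can yield orthodoxy. Worse, this shows the obstruction is not merely technical in your write-up: the pseudovariety generated by a non-orthodox completely simple semigroup equipped with the equality order consists entirely of completely simple semigroups, hence does not contain $\pv{Sl}$, yet it satisfies neither $x^\omega y^\omega x^\omega\le x^\omega$ nor its dual and is not contained in $\pv{NOCR}$. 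So the statement fails for subpseudovarieties of the pseudovariety \pv{CS} of completely simple semigroups, and the discrepancy between what the substitution actually produces (your $\omega$-powered version, or more generally $x^\omega\le x^\omega q x^\omega$ for some $q$ in the closed subsemigroup generated by $x^\omega$ and $y^\omega$) and what the paper writes down is precisely where this case is lost. Your instinct that the whole argument hinges on this point is correct; completing it requires either strengthening the hypothesis so as to exclude the completely simple case or weakening the conclusion, not a cleverer substitution.
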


\begin{proof}
  We assume that $\rho_{\pv V} \not\subseteq {\sim}_{\pv{Sl}}$. This
  means that there is $(u,v) \in \rho_{\pv V}$ such that there is a
  variable $y$ that occurs in just one of the pseudowords $u$ and $v$.
  If we substitute $y^\omega$ for the variable $y$ and $x^\omega$ for
  all other variables and if we multiply the resulting
  pseudoinequality by $x^\omega$ from both sides, we obtain either
  $x^\omega y^\omega x^\omega\mathrel{\rho_{\pv{V}}} x^\omega$ or
  $x^\omega\mathrel{\rho_{\pv{V}}} x^\omega y^\omega x^\omega$. This
  gives the first part of the statement.

  Now assume that $x^\omega y^\omega x^\omega\mathrel{\rho_{\pv{V}}}
  x^\omega$ as the second case is dual. Multiplying by $y^\omega$
  yields the relation %
  $(x^\omega y^\omega)^2\mathrel{\rho_{\pv{V}}}x^\omega y^\omega$. %
  Multiplying by $x^\omega y^\omega$ and using transitivity, we deduce
  that %
  $(x^\omega y^\omega)^3\mathrel{\rho_{\pv{V}}}x^\omega y^\omega$. %
  Inductively, it follows that $%
  (x^\omega y^\omega)^{n!}\mathrel{\rho_{\pv{V}}}x^\omega y^\omega$ %
  for every $n\ge1$. The left side of this relation converges to
  $(x^\omega y^\omega)^\omega$. Since $\rho_{\pv V}$ is closed, we
  deduce that $(x^\omega y^\omega)^\omega\mathrel{\rho_{\pv
      V}}x^\omega y^\omega$. As $\rho_{\pv V}$ is a stable quasiorder
  and ${\sim_{\pv{CR}}}\subseteq\rho_{\pv V}$, we obtain $\pv
  V\models(x^\omega y^\omega)^\omega=x^\omega y^\omega$. Since every
  product of idempotents is idempotent modulo~\pv V, we get
  \begin{equation}
    \label{e:nocr}
    x^\omega y^\omega x^\omega z^\omega x^\omega \mathrel{\rho_{\pv V}}
    x^\omega y^\omega x^\omega z^\omega x^\omega\ \cdot \ 
    x^\omega y^\omega x^\omega z^\omega x^\omega\, .
  \end{equation}
  Using $x^\omega y^\omega x^\omega\mathrel{\rho_{\pv{V}}} x^\omega$
  and $x^\omega z^\omega x^\omega\mathrel{\rho_{\pv{V}}} x^\omega$ in
  the prefix and the suffix of the right hand side of~(\ref{e:nocr})
  we get $x^\omega y^\omega x^\omega z^\omega x^\omega
  \mathrel{\rho_{\pv V}} x^\omega z^\omega x^\omega y^\omega
  x^\omega$. Exchanging $y$ and $z$, we obtain $x^\omega z^\omega
  x^\omega y^\omega x^\omega \mathrel{\rho_{\pv V}} x^\omega y^\omega
  x^\omega z^\omega x^\omega$ which implies $\pv V\subseteq
  \pv{NOCR}$.\qed
\end{proof}

We recall some basic facts concerning normal orthogroups. We start by
recalling the solution of the word problem for normal bands which is
well known: for the pseudowords $u,v$, we have $\pv{NB} \models u=v$
if and only if $\cont(u)=\cont(v)$, the first letter in $u$ is the
same as the first letter in $v$ and the last letter in $u$ is the same
as the last letter in~$v$. This property implies, for example, that
$\pv{NOCR} \models x^\omega y^\omega x^\omega z^\omega x^\omega=
x^\omega y^\omega z^\omega x^\omega$. We use such pseudoidentities
freely. Other useful pseudoidentities are mentioned in the following
lemma.

\begin{Lemma}
  \label{l:basic-properties-nocr}
  The pseudovariety $\pv{NOCR}$ satisfies the pseudoidentities
  $(xy)^\omega=x^\omega y^\omega$ and $x^\omega y x^\omega z x^\omega
  = x^\omega y z x^\omega$.
\end{Lemma}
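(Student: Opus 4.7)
The plan is to exploit the equality $\pv{NOCR}=\pv{NB}\vee\pv G$, recalled just before the lemma, and reduce the verification to each of the two generating pseudovarieties. Pseudoidentities are preserved by homomorphic images, subsemigroups, and finite direct products, so any pseudoidentity satisfied by both $\pv{NB}$ and $\pv G$ is automatically satisfied by their join $\pv{NB}\vee\pv G=\pv{NOCR}$. It therefore suffices to verify, separately in $\pv{NB}$ and in $\pv G$, the two pseudoidentities
\[
(xy)^\omega=x^\omega y^\omega
\qquad\text{and}\qquad
x^\omega y x^\omega z x^\omega = x^\omega y z x^\omega .
\]

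In every finite group one has $s^\omega=1$ for each $s$, so both sides of the first pseudoidentity evaluate to $1$ and both sides of the second evaluate to $yz$; thus both hold trivially in $\pv G$. In every finite normal band one has $s^\omega=s$, so the first pseudoidentity reduces to the tautology $xy=xy$, while the second reduces to the plain band identity $xyxzx=xyzx$. For this last identity, the plan is a one-line manipulation: decomposing the left-hand side as $x\cdot y\cdot (xz)\cdot x$ and applying the normal-band (medial) law $auva=avua$ gives $x\cdot(xz)\cdot y\cdot x=x^2zyx=xzyx$; applying the same law directly to $xyzx$ also yields $xzyx$, so $xyxzx=xyzx$.

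The only step carrying any content is this short derivation of $xyxzx=xyzx$ inside $\pv{NB}$, which uses only the medial law and idempotence $x^2=x$; no substantive obstacle is expected.
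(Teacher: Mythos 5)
Your proof is correct, but it takes a genuinely different route from the paper's. You reduce everything to the generators of the join via the equality $\pv{NOCR}=\pv{NB}\vee\pv G$: since the class of finite semigroups satisfying a fixed set of pseudoidentities is itself a pseudovariety, it suffices to check the two pseudoidentities in $\pv G$ (trivial, as $s^\omega=1$) and in $\pv{NB}$ (where they reduce to $xy=xy$ and to $xyxzx=xyzx$, the latter following from the medial law and idempotence exactly as you compute). The paper instead works directly from the pseudoidentity basis of $\pv{NOCR}$: for $(xy)^\omega=x^\omega y^\omega$ it shows the two sides are $\Cl J$-equivalent idempotents that are mutually $\le_{\Cl R}$ and $\le_{\Cl L}$, hence $\Cl H$-equivalent by stability and therefore equal; for the second identity it gives a short chain of rewritings using $w=w^{\omega+1}$ and commutation of idempotents. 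What your approach buys is brevity; what the paper's approach buys is self-containedness and freedom from a circularity worry. Indeed, the decomposition $\pv{NOCR}=\pv{NB}\vee\pv G$ is only quoted in the paper as a classical consequence of the strong-normal-band-of-groups structure theorem, and the paper's own (ordered, relativized) version of that decomposition, Theorem~\ref{t:nocr}, is proved \emph{using} Lemma~\ref{l:basic-properties-nocr} via Lemmas~\ref{l:canonical-homomorphisms-on-band} and~\ref{l:psv-is-join-plus-version}. So your argument is sound only because it leans on the external citation to Petrich--Reilly; within the paper's internal logic it would be circular. It would be worth flagging explicitly that you need only the inclusion $\pv{NOCR}\subseteq\pv{NB}\vee\pv G$ and that you are importing it from the literature rather than from anything proved later in the paper.
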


\begin{proof}
  Clearly, the pseudowords $x^\omega y^\omega$ and $xy$ are $\mathcal
  J$-related modulo $\sim_{\pv{NOCR}}$. Since we can interchange the
  placing of idempotents inside a product of idempotents we have
  $(xy)^\omega x^\omega y^\omega \sim_{\pv{NOCR}} x^\omega (xy)^\omega
  x^\omega y^\omega \sim_{\pv{NOCR}} x^\omega x^\omega (xy)^\omega
  y^\omega= (xy)^\omega$. In the same way we get $x^\omega y^\omega
  (xy)^\omega \sim_{\pv{NOCR}} (xy)^\omega$. Thus, modulo
  $\sim_{\pv{NOCR}}$, we have $(xy)^\omega \le_{\mathcal R} x^\omega
  y^\omega$ and also $(xy)^\omega \le_{\mathcal L} x^\omega y^\omega$.
  By stability of profinite semigroups, we deduce that $(xy)^\omega$
  and $x^\omega y^\omega$ are $\mathcal H$-related modulo
  $\sim_{\pv{NOCR}}$. Since both are idempotents, they are equal
  modulo $\sim_{\pv{NOCR}}$.
 
  We may derive the following sequence of pseudoidentities which are
  valid in $\pv{NOCR}$:
  \begin{align*}
    x^\omega y x^\omega z x^\omega
    &= (x^\omega y)^{\omega+1} x^\omega (z x^\omega)^{\omega+1}\\
    &= x^\omega y \cdot  
      x^\omega \cdot (x^\omega y)^{\omega} x^\omega (z x^\omega)^{\omega}
      \cdot x^\omega\cdot z x^\omega\\
    &= x^\omega y \cdot x^\omega \cdot (x^\omega y)^{\omega} (z
      x^\omega)^{\omega} \cdot x^\omega\cdot  z x^\omega\\
    &= (x^\omega y)^{\omega+1}  (z x^\omega)^{\omega+1}
      = x^\omega yzx^\omega.\ \qed
  \end{align*}
\end{proof}

The following lemma is a natural modification of the important
statement that, in every normal orthogroup, the $\mathcal H$-relation
is a congruence.

\begin{Lemma}
  \label{l:canonical-homomorphisms-on-band}
  Let $S$ be an ordered normal orthogroup. Then the mapping $\varphi
  :S \rightarrow E(S)$ given by the rule $\varphi(a)=a^\omega$ is a
  homomorphism of ordered semigroups. Moreover, if $\beta: S
  \rightarrow T$ is a surjective homomorphism such that $T$ is an
  ordered band, then $\beta$ can be factorized through $\varphi$.
\end{Lemma}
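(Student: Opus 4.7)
The plan is to establish the two assertions more or less independently: first the homomorphism property of $\varphi$, then the factorization.

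For the first part, the key ingredient is the pseudoidentity $(xy)^\omega = x^\omega y^\omega$ already proved in Lemma~\ref{l:basic-properties-nocr}. Since $S$ is a finite normal orthogroup, it satisfies this pseudoidentity, so $\varphi(ab)=(ab)^\omega=a^\omega b^\omega=\varphi(a)\varphi(b)$, and $\varphi$ maps into $E(S)$ because $\pv{NOCR}$ is an orthogroup pseudovariety (so products of idempotents are idempotent). For the order-preservation, I would invoke the convention fixed in Section~\ref{s:preliminaries} that in an ordered unary semigroup the partial order is stable under the unary operation $a\mapsto a^{-1}$; since $a^\omega=aa^{-1}$, isotonicity of $\varphi$ follows from stability of multiplication together with isotonicity of inversion. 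Thus $\varphi$ is a homomorphism of ordered semigroups onto $E(S)$ (noting that $\varphi(e)=e$ for every $e\in E(S)$).

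For the second part, suppose $\beta:S\to T$ is a surjective homomorphism of ordered semigroups with $T$ an ordered band. Then for every $a\in S$ the element $\beta(a)$ is idempotent in $T$, so $\beta(a)^\omega=\beta(a)$, and consequently
\[
  \beta(a^\omega)=\beta(a)^\omega=\beta(a).
\]
This identity shows that $\beta$ is constant on the fibers of $\varphi$, so defining $\gamma:E(S)\to T$ as the restriction $\gamma=\beta|_{E(S)}$ gives a map with $\gamma\circ\varphi=\beta$. Since $\gamma$ is a restriction of a homomorphism of ordered semigroups to a (closed under multiplication) subsemigroup, it is itself a homomorphism of ordered semigroups, giving the desired factorization.

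No serious obstacle is expected here: both parts reduce to (i)~applying the already established pseudoidentity $(xy)^\omega=x^\omega y^\omega$ and (ii)~using that in an ordered unary semigroup inversion is isotone; the factorization is then a one-line computation exploiting the idempotency of every element of~$T$. The only point that deserves being explicitly noted is the convention on unary operations preserving order, since without it the isotonicity of $\varphi$ would fail in general.
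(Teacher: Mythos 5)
Your proposal is correct and follows essentially the same route as the paper: multiplicativity of $\varphi$ via the pseudoidentity $(xy)^\omega=x^\omega y^\omega$ from Lemma~\ref{l:basic-properties-nocr}, isotonicity of $s\mapsto s^\omega$, and the factorization via $\beta(s^\omega)=\beta(s)$. (The only cosmetic difference is that you justify isotonicity through the convention that the unary operation preserves the order, whereas in a finite semigroup $s^\omega$ is a fixed power $s^{|S|!}$, so this already follows from stability of multiplication; both justifications are valid here.)
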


\begin{proof}
  The property $\varphi(st)=\varphi(s)\cdot \varphi(t)$, for every
  pair $s,t$ of elements of~$S$, follows from
  Lemma~\ref{l:basic-properties-nocr}. If $s\le t$ then $s^\omega\le
  t^\omega$ follows. Hence, $\varphi$ is also an isotone mapping.

  Now, for $s\in S$ we have $\beta(s)=\beta(s^2)$. The second part
  follows from the fact that $\beta\circ \varphi =\beta$, because
  $\beta(\varphi(s))=\beta(s^\omega)=\beta(s)$.\qed
\end{proof}

Before we exhibit another canonical surjective homomorphism from an
ordered normal orthogroup, we introduce some important examples of
ordered normal orthogroups. The first example of a normal orthogroup
is a $0$-group, that is, a group $G$ enriched by a zero element $0$.
We denote this semigroup $G^0$. Since we are interested in ordered
semigroups, we point out that the partial order that we consider on
$G^0$ is the equality. However, there are other possible stable
partial orders on this semigroup, which we denote in a different way.
The ordered semigroup $G^\top$ is the set $G\cup\{\top\}$ together
with the operation $\cdot$ which is an extension of the multiplication
on $G$ such that the element $\top$ is a zero element which is the
maximum element with respect to the partial order $\le$. Notice that
all the other elements are incomparable, because they are members of
the group. One can see that $G^\top$ is a normal orthogroup satisfying
the pseudoinequality $x^\omega \le x^\omega y^\omega x^\omega$. We
denote the dual ordered semigroup of $G^\top$ as $G^\perp$. This means
that the special element $\perp$ in $G^\perp$ is a zero with respect
to the multiplication and it is the minimum element with respect to
the partial order. Thus, $G^\perp$ satisfies the pseudoinequality
$x^\omega y^\omega x^\omega \le x^\omega$.

If we take $G$ to be a trivial group consisting of the idempotent
element~$1$, then we obtain in the previous construction an ordered
semilattice $\{1\}^\top=U^+=\{1,\top\}$ which satisfies the identity
$x\le xyx$. One can show that $\pv{Sl}^+ = \op x^2=x,xy=yx,x\le xy\cl
=\op x^2=x, x\le yxy\cl$ is the pseudovariety of ordered semigroups
generated by $U^+$: indeed, in every non-trivial ordered semilattice
$S$ satisfying $x\le xy$ we can choose two distinct elements $a,b$
such that $a\ne ab$ and consider the subsemigroup $\{a,ab\}$ which is
isomorphic to $U^+$. By the description, contained in
Proposition~\ref{p:emery} below, of the lattice of all pseudovarieties
of ordered normal bands by Emery~\cite{Emery:1999}, we also see that
$\pv{Sl}^+$ is a minimal pseudovariety of ordered semigroups. The
other minimal pseudovarieties in the lattice are the dual
pseudovariety $\pv{Sl}^{-}$, which is generated by the dual ordered
semigroup $U^{-}=\{1,\perp\}$, and the well known pseudovarieties of
left zero semigroups $\pv{LZ}=\op xy=x\cl$ and right zero semigroups
$\pv{RZ}=\op xy=y\cl$. The pseudovariety $\pv{LZ}$ is generated by a
two element semigroup $L=\{a,b\}$ with the multiplication given by the
rules $aa=ab=a$, $ba=bb=b$. Although one may order this semigroup by
$a<b$ to obtain an ordered semigroup $L^<$, this homomorphic image of
$L$ generates the same pseudovariety, because $L$ is isomorphic to the
subsemigroup of the product $L^< \times L^<$ consisting of two
incomparable elements $(a,b)$ and $(b,a)$. Finally, from left-right
duality we get that $\pv{RZ}$ is also generated by a single
two-element right zero semigroup $R$. Now, we are ready to recall the
description of the lattice of all pseudovarieties of normal
bands~\cite{Emery:1999}, including the subsequent reformulation using
minimal generators, which explains that the lattice is isomorphic to
the 4th power of a two-element lattice.

\begin{Prop}[\cite{Emery:1999}]
  \label{p:emery}
  The lattice of all pseudovarieties of ordered normal bands is
  presented on Figure~\ref{f:emery-lattice}. The characterization of
  every pseudovariety by an inequality is given inside $\pv B$, that
  is the identity $x^2=x$ is satisfied too.
  \begin{figure}
    \begin{center}
      \definecolor{lgray}{gray}{0.65}
      \definecolor{elgray}{gray}{0.85}
      \begin{tikzpicture}[x=0.7mm,y=1.05mm,thick]\small
        \node [] (1) at (0,0) {$\op x=y\cl$}; %
        \node [] (4) at (-14,10) {$\op x\leq yxy\cl$}; %
        \node [] (5) at (14,10) {$\op yxy\leq x\cl$}; %
        \node [] (6) at (0,20){$\op xy=yx\cl$}; %
        
        \node [] (2) at (-60,18) {$\op xy=x\cl$}; %
        \node [] (7) at (-74,28) {$\op x\leq xy\cl$}; %
        \node [] (8) at (-44,28) {$\op xy\leq x\cl$}; %
        \node [] (9) at (-60,38) {$\op xyz=xzy\cl$}; %
        
        \node [] (3) at (60,18) {$\op yx=x\cl$}; %
        \node [] (10) at (44,28) {$\op x\leq yx\cl$}; %
        \node [] (11) at (74,28) {$\op yx\leq x\cl$}; %
        \node [] (12) at (60,38) {$\op xyz=yxz\cl$}; %
        
        \node [] (13) at (0,36) {$\op x=xyx\cl$}; %
        \node [] (14) at (-14,46) {$\op x\leq xyx\cl$}; %
        \node [] (15) at (14,46) {$\op xyx\leq x\cl$}; %
        \node [] (16) at (0,56) {$\op xyzx=xzyx\cl$}; %
        
        \draw [color=elgray] (1) -- (2) -- (13) -- (3) -- (1); %
        \draw [color=elgray] (6) -- (9) -- (16) -- (12) -- (6); %
        \draw [color=elgray] (4) -- (7) -- (14) -- (10) -- (4); %
        \draw [color=elgray] (5) -- (8) -- (15) -- (11) -- (5); %
        \draw [color=lgray] (1) -- (4) -- (6) -- (5) -- (1); %
        \draw [color=lgray] (2) -- (7) -- (9) -- (8) -- (2); %
        \draw [color=lgray] (3) -- (10) -- (12) -- (11) -- (3); %
        \draw [color=lgray] (13) -- (14) -- (16) -- (15) -- (13); %
      \end{tikzpicture}
      \caption{The lattice of pseudovarieties of ordered normal bands.}
      \label{f:emery-lattice}
    \end{center}
  \end{figure}
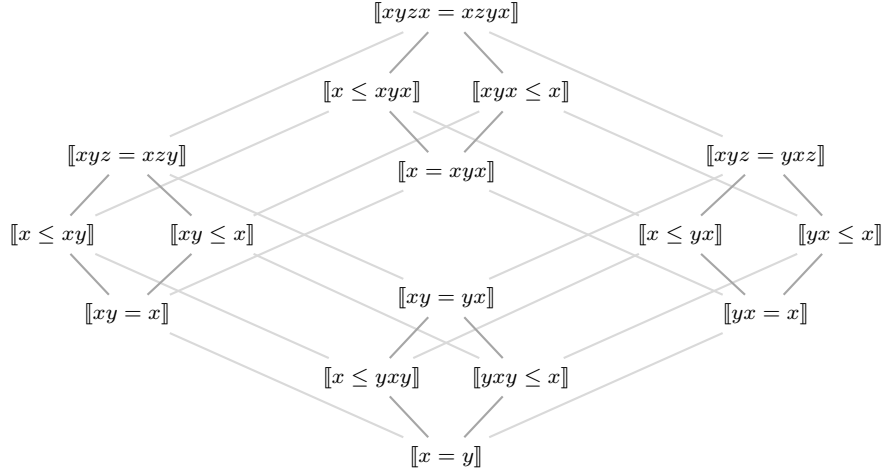
\end{Prop}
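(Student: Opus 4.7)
The plan is to show that $\Cl L_o(\pv{NB})$ is isomorphic to the Boolean lattice $\mathbf 2^4$ with atoms $\pv{LZ}, \pv{RZ}, \pv{Sl}^+, \pv{Sl}^-$, and then to match each of its sixteen elements with one of the defining inequalities displayed in the figure. The first step is atom identification. That $\pv{LZ}$ and $\pv{RZ}$ are atoms of $\Cl L(\pv B)$ is classical, and the minimality of $\pv{Sl}^\pm$ was already justified in the paragraph preceding the statement. To see these are the only atoms of $\Cl L_o(\pv{NB})$, I would take any nontrivial $\pv V \subseteq \pv{NB}$ and any nontrivial $S \in \pv V$; using the decomposition of $S$ as a strong semilattice of rectangular bands (\cite[Section~IV.2]{Petrich&Reilly:1999}), either some rectangular component is nontrivial, in which case $S$ contains a subsemigroup isomorphic to $L$ or $R$, or $S$ is itself a nontrivial semilattice, in which case a two-element subsemigroup is isomorphic to one of $U^0, U^+, U^-$ and hence produces $U^+$ or $U^-$ as an ordered quotient.

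Second, I would define the map $\chi \colon \Cl L_o(\pv{NB}) \to 2^{\{\pv{LZ},\pv{RZ},\pv{Sl}^+,\pv{Sl}^-\}}$ sending $\pv V$ to the set of atoms it contains. This map is order-preserving, and the atom analysis above shows that only the trivial pseudovariety maps to $\emptyset$. The crux of the proof is to show that $\chi$ is injective, equivalently $\pv V = \bigvee \chi(\pv V)$ for every $\pv V$. This reduces to the assertion that every finite ordered normal band divides a finite direct product of copies of $L, R, U^+, U^-$. The unordered part of the assertion is the classical subdirect decomposition of a normal band into left-zero, right-zero, and semilattice factors; the ordered part is a subdirect embedding of each finite ordered semilattice into a product of copies of $U^+$ and $U^-$, which works because the stable partial order on a finite semilattice is the intersection of its isotone two-valued homomorphic images, each of which is a quotient mapping into $U^+$ or $U^-$. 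It then follows that $\chi$ is a lattice isomorphism onto $\mathbf 2^4$, so $\Cl L_o(\pv{NB})$ has exactly sixteen elements.

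Third, I would match nodes with inequalities by evaluating each of the sixteen inequalities in Figure~\ref{f:emery-lattice} on each of the generators $L, R, U^+, U^-$, obtaining a subset of atoms. For instance, $\op x \le xy\cl$ contains $\{\pv{LZ}, \pv{Sl}^+\}$ (since $L$ satisfies $xy = x$ and $U^+$ satisfies $x \le xy$, while $R$ and $U^-$ fail); dually for $\op xy \le x\cl$; the selfdual identity $\op xyz = xzy\cl$ picks out $\{\pv{LZ}, \pv{Sl}^+, \pv{Sl}^-\}$; the identity $\op x = xyx\cl$ picks out $\{\pv{LZ}, \pv{RZ}\}$; and so on through all sixteen cases. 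A direct inspection shows that the sixteen assigned subsets are pairwise distinct and exhaust $2^{\{\pv{LZ},\pv{RZ},\pv{Sl}^+,\pv{Sl}^-\}}$ precisely at the positions drawn in the figure, and the covering relations of the Hasse diagram coincide with subset inclusion on $\chi$.

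The main obstacle is the structural injectivity of $\chi$: proving that every finite ordered normal band really divides a product of the four canonical generators. The unordered direction is standard, but the interaction between the stable order and the strong-semilattice-of-rectangular-bands decomposition requires verifying that the induced order on the semilattice of $\Cl J$-classes is recovered as the intersection of the pullbacks of all isotone maps into $U^+$ and $U^-$. Once this embedding is in hand, the remaining work—listing the sixteen nodes, reading off the defining inequalities, and confirming the Hasse diagram—reduces to a finite verification on the four two-element generators.
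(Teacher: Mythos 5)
The paper gives no proof of this proposition to compare against: it is quoted from Emery \cite{Emery:1999}, and even the proof of the subsequent Corollary~\ref{c:L(ONB)}, which restates the result in exactly the form you work towards (the Boolean lattice $2^4$ on the atoms generated by $U^+$, $U^-$, $L$, $R$), is explicitly omitted as ``technical computations'' apart from one sample evaluation. Your outline is therefore a reconstruction of Emery's argument, and it is the natural one: identify the four atoms, show every finite ordered normal band is a subdirect product of two-element isotone quotients, and then verify the sixteen displayed inequalities on the four generators. Your sample evaluations are correct and agree with the one example the paper does check ($x\le xy$ holds in $L$ and $U^+$ and fails in $R$ and $U^-$), and your atom identification correctly uses the paper's observation that $L$ embeds in $L^<\times L^<$ so that an ordered copy of $L$ or $R$ with a nontrivial order still generates all of \pv{LZ} or \pv{RZ}.

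Two points carry all the weight and are asserted rather than proved. First, the reduction of injectivity of $\chi$ to ``every finite ordered normal band divides a product of copies of $L$, $R$, $U^+$, $U^-$'' is too weak as stated: that only yields $\pv{NB}=\pv{LZ}\vee\pv{RZ}\vee\pv{Sl}^+\vee\pv{Sl}^-$, i.e., injectivity at the top element. What you need --- and what your phrase ``subdirect embedding'' suggests you intend --- is that every finite ordered normal band $S$ embeds, as an ordered semigroup, into a product of two-element isotone \emph{quotients of $S$}; only then does each factor lie in $\langle S\rangle_o$ and give $\langle S\rangle_o=\bigvee\chi(\langle S\rangle_o)$ for every $S$, hence for every \pv V. Second, the separation lemma itself --- for $a\not\le b$ in $S$, produce an isotone homomorphism $\varphi$ onto one of the four generators with $\varphi(a)\not\le\varphi(b)$ --- is the entire mathematical content of Emery's theorem and is left unproved. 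Your formulation of the obstacle is moreover too narrow: you reduce it to recovering ``the induced order on the semilattice of $\Cl J$-classes'' from maps into $U^+$ and $U^-$, but a stable order on a normal band can relate $\Cl J$-equivalent elements (the ordered left-zero semigroup $L^<$ is the basic example), so the left-zero and right-zero quotients must also participate in separating the order, not only the points. Until that lemma is established in full, the proposal is a correct plan rather than a proof.
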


\begin{Cor}
  \label{c:L(ONB)}
  The lattice of all pseudovarieties of ordered normal bands is
  isomorphic to the set of all subsets of the set $\{U^+,U^-,L,R\}$
  partially ordered by the inclusion relation.
\end{Cor}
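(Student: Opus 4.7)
The plan is to define $\Phi:\mathcal{P}(\{U^+,U^-,L,R\}) \to \Cl L_o(\pv{NB})$ by $\Phi(A) = \langle A \rangle$, the pseudovariety of ordered semigroups generated by $A$, and to show it is a lattice isomorphism. Since $\Phi(A\cup B) = \langle A\cup B\rangle = \Phi(A) \vee \Phi(B)$, the map is a join-homomorphism; and a bijective join-homomorphism between lattices is automatically an isomorphism because $x\le y$ is equivalent to $x\vee y = y$. Both lattices have exactly $16 = 2^4$ elements---the powerset trivially, and $\Cl L_o(\pv{NB})$ by counting the nodes in Figure~\ref{f:emery-lattice} via Proposition~\ref{p:emery}---so it suffices to prove $\Phi$ is injective.

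For injectivity, I would establish the \emph{independence} of the four atoms $\pv{Sl}^+, \pv{Sl}^-, \pv{LZ}, \pv{RZ}$: none lies in the join of the other three. Granting this, if $A \neq B$ are distinct subsets then, without loss of generality, some $X \in A \setminus B$; hence $\Phi(B) \subseteq \langle \{U^+,U^-,L,R\} \setminus \{X\}\rangle$, which by independence does not contain $X$, whereas $\Phi(A) \supseteq \langle X \rangle$ does. Thus $\Phi(A) \ne \Phi(B)$.

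The independence check uses the four coatom pseudoinequalities visible in Figure~\ref{f:emery-lattice}. For instance, excluding $U^+$ yields $\langle U^-, L, R\rangle$, and each of the generators $U^-, L, R$ satisfies $xyx \le x$ (in $L$ and $R$ the identity $xyx = x$ holds, while in $U^-$ the element $xyx$ equals $\perp$ whenever $x$ or $y$ equals $\perp$, and equals $1$ otherwise), so their join does too; but $U^+$ fails this pseudoinequality, since $1\cdot\top\cdot 1 = \top \not\le 1$. The three remaining verifications are analogous, using the dual pseudoinequality $x \le xyx$ to exclude $U^-$, and the left and right normal band identities $xyz = xzy$ and $xyz = yxz$ to exclude $R$ and $L$ respectively. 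This independence verification is the only mildly non-routine step; everything else is formal.
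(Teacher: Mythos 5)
Your proof is correct and takes essentially the same route as the paper: both rest on Emery's description (Proposition~\ref{p:emery}) of $\Cl L_o(\pv{NB})$ as a $16$-element lattice together with a finite verification of which of the defining inequalities hold in $U^+$, $U^-$, $L$ and $R$. Your organization of that verification via independence of the four generators simply makes explicit the ``technical computations'' that the paper's proof declares and omits.
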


\begin{proof}
  A formal proof needs to check which inequalities are satisfied in
  the semigroups $U^+$, $U^-$, $L$ and $R$. We omit such technical
  computations and show just one example. The inequality $x\le xy$ is
  satisfied by $U^+$ and $L$ and it is not satisfied by $U^-$ and
  $R$.\qed
\end{proof}

Now, we return to our aim, namely to the description of all 
pseudovarieties of ordered normal orthogroups.
We start with one particular case.

\begin{Lemma}
  \label{l:rectangular-groups}
  Let $\pv V \subseteq \pv{NOCR}$ be such that $\pv V\models x^\omega =
  x^\omega y^\omega x^\omega$. Then, we have $\pv V = (\pv V \cap
  \pv{NB})\vee (\pv{V}\cap \pv G )$.
\end{Lemma}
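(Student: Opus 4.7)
The inclusion $(\pv V \cap \pv{NB}) \vee (\pv V \cap \pv G) \subseteq \pv V$ is immediate, so the plan is to establish the converse by showing that every $S \in \pv V$ embeds, as an ordered semigroup, into a product $E(S) \times G_e$ whose factors lie in $\pv V \cap \pv{NB}$ and $\pv V \cap \pv G$ respectively. Here $e$ is any chosen idempotent and $G_e$ is the maximal subgroup of $S$ at $e$. The hypothesis $x^\omega = x^\omega y^\omega x^\omega$ forces $E(S)$ to be rectangular and therefore normal, so $E(S) \in \pv V \cap \pv{NB}$; and a standard collapse argument for finite ordered groups shows that the stable order induced on $G_e$ is equality, so $G_e \in \pv V \cap \pv G$.

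For the embedding I would combine two homomorphisms. The first, $\varphi: s \mapsto s^\omega$, is a homomorphism of ordered semigroups by Lemma~\ref{l:canonical-homomorphisms-on-band}. The second is $\psi: s \mapsto ese$, which I verify to be a homomorphism into $G_e$: applying the $\pv{NOCR}$ identity $x^\omega y x^\omega z x^\omega = x^\omega y z x^\omega$ with $x = e$, one has $\psi(s)\psi(t) = esete = este = \psi(st)$; iterating the same identity yields $(ese)^n = e s^n e$, and hence $(ese)^\omega = e s^\omega e = e$ by the hypothesis, confirming that $\psi(s) \in G_e$. Both maps are isotone by stability of the order.

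The crux is to show that $\theta = (\varphi, \psi): S \to E(S) \times G_e$ is an order embedding. I plan to reduce this to the identity
\[
s = s^\omega \cdot (ese) \cdot s^\omega,
\]
from which stability immediately delivers $s \le t$ whenever $\theta(s) \le \theta(t)$ (noting that $ese \le ete$ forces $ese = ete$ in the trivially ordered $G_e$). To prove the identity I would use the hypothesis $s^\omega e s^\omega = s^\omega$ twice, together with $s s^\omega = s^\omega s = s$: first $s e s^\omega = (s s^\omega) e s^\omega = s (s^\omega e s^\omega) = s s^\omega = s$, and then
\[
s^\omega (ese) s^\omega = s^\omega e (s e s^\omega) = s^\omega e s = s^\omega e (s^\omega s) = (s^\omega e s^\omega) s = s^\omega s = s.
\]
This short calculation is the main obstacle, since everything else is routine; once it is in place, $S$ divides $E(S) \times G_e$ and the desired inclusion follows.
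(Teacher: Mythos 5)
Your proof is correct and follows essentially the same route as the paper: both decompose $S\in\pv V$ via the map $s\mapsto(s^\omega,ese)$ into $E(S)\times H_e$, using Lemma~\ref{l:canonical-homomorphisms-on-band} and the pseudoidentity $x^\omega yx^\omega zx^\omega=x^\omega yzx^\omega$ to see that both components are homomorphisms. The one noteworthy difference is at the final step: the paper establishes only injectivity of this map (via Green's Lemmas), whereas your identity $s=s^\omega(ese)s^\omega$ — whose derivation from $s^\omega es^\omega=s^\omega$ and $s=ss^\omega$ is correct — additionally shows that the map reflects the order, which is the property actually needed for an ordered division and which the paper leaves implicit.
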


\begin{proof}
  Let $S\in \pv{NOCR}$ be such that $S\models x^\omega = x^\omega
  y^\omega x^\omega$. This means that $E(S)$ is a rectangular band. In
  particular, $S$ is a completely simple semigroup. We choose one
  idempotent $e\in E(S)$ and consider $H_e$, the $\mathcal H$-class of
  $e$. Now, we consider the mapping $\psi : S \rightarrow H_e$ given
  by the rule $\psi(s)=ese$. Clearly, the mapping $\psi$ is isotone.
  Moreover, for every $s,t\in S$ we have $\psi(s) \cdot \psi(t) = ese
  \cdot ete =este=\psi(st)$ by Lemma~\ref{l:basic-properties-nocr}.
  Thus $\psi$ is a homomorphism of ordered semigroups.

  Next, we consider the mapping $\alpha : S \rightarrow E(S)
  \times H_e $ given by the rule
  $\alpha(s)=(\varphi(s),\psi(s))$, where $\varphi : S\rightarrow
  E(S)$ is described in Lemma~\ref{l:canonical-homomorphisms-on-band}.
  Since both $\varphi$ and $\psi$ are homomorphisms, $\alpha$ is a
  homomorphism as well. We claim that $\alpha$ is injective. Let $s$
  and $t$ be such that $\alpha(s)=\alpha(t)$. Then
  $s^\omega=\varphi(s)=\varphi(t)=t^\omega$ gives that $s$ and $t$
  belong to the same $\mathcal H$-class. By Green's Lemmas the mapping
  $s\mapsto ese$ is an injective mapping from $H_{s^\omega}$
  onto $H_e$. Hence, $s=t$ and the claim is proved.

  Since both mappings $\varphi$ and $\psi$ are surjective, it follows
  that $E(S)\in \pv V\cap \pv{NB}$ and $H_e\in \pv V\cap \pv
  G$. Thus, we obtain $S\in (\pv V \cap \pv{NB})\vee (\pv{V}\cap \pv G
  )$, which completes the proof.\qed
\end{proof}

Now, we extend the conclusion of the previous lemma to more pseudovarieties.

\begin{Lemma}
  \label{l:psv-is-join-plus-version}
  Let $\pv V \subseteq \pv{NOCR}$ be such that $\pv V\models
  x^\omega\le x^\omega y^\omega x^\omega$. Then $\pv V =(\pv V \cap
  \pv{NB})\vee (\pv{V}\cap \pv G ) $.
\end{Lemma}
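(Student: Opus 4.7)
The inclusion $(\pv V \cap \pv{NB}) \vee (\pv V \cap \pv G) \subseteq \pv V$ is immediate, since both summands are subpseudovarieties of $\pv V$. For the reverse, the plan is to take an arbitrary $S \in \pv V$ and exhibit it as a divisor of a product $T \times U$ with $T \in \pv V \cap \pv{NB}$ and $U \in \pv V \cap \pv G$, adapting the construction used in Lemma~\ref{l:rectangular-groups}.

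For the band component, I would take $T = E(S)$ with the induced order, together with the canonical homomorphism $\varphi : S \to E(S)$, $s \mapsto s^\omega$, provided by Lemma~\ref{l:canonical-homomorphisms-on-band}; this places $T$ in $\pv V \cap \pv{NB}$. For the group component, I would use the classical description of $S \in \pv{NOCR}$ as a strong normal band of groups, $S = \bigsqcup_{b \in E(S)} H_b$, and take $U$ to be a suitable maximal subgroup $H_e$ --- or, if needed, a finite direct product of such groups indexed by the maximum $\mathcal J$-class of $E(S)$ --- so that $U \in \pv V \cap \pv G$. The core of the argument is then to construct a surjective homomorphism of ordered semigroups $\pi : W \to S$ from an appropriate subsemigroup $W$ of $T \times U$. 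The direct analog of the map from Lemma~\ref{l:rectangular-groups}, namely $s \mapsto (s^\omega, ese)$, may fail to be injective here because $s \mapsto ese$ is a structural homomorphism $H_b \to H_e$ that can collapse group information when $b$ and $e$ lie in different $\mathcal J$-classes; the prototype for the correct construction is instead the surjection $\{e, \top\} \times G \twoheadrightarrow G^\top$ described in the preceding discussion, $(e, g) \mapsto g$ and $(\top, g) \mapsto \top$. I would mimic this by setting $\pi(b, u)$ to be the image of $u$ under the appropriate structural map into $H_b$.

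The hypothesis $x^\omega \le x^\omega y^\omega x^\omega$ is decisive in two ways. On idempotents it reads $b \le ebe$, which guarantees that idempotents outside the maximum $\mathcal J$-class are below their ``lifts'' in the induced order --- precisely the asymmetry exhibited by $G^\top$ --- and is what makes $\pi$ isotone; combined with the natural-order inequality $b \le_n ebe$ (valid in any normal band when $e$ is in the maximum $\mathcal J$-class), it also ensures the existence of the structural maps $H_{ebe} \to H_b$ that appear in the definition of $\pi$. The main obstacle will be the careful verification that $\pi$ is a semigroup homomorphism and is isotone across different $\mathcal J$-classes, a computation that will rely heavily on Lemma~\ref{l:basic-properties-nocr} to manipulate products of idempotents, together with arranging the surjectivity of $\pi$ by choosing $U$ rich enough to cover all maximal subgroups of $S$.
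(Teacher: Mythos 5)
Your overall plan---realizing $S$ as a quotient of a subsemigroup of $E(S)\times U$ with $U$ a product of subgroups, via structural maps out of ``top'' groups---has a genuine gap: the map you describe does not exist in general. The structural homomorphisms of a strong normal band of groups go \emph{downwards} and need not be surjective, and $E(S)$ need not even have a maximum $\mathcal J$-class. Concretely, let $S$ be the chain $\{e>f\}$ of groups with $H_e$ trivial, $H_f=\{f,g\}\cong\mathbb{Z}/2\mathbb{Z}$, the (necessarily trivial) structural map, and stable partial order generated by $e\le f$. Then $S\in\pv{NOCR}$ and $S\models x^\omega\le x^\omega y^\omega x^\omega$, but the maximal subgroup at the top $\mathcal J$-class is trivial, so no map built from structural maps out of top groups can ever hit $g$; your $\pi$ cannot be surjective. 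Nor does taking $U=\prod_e H_e$ and projecting onto the $b$-component give a homomorphism once some structural map is nontrivial. (The parenthetical claim that $b\le_n ebe$ holds in any normal band is also false: since $b\cdot ebe=be$ and $ebe\cdot b=eb$, that inequality is equivalent to $b\le_n e$.)

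The paper's construction goes in the opposite direction. For \emph{every} idempotent $e$ it defines $\psi_e:S\to H_e^\top$ by $\psi_e(s)=ese$ when $es^\omega e=e$ and $\psi_e(s)=\top$ otherwise; the hypothesis $x^\omega\le x^\omega y^\omega x^\omega$ is used precisely to make $\psi_e$ isotone (if $s\le t$ and $et^\omega e=e$, then $es^\omega e\le et^\omega e=e\le es^\omega e$, so $es^\omega e=e$). Combined with $\varphi$ from Lemma~\ref{l:canonical-homomorphisms-on-band}, this yields an \emph{embedding} $S\hookrightarrow E(S)\times\prod_{e\in E(S)}H_e^\top$, with injectivity checked via $\psi_f$ for $f=s^\omega=t^\omega$. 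Each factor $H_e^\top$ is then separately placed in $(\pv V\cap\pv{NB})\vee(\pv V\cap\pv G)$ as a quotient of $H_e\times U^+$. That last step needs $U^+\in\pv V$, which your proposal never establishes and which forces the case distinction opening the paper's proof: either $\pv V\models x^\omega=x^\omega y^\omega x^\omega$ and Lemma~\ref{l:rectangular-groups} applies directly, or some member of $\pv V$ has idempotents with $e\ne efe$ and $e\le efe$, exhibiting $U^+$ as an ordered subsemigroup.
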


\begin{proof}
  If $\pv V\models x^\omega = x^\omega y^\omega x^\omega$ then we can
  apply Lemma~\ref{l:rectangular-groups}. So, assume that there is a
  semigroup $T$ in $\pv V$ which does not satisfy the pseudoidentity
  $x^\omega = x^\omega y^\omega x^\omega$. Thus, there are $e,f\in
  E(T)$ such that $e\not = efe$, $e\le efe$. Hence, $U^+$ is
  isomorphic to the ordered subsemigroup $\{e,efe\}$ of $T$, and
  therefore $U^+$ belongs to $\pv V$.

  Now, let $S$ be an arbitrary ordered semigroup from $\pv V$. Let $e$
  be an arbitrary idempotent in $S$ and $H_e$ be the subgroup of $S$
  formed by the $\mathcal H$-class of~$e$. Recall that $\pv V\subseteq
  \pv{NOCR}$ implies $(ese)^\omega=es^\omega e$, for every $s\in S$.
  So, assuming $es^\omega e=e$, we obtain $ese\in H_e$. Hence we can
  define the mapping $\psi_e : S \rightarrow H_e^\top$ given by the
  rule
  $$\psi_e(x) = \left\{
    \begin{array}{ll} 
      exe, &\text{if } ex^\omega e=e \\ 
      \top, & \text{otherwise.} 
    \end{array}
  \right. $$ We claim that $\psi_e$ is a homomorphism from the ordered
  semigroup $S$ to the ordered normal orthogroup $H_e^\top$.

  Let $s,t\in S$ be arbitrary elements. Since $es^\omega e\cdot e
  t^\omega e = es^\omega t^\omega e=e(st)^\omega e$, we deduce that
  $e(st)^\omega e=e$ if and only if both equalities $es^\omega e=e$
  and $et^\omega e=e$ are valid. Therefore, the disjunction
  $\psi_e(s)=\top$ or $\psi_e(t)=\top$ is equivalent to
  $\psi_e(st)=\top$. And, furthermore, if $\psi_e(st)\not=\top$, i.e.,
  $\psi_e(st)=este$, then $\psi_e(s)=ese$ and $\psi_e(t)=ete$. In this
  case, we get $\psi_e(s)\cdot \psi_e(t)=ese\cdot ete = este$ by
  Lemma~\ref{l:basic-properties-nocr}. Thus, in every case
  $\psi_e(st)=\psi_e(s)\cdot \psi_e(t)$.

  Now, assume that $s,t\in S$ are such that $s\le t$. We need to show
  that $\psi_e(s)\le \psi_e(t)$, which is trivially satisfied if
  $\psi_e(t)=\top$. So, assume that $et^\omega e=e$ and
  $\psi_e(t)=ete$. From $s\le t$ we get $ese\le ete$. Thus, $es^\omega
  e=(ese)^\omega \le (ete)^\omega=et^\omega e=e$. Since $S$ satisfies
  the pseudoidentity $x^\omega\le x^\omega y^\omega x^\omega$, we also
  get $e\le es^\omega e$. Hence, $es^\omega e=e$ and we obtain
  $\psi_e(s)=ese$. Now, the inequality $\psi_e(s)\le \psi_e(t)$
  follows from $s\le t$. We proved the claim.

  Note that $\psi_e$ is surjective if and only if $e$ does not belong
  to the minimal $\mathcal J$-class of $S$. If $e$ belongs to the
  minimal $\mathcal J$-class, then $\psi_e(S)=H_e$. These observations
  are not used in what follows, because we use another argument which
  ensures that $H_e^\top \in \pv V$. We just notice that $H_e^\top$
  may be seen as a homomorphic image of the product $H_e \times U^+$
  using the homomorphism $\beta:H_e \times U^+ \rightarrow H_e^\top$
  mapping $(g,1)\mapsto g$, $(g,\top)\mapsto \top$ for each $g\in
  H_e$. Since we have $U^+ \in \pv V\cap \pv{NB}$ and $H_e \in \pv{V}
  \cap \pv{G}$, we see that $H_e^\top\in (\pv V \cap \pv{NB})\vee
  (\pv{V}\cap \pv G )$. In the rest of the proof, we show that $S$ may
  be reconstructed from these ordered normal orthogroups and from the
  ordered normal band $E(S)\in \pv{V} \cap \pv{NB}$. To prove this, we
  use also the homomorphism $\varphi : S\rightarrow E(S)$ described in
  Lemma~\ref{l:canonical-homomorphisms-on-band}.

  We consider the following homomorphism
  $$\alpha :S \rightarrow E(S) \times \prod_{e\in E(S)} H_e^\top ,$$
  where, for every $s\in S$, we put $\alpha(s)=(\varphi (s),
  (\psi_e(s))_{e\in E(S)})$. Since $\varphi$ and all $\psi_e$'s are
  homomorphisms of ordered semigroups, the mapping $\alpha$ is also a
  homomorphism. We show that $\alpha$ is injective. Let $s,t\in S$ be
  such that $\alpha(s)=\alpha(t)$. Then,
  $\varphi(s)=s^\omega=t^\omega=\varphi(t)$ is an idempotent from
  $E(S)$ which we denote by $f$. Then, we have also
  $\psi_f(s)=\psi_f(t)$, from which we deduce that $fsf=ftf$. Since
  $f=s^\omega=t^\omega$ we get $s=s^{\omega+1}=fsf=ftf=t^{\omega+1}=t$
  and we proved that $\alpha$ is injective.

  Since $H_e^\top \in (\pv V \cap \pv{NB})\vee (\pv{V}\cap \pv G )$,
  for every $e\in E(S)$, we get $S\in (\pv V \cap \pv{NB})\vee
  (\pv{V}\cap \pv G )$ as well. As $S$ was an arbitrary member of $\pv
  V$ we have thus proved that $\pv V\subseteq (\pv V \cap \pv{NB})\vee
  (\pv{V}\cap \pv G )$.\qed
\end{proof}

Let $\pv{RB}=\op xyx=x, x^2=x\cl$ be the pseudovariety of all finite
rectangular bands. We are now ready for establishing a more precise
version of Theorem~\ref{t:main-nocr}.

\begin{Thm}
  \label{t:nocr}
  The mapping $\iota : \mathcal{L}_o(\pv{NOCR}) \rightarrow
  \mathcal{L}_o(\pv{Sl})\times \mathcal{L}(\pv{RB})\times
  \mathcal{L}(\pv{G})$ given by the rule $\iota (\pv V) = (\pv{V}\cap
  \pv{Sl}, \pv V \cap \pv{RB}, \pv V \cap \pv{G})$ is a lattice
  isomorphism.
\end{Thm}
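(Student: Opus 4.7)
The overall strategy is to factor the map $\iota$ through the intermediate map $\iota' \colon \pv V \mapsto (\pv V \cap \pv{NB}, \pv V \cap \pv G)$, reducing the theorem to showing that $\iota'$ is a lattice isomorphism $\mathcal{L}_o(\pv{NOCR}) \to \mathcal{L}_o(\pv{NB}) \times \mathcal{L}(\pv G)$. The further refinement $\mathcal{L}_o(\pv{NB}) \cong \mathcal{L}_o(\pv{Sl}) \times \mathcal{L}(\pv{RB})$ via intersection with $\pv{Sl}$ and $\pv{RB}$ is immediate from Corollary~\ref{c:L(ONB)}: the Boolean lattice on the four generators $U^+, U^-, L, R$ factors naturally as the product of the Boolean lattices on $\{U^+, U^-\}$ and $\{L, R\}$, which correspond to $\mathcal{L}_o(\pv{Sl})$ and $\mathcal{L}(\pv{RB})$ respectively.

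The cornerstone is the decomposition
\[
  \pv V = (\pv V \cap \pv{NB}) \vee (\pv V \cap \pv G)
  \qquad \text{for every } \pv V \in \mathcal{L}_o(\pv{NOCR}),
\]
which I would prove by cases. If $\pv{Sl} \not\subseteq \pv V$, Lemma~\ref{l:not-above-semilattices} forces $\pv V$ to satisfy either $x^\omega \le x^\omega y^\omega x^\omega$ or its dual, and the decomposition follows from Lemma~\ref{l:psv-is-join-plus-version} or its left-right dual. If $\pv{Sl} \subseteq \pv V$, then $\pv V$ is selfdual by Theorem~\ref{t:Sl}, so it suffices to establish the decomposition on $(S,=)$ for $S \in \pv V$ viewed as unordered, since any ordered version $(S,\le) \in \pv V$ is an isotone quotient of $(S,=)$ via the identity. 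The argument of Lemma~\ref{l:psv-is-join-plus-version} then adapts by replacing the target $H_e^\top$ with the $0$-group $H_e^0$ equipped with the equality order: multiplicativity of the analogous map $\psi_e$ carries over via Lemma~\ref{l:basic-properties-nocr}, isotonicity is vacuous, and $H_e^0$ is a quotient of the product $H_e \times U^0$, where $U^0$ is the trivially ordered two-element semilattice in $\pv{Sl} \subseteq \pv V$.

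Granted this decomposition, injectivity of $\iota'$ is immediate. For surjectivity, given $(\pv W, \pv H)$ I would set $\pv V := \pv W \vee \pv H$ and verify $\pv V \cap \pv{NB} = \pv W$ and $\pv V \cap \pv G = \pv H$ by purification. For the band part, any $B \in \pv V \cap \pv{NB}$ is a homomorphic image of some subsemigroup $T$ of a finite product of members of $\pv W \cup \pv H$; since $T \in \pv{NOCR}$, Lemma~\ref{l:canonical-homomorphisms-on-band} factors the surjection $T \twoheadrightarrow B$ through $\varphi_T \colon T \to E(T)$, and $E(T)$ embeds in $\prod_i E(S_i)$, whose $\pv H$-factors are trivial and whose $\pv W$-factors lie in $\pv W$, yielding $E(T) \in \pv W$ and hence $B \in \pv W$. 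For the group part, any group $G \in \pv V$ is a quotient of such a $T$ by a group congruence, while the projection of $T$ onto its $\pv H$-coordinates is already a group congruence; since the latter contains the least group congruence of $T$, the quotient $G$ factors through a subsemigroup of a product of $\pv H$-members, hence lies in $\pv H$.

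The order-embedding property in both directions follows readily: $\pv V_1 \subseteq \pv V_2$ is equivalent to componentwise inclusion under $\iota'$ via the decomposition, so $\iota'$ is a bijective order-embedding of lattices and therefore a lattice isomorphism. The main obstacle is the selfdual case of the decomposition, which demands a careful adaptation of Lemma~\ref{l:psv-is-join-plus-version} without the ordered witness $U^+$; one must instead use the equality-ordered $0$-group $H_e^0$ together with $U^0 \in \pv{Sl}$, and recheck that all the relevant homomorphisms remain valid. A secondary technical point arises in the group purification, where the maximum group image of $T$ must be identified as a quotient of the $\pv H$-projection by invoking the universal property of the least group congruence.
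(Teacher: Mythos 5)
Your proposal follows essentially the same route as the paper: the same factorization through $\Cl L_o(\pv{NB})\times\Cl L(\pv G)$ using Corollary~\ref{c:L(ONB)}, the same case split for the decomposition $\pv V=(\pv V\cap\pv{NB})\vee(\pv V\cap\pv G)$ (Lemma~\ref{l:not-above-semilattices} together with Lemma~\ref{l:psv-is-join-plus-version} and its dual when $\pv{Sl}\not\subseteq\pv V$; Theorem~\ref{t:Sl}, reduction to the equality order, and the replacement of $H_e^\top$ by $H_e^0$ otherwise), and the same band purification via Lemma~\ref{l:canonical-homomorphisms-on-band}.

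The one step where your argument as written does not go through is the group purification. You note that the kernel $\ker\pi$ of the projection of $T$ onto its $\pv H$-coordinates is a group congruence, hence contains the least group congruence $\sigma$, and conclude that the group quotient $G$ of $T$ factors through a subsemigroup of a product of members of~$\pv H$. But $\sigma\subseteq\ker\pi$ only gives that $T/\ker\pi$ is a quotient of the maximal group image $T/\sigma$, which is the wrong direction: to place $G$ (a quotient of $T/\sigma$) in $\pv H$ you need $T/\sigma\in\pv H$, i.e., the reverse containment $\ker\pi\subseteq\sigma$. That reverse containment does hold in this particular situation (for $T\le S_1\times S_2$ with $S_1$ a band and $S_2$ a group, two elements $(b,g)$ and $(b',g)$ are both $\sigma$-related to $(bb',g)$ because the idempotents $(b,1)$ and $(b',1)$ of~$T$ are identified by~$\sigma$), but it requires this extra computation, which your write-up does not supply. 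The paper sidesteps the issue by invoking the standard fact that a surjective homomorphism from a finite semigroup onto a group restricts to a surjection from some subgroup, so that $\pv V\mapsto\pv V\cap\pv G$ is a complete lattice homomorphism $\Cl L_o(\pv S)\to\Cl L(\pv G)$ (as in Auinger--Hall--Reilly--Zhang), which immediately yields $(\pv W\vee\pv H)\cap\pv G=\pv H$. With either repair your proof is complete.
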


\begin{proof}
  From the structure of the lattice $\Cl L_o(\pv{NB})$, we know that
  the correspondence $\pv V\mapsto(\pv V\cap\pv{Sl},\pv V\cap\pv{RB})$
  defines a lattice isomorphism $\Cl L_o(\pv{NB})\to\Cl
  L_o(\pv{Sl})\times\Cl L(\pv{RB})$. We consider the mapping $\gamma :
  \mathcal{L}_o(\pv{NB})\times \mathcal{L}(\pv{G}) \rightarrow
  \mathcal{L}_o(\pv{NOCR})$ given by the rule $\gamma(\pv V, \pv H)
  =\pv V\vee \pv H$. Clearly, both mappings $\iota$ and $\gamma$ are
  isotone mappings between the ordered sets $\mathcal{L}_o(\pv{NOCR})$
  and $ \mathcal{L}_o(\pv{NB})\times \mathcal{L}(\pv{G})$. We show
  that they are mutually inverse mappings. This implies that they are
  isomorphisms of ordered sets and therefore also lattice
  isomorphisms.

  To show that $\gamma \circ \iota$ is the identity mapping, we need
  to prove that $(\pv V \cap \pv{NB})\vee (\pv{V}\cap \pv G )=\pv V$
  for every $\pv V \subseteq \pv{NOCR}$. This is true if $\pv V\models
  x^\omega\le x^\omega y^\omega x^\omega$, by
  Lemma~\ref{l:psv-is-join-plus-version}. Clearly, one may use the
  dual version of the lemma if $\pv V\models x^\omega y^\omega
  x^\omega \le x^\omega$. So, we may assume that $\pv V$ does not
  satisfy any of these two pseudoinequalities. Thus, by
  Lemma~\ref{l:not-above-semilattices} we know that $\pv{Sl} \subseteq
  \pv V$. From Theorem~\ref{t:Sl} we get that $\pv V$ is
  selfdual. This means that if we consider an arbitrary ordered
  semigroup $(S,\cdot,\le)\in\pv V$, then the pseudovariety $\pv V$
  contains also the ordered semigroup $(S,\cdot,=)$. Since
  $(S,\cdot,\le)$ is a homomorphic image of $(S,\cdot,=)$ we may deal
  only with the case of unordered semigroups.

  So, let $(S,\cdot,=)\in \pv V$ be arbitrary. Now, it is possible to
  modify the proof of Lemma \ref{l:psv-is-join-plus-version} in such a
  way that $H_e^\top$ is replaced by $H_e^0$. Since the partial order
  on $S$ is equality, the mapping $\psi_e$ is trivially isotone.
  Moreover, $\pv V$~contains \pv{Sl} and, therefore, $H_e^0 \in (\pv V
  \cap \pv{NB})\vee (\pv{V}\cap \pv G )$. Thus, we conclude that $S\in
  (\pv V \cap \pv{NB})\vee (\pv{V}\cap \pv G )$. We have proved the
  required equality $(\pv V \cap \pv{NB})\vee (\pv{V}\cap \pv G )=\pv
  V$.

  To show that $\iota\circ \gamma$ is the identity mapping, we need to
  show that $(\pv V \vee \pv H )\cap \pv{NB}=\pv V$ and $(\pv V \vee
  \pv H )\cap \pv G=\pv H$ for every pair $\pv V \subseteq \pv{NB}$
  and $\pv H \subseteq \pv{G}$.

  For the first equality, one direction, namely $(\pv V \vee \pv
  H)\cap \pv{NB}\supseteq \pv V$, is trivial. Let $S$ be an arbitrary
  member of $(\pv V \vee \pv H )\cap \pv{NB}$. This means that there
  are $S_1\in \pv V$, $S_2\in\pv H$ and there is an ordered
  subsemigroup $T$ of $S_1 \times S_2$ such that there is a surjective
  homomorphism of ordered semigroups $\beta : T \rightarrow S$. Since
  $S_1 \times S_2 \in \pv V \vee \pv H \subseteq \pv{NB}\vee \pv G
  \subseteq \pv{NOCR}$, by
  Lemma~\ref{l:canonical-homomorphisms-on-band} we know that the
  restriction of $\beta$ to $E(T)$ is also a surjective homomorphism
  of ordered semigroups. Hence, we may assume that $T$ is an ordered
  band. Since $S_2$ is a group, we see that $T$ is isomorphic to a
  subsemigroup of $S_1$. Hence, we have $T \in \pv V$ and,
  consequently, $S \in \pv V$ and the equality $(\pv V \vee \pv H)\cap
  \pv{NB}= \pv V$ is proved.

  From the well known fact that, for a surjective homomorphism
  $\beta:T\to G$, where $T$ is a finite semigroup and $G$ is a group,
  there is a subgroup $H$ of $T$ such that $\beta(H)=G$, it follows
  that the mapping $\pv V\mapsto \pv V\cap\pv G$ is a complete
  homomorphism $\mathcal{L}_o(\pv S)\to\mathcal{L}(\pv G)$. This was
  observed in \cite[Theorem~3.1]{Auinger&Hall&Reilly&Zhang:1995} for
  the unordered case, but the easy proof is the same for the ordered
  case. In particular, we have $(\pv V\vee\pv H)\cap\pv G=\pv H$ and
  the proof is complete.\qed
\end{proof}

\section{Applications}
\label{sec:applications}

We gather in this section several applications of our results. 

Selfduality of pseudovarieties of ordered completely regular
semigroups may now be characterized by very simple conditions.

\begin{Thm}
  \label{t:selfduality}
  Let \pv V be a pseudovariety of ordered completely regular
  semigroups. Then the following conditions are equivalent:
  \begin{enumerate}[(i)]
  \item\label{item:selfduality-1} the pseudovariety \pv V is selfdual;
  \item\label{item:selfduality-2} the intersection $\pv V\cap\pv{NB}$
    is selfdual;
  \item\label{item:selfduality-3} the intersection $\pv V\cap\pv{Sl}$
    is selfdual;
  \item\label{item:selfduality-4} either both or none of the ordered
    semilattices $U^+$ and $U^-$ belong to~\pv V.
  \end{enumerate}
\end{Thm}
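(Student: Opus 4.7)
The plan is to run the cycle (\ref{item:selfduality-1})\,$\Rightarrow$\,(\ref{item:selfduality-2})\,$\Rightarrow$\,(\ref{item:selfduality-3})\,$\Leftrightarrow$\,(\ref{item:selfduality-4})\,$\Rightarrow$\,(\ref{item:selfduality-1}). The first two implications are essentially tautological: both $\pv{NB}$ and $\pv{Sl}$ are selfdual, so intersecting them with the selfdual pseudovariety $\pv V$ preserves selfduality, and the identity $\pv V\cap\pv{Sl}=(\pv V\cap\pv{NB})\cap\pv{Sl}$ takes care of (\ref{item:selfduality-2})\,$\Rightarrow$\,(\ref{item:selfduality-3}).

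For (\ref{item:selfduality-3})\,$\Leftrightarrow$\,(\ref{item:selfduality-4}), I would invoke Corollary~\ref{c:L(ONB)} to observe that $\mathcal L_o(\pv{Sl})$ consists of exactly four pseudovarieties, namely the trivial one, $\pv{Sl}^+$, $\pv{Sl}^-$ and $\pv{Sl}$ itself, of which only the trivial pseudovariety and $\pv{Sl}$ are selfdual, $\pv{Sl}^+$ and $\pv{Sl}^-$ being each other's duals. Since $U^+\in\pv V$ is equivalent to $\pv{Sl}^+\subseteq\pv V\cap\pv{Sl}$, and likewise for $U^-$, the intersection $\pv V\cap\pv{Sl}$ is selfdual precisely in the two regimes described by~(\ref{item:selfduality-4}).

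The real substance lies in (\ref{item:selfduality-4})\,$\Rightarrow$\,(\ref{item:selfduality-1}), which I would split into two cases matching the disjunction in~(\ref{item:selfduality-4}). If both $U^+$ and $U^-$ lie in~$\pv V$, then $\pv{Sl}\subseteq\pv V$ and Theorem~\ref{t:Sl} delivers selfduality at once. The more delicate case is when neither belongs to~$\pv V$: then $\pv{Sl}\not\subseteq\pv V$, so Lemma~\ref{l:not-above-semilattices} forces $\pv V\subseteq\pv{NOCR}$, after which I can invoke the isomorphism $\iota$ of Theorem~\ref{t:nocr}. The triple $\iota(\pv V)=(\pv V\cap\pv{Sl},\pv V\cap\pv{RB},\pv V\cap\pv G)$ has selfdual components: the first is trivial by assumption; the second lies in $\mathcal L(\pv{RB})$ and therefore corresponds to a selfdual pseudovariety; and the third is a pseudovariety of groups, selfdual because finite groups admit only the equality order. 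Combining these with the selfduality of $\pv{Sl}$, $\pv{RB}$ and $\pv G$ yields $\iota(\pv V^d)=\iota(\pv V)$ componentwise, and injectivity of $\iota$ then forces $\pv V^d=\pv V$. The heavy lifting is entirely absorbed into Theorems~\ref{t:Sl} and~\ref{t:nocr}, so the only remaining task is this clean bookkeeping.
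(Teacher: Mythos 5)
Your proposal is correct and follows essentially the same route as the paper: the forward implications via selfduality of intersections, (iii)$\Rightarrow$(iv) via Corollary~\ref{c:L(ONB)}, and (iv)$\Rightarrow$(i) split into the case $\pv{Sl}\subseteq\pv V$ (Theorem~\ref{t:Sl}) and the case $\pv V\subseteq\pv{NOCR}$ (Lemma~\ref{l:not-above-semilattices} plus Theorem~\ref{t:nocr}). The only cosmetic difference is that in the last step you argue via injectivity of $\iota$ applied to $\pv V$ and $\pv V^d$, whereas the paper uses the join decomposition $\pv V=(\pv V\cap\pv{Sl})\vee(\pv V\cap\pv{RB})\vee(\pv V\cap\pv G)$ into selfdual pieces; both are immediate consequences of the same isomorphism.
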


\begin{proof}
  The implications
  (\ref{item:selfduality-1})\,$\Rightarrow$\,(\ref{item:selfduality-2})\,$\Rightarrow$\,(\ref{item:selfduality-3})
  follow from the fact that the intersection of selfdual
  pseudovarieties is also selfdual. On the other hand,
  (\ref{item:selfduality-3})\,$\Rightarrow$\,(\ref{item:selfduality-4})
  is an immediate consequence of Corollary~\ref{c:L(ONB)}.
  
  To prove that
  (\ref{item:selfduality-4})\,$\Rightarrow$\,(\ref{item:selfduality-1}),
  note first that, if both $U^+$ and $U^-$ belong to~\pv V, then \pv V
  contains~\pv{Sl}; hence, \pv V is selfdual by
  Theorem~\ref{t:Sl}. On the other hand, if neither $U^+$ nor
  $U^-$ belong to~\pv V, then \pv V is contained in~\pv{NOCR} by
  Lemma~\ref{l:not-above-semilattices} and $\pv V=(\pv
  V\cap\pv{Sl})\vee(\pv V\cap\pv{RB})\vee(\pv V\cap\pv G)$ by
  Theorem~\ref{t:nocr}; by Corollary~\ref{c:L(ONB)}, we conclude that
  $\pv V\cap\pv{Sl}$ is the trivial pseudovariety and, therefore, \pv
  V~is selfdual.\qed
\end{proof}

Given a pseudovariety of ordered semigroups \pv V, note that $\pv
V\vee \pv V^d$ is the least selfdual pseudovariety containing~\pv V.
We call it the selfdual closure of~\pv V and denote it~$\widetilde{\pv
V}$.

\begin{Prop}
  \label{p:selfdual-closure-description}
  The selfdual closure of a pseudovariety \pv V of ordered completely
  regular semigroups is the join $\pv V\vee\pv V'$ where $\pv V'$ is
  the selfdual closure of\/ $\pv V\cap\pv{Sl}$.
\end{Prop}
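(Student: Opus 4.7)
The plan is to prove both inclusions $\widetilde{\pv V}\supseteq\pv V\vee\pv V'$ and $\widetilde{\pv V}\subseteq\pv V\vee\pv V'$, relying on the machinery already established, in particular Theorem~\ref{t:Sl}, Theorem~\ref{t:selfduality}, and Corollary~\ref{c:L(ONB)}.

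The forward inclusion $\pv V\vee\pv V'\subseteq\widetilde{\pv V}$ should be essentially immediate: by definition $\pv V\subseteq\widetilde{\pv V}$, and since $\widetilde{\pv V}$ is selfdual and contains $\pv V\cap\pv{Sl}$, it must also contain the selfdual closure $\pv V'=\widetilde{\pv V\cap\pv{Sl}}$. Taking the join settles this direction.

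For the reverse inclusion, since $\pv V\subseteq\pv V\vee\pv V'$ and $\widetilde{\pv V}$ is by definition the smallest selfdual pseudovariety containing~$\pv V$, it suffices to verify that $\pv V\vee\pv V'$ is selfdual. Here I would split into two cases according to whether $\pv V\cap\pv{Sl}$ is trivial. If it is trivial, then $\pv V'$ is also trivial, so $\pv V\vee\pv V'=\pv V$; the selfduality of~$\pv V$ then follows from the equivalence (\ref{item:selfduality-3})\,$\Leftrightarrow$\,(\ref{item:selfduality-1}) of Theorem~\ref{t:selfduality}. If instead $\pv V\cap\pv{Sl}$ is nontrivial, then by Corollary~\ref{c:L(ONB)} it must contain at least one of $U^+$ or $U^-$; its selfdual closure $\pv V'$ therefore contains both, forcing $\pv V'=\pv{Sl}$. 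Consequently $\pv V\vee\pv V'\supseteq\pv{Sl}$, and Theorem~\ref{t:Sl} gives selfduality directly.

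I do not anticipate a genuine obstacle here, as the argument is essentially a bookkeeping assembly of earlier results. The only delicate point is the case split driven by the structure of the semilattice part: one must not try to argue uniformly that $(\pv V\vee\pv V')\cap\pv{Sl}$ decomposes as the join of the intersections (which is not needed), but rather exploit the fact that the lattice $\Cl L_o(\pv{Sl})$ has only the four elements described by Corollary~\ref{c:L(ONB)} restricted to semilattices, so $\pv V'$ is either trivial or equal to~$\pv{Sl}$, and each possibility is handled by one of the two main selfduality theorems.
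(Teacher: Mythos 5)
Your proof is correct, but it assembles the ingredients differently from the paper. The paper splits on whether $\pv{Sl}\subseteq\pv V$: if so, Theorem~\ref{t:Sl} gives selfduality of $\pv V$ itself; if not, it invokes the decomposition $\pv V=(\pv V\cap\pv{Sl})\vee(\pv V\cap\pv{RB})\vee(\pv V\cap\pv G)$ from Theorem~\ref{t:nocr}, observes that only the $\pv{Sl}$-component can fail to be selfdual, and replaces that component by $\pv V'$ to obtain a join of selfdual pseudovarieties. You instead split on whether $\pv V\cap\pv{Sl}$ is trivial, and in neither branch do you need Theorem~\ref{t:nocr} directly: the trivial branch is dispatched by the equivalence (\ref{item:selfduality-3})\,$\Leftrightarrow$\,(\ref{item:selfduality-1}) of Theorem~\ref{t:selfduality}, and the nontrivial branch by the observation that $\pv V'$ must contain both $U^+$ and $U^-$ and hence equals $\pv{Sl}$ by Corollary~\ref{c:L(ONB)}, so that $\pv V\vee\pv V'\supseteq\pv{Sl}$ and Theorem~\ref{t:Sl} applies to the join. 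This is a clean shortcut --- it avoids re-deriving anything structural about \pv{NOCR} at this point --- though the saving is partly cosmetic, since Theorem~\ref{t:selfduality} itself was proved using Theorem~\ref{t:nocr}. Your explicit verification of the forward inclusion and your remark that one should not attempt to distribute the intersection with~\pv{Sl} over the join are both sound; the paper leaves these points implicit.
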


\begin{proof}
  If \pv V contains~\pv{Sl}, then $\pv V\vee\pv V'=\pv V$ is selfdual
  by Theorem~\ref{t:Sl}. Otherwise, by Theorem~\ref{t:nocr} we have a
  decomposition $\pv V=(\pv V\cap\pv{Sl})\vee(\pv
  V\cap\pv{RB})\vee(\pv V\cap\pv G)$, where the only term that may not
  be selfdual is $\pv V\cap\pv{Sl}$. Since the selfdual closure
  of~$\pv V\cap\pv{Sl}$ is precisely $\pv V'$, the result is now
  immediate.\qed
\end{proof}

Following the terminology of~\cite{Almeida&Klima:2015a}, we say that a
pseudovariety of semigroups is \emph{order primitive} if it is not
generated by a proper subpseudovariety of ordered semigroups;
equivalently, a pseudovariety is order primitive if it is not the
selfdual closure of a non-selfdual pseudovariety of order semigroups.
We also recall the Krohn-Rhodes complexity pseudovarieties
\cite{Krohn&Rhodes:1968}. Denote by \pv A the pseudovariety consisting
of all finite aperiodic semigroups, that is finite semigroups all of
whose subgroups are trivial. Note that $\pv{CR}\cap\pv A=\pv B$. Given
pseudovarieties of semigroups \pv V and \pv W, $\pv V*\pv W$ denotes
the pseudovariety of semigroups generated by all semidirect products
$S*T$ with $S\in\pv V$ and $T\in\pv W$. It is well known that this
defines an associative operation on~$\Cl L(\pv S)$. The complexity
pseudovarieties $\pv C_n$ are defined recursively by $\pv C_0=\pv A$
and $\pv C_{n+1}=\pv C_n*\pv G*\pv A$. From the Krohn-Rhodes
decomposition theorem it follows that the ascending chain $\pv C_n$
covers~\pv S. For much more on the Krohn-Rhodes complexity,
see~\cite{Rhodes&Steinberg:2009qt}.

\begin{Thm}
  \label{t:order-primitive}
  Let $\pv V\in\Cl L(\pv{CR})$. Then \pv V is not order primitive if
  and only if\/ $\pv V\subseteq\pv{NOCR}$ and $\pv V\cap\pv B$ lies in
  the interval $[\pv{Sl},\pv{NB}]$.
\end{Thm}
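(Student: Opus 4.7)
The plan is to treat the two directions separately, relying on Theorem~\ref{t:selfduality}, Proposition~\ref{p:selfdual-closure-description}, Theorem~\ref{t:nocr}, and Corollary~\ref{c:L(ONB)} as the main ingredients. Throughout, I would use that ``$\pv V$ is not order primitive'' means exactly that $\pv V=\widetilde{\pv W}$ for some proper sub-pseudovariety $\pv W\subsetneq\pv V$ of ordered semigroups, in which case such $\pv W$ must itself be non-selfdual (otherwise $\widetilde{\pv W}=\pv W$).

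For the forward direction, starting with such a $\pv W$, I would invoke Theorem~\ref{t:selfduality}(\ref{item:selfduality-4}) to conclude that exactly one of $U^+, U^-$ belongs to $\pv W$, say $U^+\in\pv W$ and $U^-\notin\pv W$ up to duality. Since $\pv{Sl}\not\subseteq\pv W$, Lemma~\ref{l:not-above-semilattices} yields $\pv W\subseteq\pv{NOCR}$. Corollary~\ref{c:L(ONB)} then forces $\pv W\cap\pv{Sl}=\pv{Sl}^+$, and Proposition~\ref{p:selfdual-closure-description} gives $\pv V=\widetilde{\pv W}=\pv W\vee\pv{Sl}\subseteq\pv{NOCR}$. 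Now Theorem~\ref{t:nocr} decomposes $\pv V=(\pv V\cap\pv{Sl})\vee(\pv V\cap\pv{RB})\vee(\pv V\cap\pv G)$; since $\pv V\supseteq\pv{Sl}$, intersecting with $\pv B$ gives $\pv V\cap\pv B=\pv{Sl}\vee(\pv V\cap\pv{RB})$, which lies in the interval $[\pv{Sl},\pv{NB}]$.

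For the converse, assuming $\pv V\subseteq\pv{NOCR}$ and $\pv V\cap\pv B\in[\pv{Sl},\pv{NB}]$, I would first observe that $\pv V\supseteq\pv{Sl}$, so $\pv V\cap\pv{Sl}=\pv{Sl}$. I would then propose as a non-selfdual generator
$$\pv W=\pv{Sl}^+\vee(\pv V\cap\pv{RB})\vee(\pv V\cap\pv G),$$
which lies in $\pv V$ and contains $U^+$ but not $U^-$ (since no rectangular band or group contains $U^-$), hence is non-selfdual by Theorem~\ref{t:selfduality}. Because the selfdual closure of $\pv{Sl}^+$ is $\pv{Sl}$, Proposition~\ref{p:selfdual-closure-description} gives $\widetilde{\pv W}=\pv W\vee\pv{Sl}=\pv{Sl}\vee(\pv V\cap\pv{RB})\vee(\pv V\cap\pv G)$, which is $\pv V$ by Theorem~\ref{t:nocr}. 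Since $U^-\in\pv{Sl}\subseteq\pv V$ but $U^-\notin\pv W$, the inclusion $\pv W\subsetneq\pv V$ is strict, so $\pv V$ is not order primitive.

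The main point requiring care is the bookkeeping around Proposition~\ref{p:selfdual-closure-description} and Theorem~\ref{t:nocr}: one must verify that the semilattice summand of the witness $\pv W$ is exactly $\pv{Sl}^+$ (so that its selfdual closure supplies the ``missing'' generator $U^-$ needed to reach $\pv V$) and then combine this cleanly with the three-factor decomposition of $\pv V$ within $\pv{NOCR}$ in both directions.
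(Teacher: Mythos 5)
Your proposal is correct and follows essentially the same route as the paper: the forward direction combines Theorem~\ref{t:selfduality}, Lemma~\ref{l:not-above-semilattices} and the selfdual closure description exactly as the paper does, and your witness in the converse, $\pv W=\pv{Sl}^+\vee(\pv V\cap\pv{RB})\vee(\pv V\cap\pv G)$, is precisely the paper's $\pv X\vee(\pv V\cap\pv G)$ with $\pv X=(\pv V\cap\pv B\cap\pv{RB})\vee\pv{Sl}^+$. The only point to tighten is the claim $U^-\notin\pv W$: ``no rectangular band or group contains $U^-$'' does not by itself control divisors of products of the generators, so you should justify it via the isomorphism of Theorem~\ref{t:nocr} (which gives $\pv W\cap\pv{Sl}=\pv{Sl}^+$) or by observing that $\pv W$ satisfies $x^\omega\le x^\omega y^\omega x^\omega$ while $U^-$ does not.
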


\begin{proof}
  Suppose first that $\pv V=\widetilde{\pv W}$ with $\pv V\ne\pv W$.
  In particular, \pv W is not selfdual and so, by Theorem~\ref{t:Sl}
  and Lemma~\ref{l:not-above-semilattices}, we must have $\pv
  V\subseteq\pv{NOCR}$, so that $\pv V\cap\pv B\subseteq\pv{NB}$. If
  \pv{Sl} is not contained in~\pv V, then neither $U^+$ nor $U^-$ can
  belong to~\pv W; by Theorem~\ref{t:selfduality}, it then follows
  that \pv W must be selfdual, which contradicts our assumptions.
  Hence, the inclusion $\pv{Sl}\subseteq\pv V$ holds.

  For the converse, consider the pseudovariety $\pv U=\pv V\cap\pv B$.
  By inspection of Figure~\ref{f:emery-lattice}, the assumption that
  \pv U belongs to the interval $[\pv{Sl},\pv{NB}]$ implies that $\pv
  U=\widetilde{\pv X}$, where $\pv X=(\pv
  U\cap\pv{RB})\vee\pv{Sl}^+$. From Theorem~\ref{t:nocr}, it follows
  that $\pv V=\widetilde{\pv W}$, where $\pv W=\pv X\vee(\pv V\cap\pv
  G)$. Moreover, \pv W is not selfdual by Theorems~\ref{t:nocr}
  and~\ref{t:selfduality}. Hence, \pv V is not order primitive.\qed
\end{proof}

The following result settles one of the problems left open
in~\cite[Table~3]{Almeida&Klima:2015a}.
 
\begin{Cor}
  \label{c:CRcapCn}
  For every $n\ge0$, the pseudovariety $\pv{CR}\cap\pv C_n$ is order
  primitive.
\end{Cor}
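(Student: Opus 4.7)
The plan is to invoke Theorem~\ref{t:order-primitive} directly. By that theorem, a pseudovariety $\pv V\in\Cl L(\pv{CR})$ fails to be order primitive exactly when both $\pv V\subseteq\pv{NOCR}$ and $\pv V\cap\pv B\in[\pv{Sl},\pv{NB}]$. To show that $\pv V=\pv{CR}\cap\pv C_n$ is order primitive, it therefore suffices to verify that at least one of these two conditions fails, and the cleanest route is to show that the second one fails.

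I would begin by computing $\pv V\cap\pv B$. Since every band is aperiodic, we have $\pv B\subseteq\pv A=\pv C_0$, and because the Krohn--Rhodes chain is ascending, $\pv B\subseteq\pv C_0\subseteq\pv C_n$ for every $n\ge0$. Using the identity $\pv{CR}\cap\pv A=\pv B$ recalled just before the statement, it follows that
$$
(\pv{CR}\cap\pv C_n)\cap\pv B=\pv{CR}\cap\pv B\cap\pv C_n=\pv B\cap\pv C_n=\pv B.
$$

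Next I would observe that $\pv B\notin[\pv{Sl},\pv{NB}]$, since $\pv{NB}$ is a proper subpseudovariety of $\pv B$ (there exist bands that do not satisfy $xyzx=xzyx$). Combined with the previous computation, this means that the second condition in Theorem~\ref{t:order-primitive} is violated for $\pv V=\pv{CR}\cap\pv C_n$, so $\pv{CR}\cap\pv C_n$ is order primitive, as required.

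There is no real obstacle: once Theorem~\ref{t:order-primitive} is available, the whole argument rests on the trivial containment $\pv B\subseteq\pv C_n$ and the strict inclusion $\pv{NB}\subsetneq\pv B$. The only subtlety worth flagging is making explicit that $\pv B\cap\pv C_n=\pv B$ for every $n\ge0$, which is immediate from $\pv B\subseteq\pv A=\pv C_0\subseteq\pv C_n$.
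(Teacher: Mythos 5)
Your proof is correct and follows essentially the same route as the paper, which likewise observes that $\pv B\subseteq\pv C_n$ and then applies Theorem~\ref{t:order-primitive}. The only cosmetic difference is that you verify the failure of just the second criterion (which suffices, since non-order-primitivity requires both), while the paper notes that both fail.
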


\begin{proof}
  It suffices to note that $\pv C_n$ contains~\pv B, so that
  $\pv{CR}\cap\pv C_n$ fails both criteria of
  Theorem~\ref{t:order-primitive}.\qed
\end{proof}

The following extends to the ordered case a result of
Pastijn~\cite{Pastijn:1991}.

\begin{Thm}
  \label{t:modular}
  The lattice $\Cl L_o(\pv{CR})$ is modular.
\end{Thm}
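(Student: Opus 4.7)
The plan is to verify the modular identity $(\pv V \vee \pv W) \cap \pv X = \pv V \vee (\pv W \cap \pv X)$ for $\pv V \subseteq \pv X$ by case analysis. The two main ingredients are Pastijn's theorem that $\Cl L(\pv{CR})$ is modular and the modularity of $\Cl L_o(\pv{NOCR})$: the latter is immediate from Theorem~\ref{t:nocr}, since the factors $\Cl L_o(\pv{Sl})$ and $\Cl L(\pv{RB})$ are $4$-element Boolean lattices and $\Cl L(\pv G)$ is modular. The dichotomy of Lemma~\ref{l:not-above-semilattices} -- every pseudovariety lies in $\pv{NOCR}$ or contains $\pv{Sl}$ (hence is selfdual by Theorem~\ref{t:Sl}) -- drives the case split. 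Only the inclusion $(\pv V \vee \pv W) \cap \pv X \subseteq \pv V \vee (\pv W \cap \pv X)$ requires argument.

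First I would split on whether $\pv X \subseteq \pv{NOCR}$. If $\pv X \subseteq \pv{NOCR}$ and $\pv W \subseteq \pv{NOCR}$, both sides lie in $\Cl L_o(\pv{NOCR})$ and the result is immediate from its modularity. If $\pv X \subseteq \pv{NOCR}$ but $\pv W \supseteq \pv{Sl}$, then $\pv V \vee \pv W$ contains $\pv{Sl}$ so is selfdual, hence equals $\widetilde{\pv V} \vee \pv W$; Pastijn's modularity applied to the selfdual triple $(\widetilde{\pv V}, \pv W, \pv{NOCR})$ gives $(\widetilde{\pv V} \vee \pv W) \cap \pv{NOCR} = \widetilde{\pv V} \vee (\pv W \cap \pv{NOCR})$, and intersecting with $\pv X \subseteq \pv{NOCR}$ reduces the identity to a computation in $\Cl L_o(\pv{NOCR})$. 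In the complementary case $\pv X \supseteq \pv{Sl}$, $\pv X$ is selfdual and $\widetilde{\pv V} \subseteq \pv X$; if moreover $\pv V \vee \pv W$ is selfdual (which happens whenever $\pv V$ or $\pv W$ contains $\pv{Sl}$), one has $\pv V \vee \pv W = \widetilde{\pv V} \vee \widetilde{\pv W}$, and Pastijn's modularity yields $(\widetilde{\pv V} \vee \widetilde{\pv W}) \cap \pv X = \widetilde{\pv V} \vee (\widetilde{\pv W} \cap \pv X)$; otherwise $\pv V, \pv W \subseteq \pv{NOCR}$, so $\pv V \vee \pv W \subseteq \pv{NOCR}$ and one passes through the selfdual pseudovariety $\pv X \cap \pv{NOCR}$ to apply modularity of $\Cl L_o(\pv{NOCR})$ with $\pv V \subseteq \pv X \cap \pv{NOCR}$.

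The remaining task in each sub-case is to replace the selfdual-closure expressions by their originals, identifying $\widetilde{\pv V} \vee (\widetilde{\pv W} \cap \pv X)$ with $\pv V \vee (\pv W \cap \pv X)$. This is the main obstacle. The key structural tool is the product decomposition of Theorem~\ref{t:nocr}: since $\Cl L(\pv{RB})$ and $\Cl L_o(\pv G)$ consist entirely of selfdual pseudovarieties, any $\pv Y \in \Cl L_o(\pv{NOCR})$ and its selfdual closure $\widetilde{\pv Y}$ agree on the $\pv{RB}$- and $\pv G$-coordinates, differing only in the $\pv{Sl}$-coordinate. This remaining discrepancy is absorbed into $\pv V \vee (\pv W \cap \pv X)$ in every sub-case where the replacement is needed -- either because $\pv V \supseteq \pv{Sl}$, making $\pv V \vee (\pv W \cap \pv X)$ selfdual and forcing it to contain $\pv V^d$ and $(\pv W \cap \pv X)^d$, or because $\pv W \cap \pv X \supseteq \pv{Sl}$ when $\pv W, \pv X \supseteq \pv{Sl}$, giving the same conclusion.
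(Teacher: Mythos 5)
Your proposal takes a genuinely different route from the paper. The paper argues by contradiction: it supposes a pentagon occurs in $\Cl L_o(\pv{CR})$, locates its vertices relative to the dichotomy of Lemma~\ref{l:not-above-semilattices}, and then replaces the non-selfdual vertices by their intersections with $\pv{RG}=\pv{RB}\vee\pv G$ (i.e., it \emph{deletes} the $\pv{Sl}$-coordinate), checking that a pentagon survives inside the selfdual part, which is isomorphic to $\Cl L(\pv{CR})$ and hence modular by Pastijn. You instead verify the modular law directly, transporting each configuration into one of the two modular sublattices ($\Cl L_o(\pv{NOCR})$ or the selfdual part) by \emph{saturating} the $\pv{Sl}$-coordinate via selfdual closures. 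The external ingredients are the same (Pastijn's theorem, the product decomposition of Theorem~\ref{t:nocr}, the dichotomy, Theorem~\ref{t:Sl}), but your version is computational rather than a forbidden-sublattice argument, and it makes visible exactly where each hypothesis enters.

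There is one step you state too loosely, though it is repairable with tools you already cite. When you replace $\widetilde{\pv V}\vee(\widetilde{\pv W}\cap\pv X)$ by $\pv V\vee(\pv W\cap\pv X)$, selfduality of $\pv V\vee(\pv W\cap\pv X)$ only forces it to contain $(\pv W\cap\pv X)^d$, hence $\widetilde{\pv W\cap\pv X}$; what you need is the a priori larger pseudovariety $\widetilde{\pv W}\cap\pv X$. To close this, invoke Proposition~\ref{p:selfdual-closure-description}: if $\pv W\not\supseteq\pv{Sl}$ then $\pv W\subseteq\pv{NOCR}$ and $\widetilde{\pv W}\subseteq\pv W\vee\pv{Sl}$, so $\widetilde{\pv W}\cap\pv X\subseteq(\pv W\vee\pv{Sl})\cap(\pv X\cap\pv{NOCR})=\pv{Sl}\vee(\pv W\cap\pv X)$ by one more application of modularity of $\Cl L_o(\pv{NOCR})$ (legitimate, not circular, since all three terms lie in that lattice and $\pv{Sl}\subseteq\pv X\cap\pv{NOCR}$); the extra $\pv{Sl}$ is then absorbed by $\pv V$ in the sub-case where $\pv V\supseteq\pv{Sl}$. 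A similar correction is needed in your second case, where the absorber of the discrepancy between $\widetilde{\pv V}$ and $\pv V$ is $\pv W\cap\pv{NOCR}\supseteq\pv{Sl}$ rather than $\pv W\cap\pv X$ (there $\pv X$ need not contain $\pv{Sl}$). With these adjustments the case analysis goes through.
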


\begin{proof} We must show that the pentagon
  \begin{center}
    \begin{tikzpicture}[x=1.5mm,y=2mm,thick]\small %
      \tikzstyle{every node}=[circle, draw, fill=black, inner sep=0pt,
                              minimum width=3pt] %
      \node [label={left: \pv U}] (U) at (-3.85,4.5) {}; %
      \node [label={left: \pv V}] (V) at (-3.85,7.5) {}; %
      \node [label={right: \pv W}] (W) at (2.7,6) {}; %
      \node [label={below: \pv X}] (0) at (0,3.2) {}; %
      \node [label={above: \pv Y}] (1) at (0,8.8) {}; %
      \draw (U) -- (V) -- (1) -- (W) -- (0) -- (U); %
    \end{tikzpicture}
  \end{center}
  does not appear as a sublattice of~$\Cl L_o(\pv{CR})$. Suppose, on
  the contrary, that it does appear. The idea of the proof is to show
  that it is possible to render selfdual all vertices of the pentagon
  still retaining a pentagon, which leads to a contradiction since the
  lattice $\Cl L(\pv{CR})$ is modular
  \cite[Corollary~8]{Pastijn:1991}.

  We first observe that \pv Y cannot belong to~$\Cl L_o(\pv{NOCR})$
  since this lattice is modular as it is isomorphic to a product of
  modular lattices by Theorem~\ref{t:nocr}. Note also that either at
  least one of~\pv U and~\pv V is not selfdual or~\pv W is not
  selfdual, but not both conditions can hold: otherwise, either all
  vertices of the pentagon are selfdual or they all belong to~$\Cl
  L_o(\pv{NOCR})$. Moreover, the (left: \pv U, \pv V; or right: \pv W)
  side of the pentagon where non-selfduality is not present must
  contain~\pv{Sl} while, on the other side, none of the vertices
  contains~\pv{Sl}: first, if on both sides we would have
  pseudovarieties not containing \pv{Sl}, then the pentagon would be
  found within~$\Cl L_o(\pv{NOCR})$ by
  Lemma~\ref{l:not-above-semilattices}; second, if \pv{Sl} is
  contained in at least one pseudovariety on each side, then it is
  contained in~\pv X and the pentagon would be placed in~$\Cl
  L(\pv{CR})$ by Theorem~\ref{t:Sl}. The same argument shows that, \pv
  U is selfdual if and only if so is \pv V. In any case, \pv X cannot
  be selfdual.

  Let $\pv{RG}=\pv{RB}\vee\pv G$, the pseudovariety of all finite
  so-called \emph{rectangular groups}. We replace in the pentagon each
  non-selfdual vertex by its intersection with~\pv{RG}. We show that
  this produces a sublattice of~$\Cl L(\pv{CR})$ which is still a
  pentagon.

  Consider first the case where \pv U, \pv V, and \pv X are not
  selfdual. Then, these three pseudovarieties are replaced by their
  intersections with~\pv{RG}. Doing so, the new bottom pseudovariety
  is still the only intersection of the two new sides. On the other
  hand, as \pv W contains~\pv{Sl}, each of \pv U and \pv V is
  contained in the join with~\pv W of its intersection with~\pv{RG};
  hence, \pv Y remains the only join of the new two sides. Thus, we
  obtain a pentagon, except if there is some side which collapses. The
  only possible collapse would come from the identification of two
  vertices being intersected with~\pv{RG}. But, since \pv U, \pv V,
  and \pv X are not selfdual and form a chain, their intersections
  with~\pv{Sl} must be the same. Hence, by Theorem~\ref{t:nocr}, their
  intersections with~\pv{RG} must remain distinct.

  Finally, consider the case where \pv W is not selfdual. Then, both
  \pv W and \pv X are replaced by their intersections with~\pv{RG}.
  The argument in the preceding paragraph then allows us to show that
  the modified pentagon is still a pentagon. Thus, in all cases, we
  reach the announced contradiction, thereby proving the theorem.\qed
\end{proof}

We next recall the following result of Reilly and Zhang.

\begin{Thm}[\cite{Reilly&Zhang:1997}]
  \label{t:RZ}
  The correspondence $\pv V\mapsto\pv V\cap\pv B$ defines a complete
  endomorphism of the lattice $\Cl L(\pv S)$.
\end{Thm}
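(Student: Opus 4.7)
Meet preservation is immediate: intersection with $\pv B$ commutes with arbitrary intersections, so
$\bigl(\bigcap_i\pv V_i\bigr)\cap\pv B=\bigcap_i(\pv V_i\cap\pv B)$. The work is entirely in join preservation, and since the inclusion $\bigvee_i(\pv V_i\cap\pv B)\subseteq(\bigvee_i\pv V_i)\cap\pv B$ is formal, the goal reduces to showing that every finite band $B\in\bigvee_i\pv V_i$ already belongs to $\bigvee_i(\pv V_i\cap\pv B)$.

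By the definition of the join, a finite band $B\in\bigvee_i\pv V_i$ divides some finite product $T_1\times\cdots\times T_m$ with $T_k\in\pv V_{i_k}$; fix a subsemigroup $T\le\prod_k T_k$ and a surjective homomorphism $\varphi\colon T\twoheadrightarrow B$. For each $k$, let $\beta_k$ denote the least congruence on $T_k$ generated by the pairs $(x,x^2)$ for $x\in T_k$, so that $T_k/\beta_k$ is the maximum band quotient of $T_k$; in particular $T_k/\beta_k$ is a band divisor of $T_k$, hence an element of $\pv V_{i_k}\cap\pv B$. The plan is to prove the key claim that $B$ divides the product $\prod_k(T_k/\beta_k)$; this places $B$ in $\bigvee_k(\pv V_{i_k}\cap\pv B)\subseteq\bigvee_i(\pv V_i\cap\pv B)$ and finishes the argument.

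To prove the key claim, let $\pi\colon\prod_k T_k\to\prod_k(T_k/\beta_k)$ be the product of the quotient maps. Its image $T'=\pi(T)$ is a subsemigroup of a band, hence a band. It suffices to show that $\varphi$ factors through $\pi|_T$, i.e.\ that $\ker(\pi|_T)\subseteq\ker\varphi$. Concretely, given $s,t\in T$ with $s_k\mathrel{\beta_k}t_k$ for every $k$, I must establish $\varphi(s)=\varphi(t)$. Since $B$ is a band, $\varphi$ automatically factors through the maximum band quotient $T/\beta_T$, so the problem is to show that the coordinatewise relation $\prod_k\beta_k$ restricted to $T$ is contained in $\beta_T$ as far as band quotients of $T$ are concerned. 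The idea is to replace $s$ by $s^{\omega}$ and $t$ by $t^{\omega}$ (which is harmless as $\varphi(s)=\varphi(s^\omega)$ and similarly for $t$, since $B$ is a band), and then to exploit that in a finite semigroup every $\beta$-class contains an idempotent and that any two idempotents in the same $\beta$-class are $\mathcal J$-related; Rees matrix coordinates together with the stability of finite semigroups then allow one to build, coordinate by coordinate, a chain of elementary $(x,x^2)$-moves inside $T$ from $s^\omega$ to $t^\omega$, establishing $s\mathrel{\beta_T}t$.

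The main obstacle is precisely this last technical step: lifting the componentwise $\beta$-chains that exist in each $T_k$ into a single chain inside the subsemigroup $T\le\prod_k T_k$, where no identity elements are available to keep other coordinates fixed. I expect this to rely essentially on passage to idempotent transversals and on standard properties of the maximum band congruence (in particular, that it is preserved by surjective homomorphisms and that on a finite semigroup each of its classes has a canonical idempotent representative, reachable from any element via the $\omega$-power). Once this componentwise-to-subsemigroup lifting is justified, the rest is formal and the theorem follows.
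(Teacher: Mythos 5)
Your reduction to join preservation and to showing that every finite band in $\bigvee_i\pv V_i$ lies in $\bigvee_i(\pv V_i\cap\pv B)$ is fine, but the key claim on which everything rests --- that $B$ divides $\prod_k(T_k/\beta_k)$, where $\beta_k$ is the least band congruence on $T_k$ --- is false, and no amount of work on the ``technical step'' you flag can repair it. Concretely, take a single factor $T_1=B_2$, the five-element Brandt semigroup $\{e,f,a,b,0\}$ with $ab=e$, $ba=f$, $a^2=b^2=0$. In any band quotient of $B_2$ one has $[a]=[a]^2=[a^2]=[0]$ and likewise $[b]=[0]$, hence $[e]=[a][b]=[0]$ and $[f]=[0]$: the maximum band quotient $B_2/\beta_1$ is trivial. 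Yet the subsemigroup $T=\{e,0\}$ is a two-element semilattice, so with $B=T$ and $\varphi=\mathrm{id}$ you have a band divisor of $T_1$ that certainly does not divide $T_1/\beta_1$. In the language of your proof, $e\mathrel{\beta_1}0$ but $\varphi(e)\ne\varphi(0)$, so $\ker(\pi|_T)\not\subseteq\ker\varphi$. The underlying obstruction is that band divisors of a finite semigroup are in general not visible in its maximum band quotient: they may arise only as quotients of proper subsemigroups. This is exactly why your plan of ``pushing down'' to band quotients of the factors cannot work, and why the step you defer (lifting coordinatewise $\beta$-chains into $T$) is not merely technical but impossible.

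The actual proof (Reilly and Zhang, recapitulated in ordered form in Section~\ref{sec:applications} via Lemmas~\ref{l:RZ-3.10} and~\ref{l:RZ-3.11} and Theorem~\ref{t:RZ-3.12}) goes in the opposite direction: instead of taking quotients of the factors, one finds a \emph{subsemigroup} $R$ of $T$ that still maps onto the target and whose projections into the $T_k$ land in the pseudovarieties $\pv V_{i_k}$. Even this does not work directly for bands, because the surjection onto a band need not restrict to a surjection from a sub-band of $T$; one must pass through the intermediate class $\pv{DCh}$ of completely regular semigroups whose $\Cl J$-classes form a chain, use the fact that a surjection onto a member of $\pv{DCh}$ restricts to a surjection from a subsemigroup in $\pv{DCh}$, and then invoke the fact that every pseudovariety of bands is generated by its intersection with $\pv{DCh}$. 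Your proposal bypasses all of this machinery, which is precisely the substance of the theorem.
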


As a step in the proof that intersection with \pv B remains a complete
endomorphism of the lattice $\Cl L_o(\pv S)$, we first show that that
is the case for the restriction to~$\Cl L_o(\pv{CR})$.

Since $\pv{NOCR}=\pv{NB}\vee\pv G$ and taking the intersection with
\pv B determines a complete endomorphism of~$\Cl L(\pv S)$, it follows
that $\pv V\cap\pv B=\pv V\cap\pv{NB}$ for every $\pv V\in\Cl
L_o(\pv{NOCR})$. We use this property freely for the rest of the
paper.

\begin{Lemma}
  \label{l:capB-vs-selfdual-closure}
  Let \pv V be a pseudovariety of ordered completely regular
  semigroups. Then, we have $\widetilde{\pv V\cap\pv B}=\widetilde{\pv
    V}\cap\pv B$.
\end{Lemma}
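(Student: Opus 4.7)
The plan is to prove the two inclusions separately, splitting the non-trivial one according to whether $\pv V$ contains all semilattices. The inclusion $\widetilde{\pv V\cap\pv B}\subseteq\widetilde{\pv V}\cap\pv B$ comes for free: the right-hand side is an intersection of two selfdual pseudovarieties, hence selfdual, and it plainly contains $\pv V\cap\pv B$, so it contains the selfdual closure of the latter.

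For the converse inclusion, if $\pv{Sl}\subseteq\pv V$ then Theorem~\ref{t:Sl} ensures that $\pv V$ itself is selfdual, so $\widetilde{\pv V}=\pv V$. Furthermore, $\pv V\cap\pv B$ is again a pseudovariety of ordered completely regular semigroups and it still contains $\pv{Sl}$, so a second application of Theorem~\ref{t:Sl} gives that $\pv V\cap\pv B$ is selfdual, i.e., $\widetilde{\pv V\cap\pv B}=\pv V\cap\pv B$. Hence both sides collapse to $\pv V\cap\pv B$.

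In the remaining case, Lemma~\ref{l:not-above-semilattices} forces $\pv V\subseteq\pv{NOCR}$, and also $\widetilde{\pv V}\subseteq\pv{NOCR}$ since $\pv{NOCR}$ is selfdual, so the lattice isomorphism $\iota$ of Theorem~\ref{t:nocr} is available for both $\pv V$ and $\widetilde{\pv V}$. The key observation, which I expect to be the main technical point, is that duality acts trivially on the factors $\Cl L(\pv{RB})$ and $\Cl L(\pv G)$: every subpseudovariety of $\pv{RB}$ is selfdual by Corollary~\ref{c:L(ONB)}, and finite groups admit only trivial stable orders. Writing $\iota(\pv V)=(a,b,c)$, this yields $\iota(\pv V^d)=(a^d,b,c)$, whence $\iota(\widetilde{\pv V})=(\widetilde a,b,c)$; combined with the identity $\iota(\pv B\cap\pv{NOCR})=\iota(\pv{NB})=(\pv{Sl},\pv{RB},\pv I)$, this gives $\iota(\widetilde{\pv V}\cap\pv B)=(\widetilde a,b,\pv I)$. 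Starting from $\iota(\pv V\cap\pv B)=(a,b,\pv I)$ on the other side, the same componentwise argument yields $\iota(\widetilde{\pv V\cap\pv B})=(\widetilde a,b,\pv I)$, and the injectivity of $\iota$ delivers the required equality.
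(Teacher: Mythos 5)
Your proof is correct. It runs on the same engine as the paper's --- the product decomposition of $\Cl L_o(\pv{NOCR})$ from Theorem~\ref{t:nocr} --- but organizes the argument differently. The paper splits on whether $\pv V$ is selfdual and, in the non-selfdual case, cites Proposition~\ref{p:selfdual-closure-description} to get $\widetilde{\pv V}=\pv V\vee\pv{Sl}$, after which the identity follows from a short chain of equalities whose key step, $(\pv V\cap\pv{NB})\vee\pv{Sl}=(\pv V\vee\pv{Sl})\cap\pv{NB}$, is supplied by the product structure. You instead split on whether $\pv{Sl}\subseteq\pv V$ and, in the negative case, track the three coordinates of $\iota$ explicitly, using that dualization only affects the $\Cl L_o(\pv{Sl})$ coordinate; this in effect re-derives inline the content of Proposition~\ref{p:selfdual-closure-description} rather than invoking it. Your crux, $\iota(\pv V^d)=(a^d,b,c)$, is justified for the reasons you give: every subpseudovariety of ordered rectangular bands is selfdual by Corollary~\ref{c:L(ONB)}, finite groups carry only the trivial stable order, and $\iota$ preserves joins and meets, so injectivity of $\iota$ finishes the job. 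The only cosmetic difference is that your first case (``$\pv{Sl}\subseteq\pv V$'') is narrower than the paper's (``$\pv V$ selfdual''), but your second case correctly absorbs the selfdual pseudovarieties not containing $\pv{Sl}$, so nothing is lost.
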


\begin{proof}
  If \pv V is selfdual, then so is $\pv V\cap\pv B$ and this is the
  value of both sides of the equality in the statement of the lemma.
  If \pv V is not selfdual then $\widetilde{\pv V}$ is contained
  in~$\pv{NOCR}$. By Proposition~\ref{p:selfdual-closure-description},
  since \pv V is not selfdual, we have $\widetilde{\pv V}=\pv
  V\vee\pv{Sl}$. We thus obtain the following chain of equalities:
  $$\widetilde{\pv V\cap\pv B} %
  =\widetilde{\pv V\cap\pv{NB}} %
  =(\pv V\cap\pv{NB})\vee\pv{Sl} %
  =(\pv V\vee\pv{Sl})\cap\pv{NB} %
  =\widetilde{\pv V}\cap\pv{NB} %
  =\widetilde{\pv V}\cap\pv B.\ \qed $$
\end{proof}

\begin{Thm}
  \label{t:capB}
  The correspondence $\pv V\mapsto\pv V\cap\pv B$ defines a complete
  endomorphism of the lattice $\Cl L_o(\pv{CR})$.
\end{Thm}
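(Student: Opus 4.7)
My plan is to show that $\Phi:\pv V\mapsto\pv V\cap\pv B$ preserves arbitrary joins (meet preservation is automatic). Given a family $(\pv V_i)_{i\in I}$ in $\Cl L_o(\pv{CR})$, set $\pv V=\bigvee_i\pv V_i$ and $\pv Y=\bigvee_i(\pv V_i\cap\pv B)$. The inclusion $\pv Y\subseteq\pv V\cap\pv B$ is immediate, so I must prove $\pv V\cap\pv B\subseteq\pv Y$. I split according to whether $\pv V\subseteq\pv{NOCR}$.

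If $\pv V\subseteq\pv{NOCR}$, then every $\pv V_i$ also lies in $\pv{NOCR}$. The lattice isomorphism $\iota$ of Theorem~\ref{t:nocr} preserves arbitrary joins componentwise, so $\pv V\cap\pv{Sl}=\bigvee_i(\pv V_i\cap\pv{Sl})$ and $\pv V\cap\pv{RB}=\bigvee_i(\pv V_i\cap\pv{RB})$. Since for every $\pv W\subseteq\pv{NOCR}$ we have $\pv W\cap\pv B=\pv W\cap\pv{NB}=(\pv W\cap\pv{Sl})\vee(\pv W\cap\pv{RB})$ (the band part of the decomposition, together with the paper's observation that intersecting with~$\pv B$ and with~$\pv{NB}$ agree inside~$\Cl L_o(\pv{NOCR})$), the desired equality $\pv V\cap\pv B=\bigvee_i(\pv V_i\cap\pv B)$ follows at once.

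If instead $\pv V\not\subseteq\pv{NOCR}$, then Lemma~\ref{l:not-above-semilattices} gives $\pv{Sl}\subseteq\pv V$, and Theorem~\ref{t:Sl} makes $\pv V$ and consequently $\pv V\cap\pv B$ selfdual. The crucial observation is that $\pv V\not\subseteq\pv{NOCR}$ is already witnessed by some single $\pv V_{i_0}\not\subseteq\pv{NOCR}$; a second application of Lemma~\ref{l:not-above-semilattices} yields $\pv{Sl}\subseteq\pv V_{i_0}$, hence $\pv{Sl}\subseteq\pv V_{i_0}\cap\pv B\subseteq\pv Y$, and Theorem~\ref{t:Sl} then shows that $\pv Y$ is selfdual as well. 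Combining Lemma~\ref{l:capB-vs-selfdual-closure} used twice, the commutation $\widetilde{\bigvee_i\pv W_i}=\bigvee_i\widetilde{\pv W_i}$, and Theorem~\ref{t:RZ} applied to the unordered pseudovarieties associated to the selfdual closures~$\widetilde{\pv V_i}$, one obtains the chain
\[
\widetilde{\pv V\cap\pv B}=\widetilde{\pv V}\cap\pv B=\Bigl(\bigvee_i\widetilde{\pv V_i}\Bigr)\cap\pv B=\bigvee_i\bigl(\widetilde{\pv V_i}\cap\pv B\bigr)=\bigvee_i\widetilde{\pv V_i\cap\pv B}=\widetilde{\pv Y}.
\]
Selfduality of both endpoints then collapses this chain to $\pv V\cap\pv B=\pv Y$.

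The main obstacle, to my mind, is engineering the right case split. A natural attempt would condition on whether $\pv{Sl}\subseteq\pv V$, but in the affirmative case one would still need to prove that $\pv Y$ itself is selfdual, which reduces to a delicate join-primeness claim for the atoms $\pv{Sl}^\pm$ in $\Cl L_o(\pv{CR})$. Splitting instead on $\pv V\subseteq\pv{NOCR}$ sidesteps this difficulty, because any failure $\pv V\not\subseteq\pv{NOCR}$ must already be witnessed by a single $\pv V_{i_0}$ containing~$\pv{Sl}$, and that single factor alone forces $\pv{Sl}\subseteq\pv Y$ and hence selfduality of~$\pv Y$.
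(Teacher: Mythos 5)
Your proof is correct and follows essentially the same route as the paper's: the case $\pv V\subseteq\pv{NOCR}$ is handled via the product decomposition of Theorem~\ref{t:nocr}, and the remaining case is reduced to the unordered Reilly--Zhang theorem (Theorem~\ref{t:RZ}) through selfdual closures and Lemma~\ref{l:capB-vs-selfdual-closure}, exactly as in the paper (which phrases the case split as ``some $\pv V_i$ contains $\pv{Sl}$'' and uses Proposition~\ref{p:selfdual-closure-description} where you use the commutation of selfdual closure with joins --- an equivalent bookkeeping).
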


\begin{proof}
  Obviously, taking intersection with \pv B preserves arbitrary
  intersections. Consider a family $(\pv V_i)_{i\in I}$ of members
  of~$\Cl L_o(\pv{CR})$.
  If none of the $\pv V_i$ contains~\pv{Sl}, then they all belong
  to~$\Cl L_o(\pv{NOCR})$ by~Lemma~\ref{l:not-above-semilattices};
  hence, in this case, the result follows from the fact that
  intersection with~\pv{NB} defines a complete endomorphism of~$\Cl
  L_o(\pv{NOCR})$ by Theorem~\ref{t:nocr}. It remains to consider the
  case where at least one of the $\pv V_i$ contains~\pv{Sl}. In such a
  case, by Proposition~\ref{p:selfdual-closure-description}, we have %
  $\bigvee_{i\in I}\pv V_i %
  =\bigvee_{i\in I}\widetilde{\pv V_i}$ and %
  $\bigvee_{i\in I}(\pv V_i\cap\pv B) %
  =\bigvee_{i\in I}\widetilde{\pv V_i\cap\pv B}$. %
  On the other hand, by Lemma~\ref{l:capB-vs-selfdual-closure} and
  Theorem~\ref{t:RZ} we obtain the equalities %
  $\bigvee_{i\in I}\widetilde{\pv V_i\cap\pv B} %
  =\bigvee_{i\in I}(\widetilde{\pv V_i}\cap\pv B) %
  =(\bigvee_{i\in I}\widetilde{\pv V_i})\cap\pv B $. %
  Combining the two preceding observations, we obtain the desired
  equality $\bigvee_{i\in I}(\pv V_i\cap\pv B)=(\bigvee_{i\in I}\pv
  V_i)\cap\pv B$.\qed
\end{proof}

Next, we use the method of Reilly and Zhang~\cite{Reilly&Zhang:1997}
to produce complete endomorphisms of the lattice $\Cl L_o(\pv S)$.
Following the terminology in that paper, we denote by \pv{DCh} the
class of finite completely regular semigroups $S$ such that the
partially ordered set $S/\Cl J$ is a chain. The next two lemmas are
borrowed from~\cite{Reilly&Zhang:1997}. The first is the core of the
argument.

\begin{Lemma}[{\cite[Lemma~3.10]{Reilly&Zhang:1997}}]
  \label{l:RZ-3.10}
  Let $S\in\pv{DCh}$, $T\in\pv S$, and $\varphi:T\to S$ be a
  surjective homomorphism. Then there exists a subsemigroup $R$ of~$T$
  such that $R\in\pv{DCh}$ and the restriction $\varphi|_R$ is
  surjective.
\end{Lemma}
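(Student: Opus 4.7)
The plan is to proceed by induction on the number $n$ of $\mathcal{J}$-classes of $S$. In the base case $n=1$, the semigroup $S$ is completely simple and in particular simple, so I would take $R = K(T)$, the minimum ideal of $T$; this is itself completely simple (hence in \pv{DCh}), and $\varphi(K(T))$ is a nonempty ideal of the simple $S$, hence equals $S$.

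For the inductive step, enumerate the $\mathcal{J}$-classes of $S$ as $J_1 > J_2 > \cdots > J_n$, let $S' = J_2 \cup \cdots \cup J_n$ (an ideal of $S$ lying in \pv{DCh} with $n-1$ classes), and set $T' = \varphi^{-1}(S')$, an ideal of $T$. Since every $\mathcal{J}$-class of a completely regular semigroup is itself a subsemigroup, $J_1$ is a subsemigroup of $S$, and consequently $\varphi^{-1}(J_1) = T \setminus T'$ is a subsemigroup of $T$; its minimum ideal $C := K(\varphi^{-1}(J_1))$ is then a completely simple subsemigroup of $T$ with $\varphi(C)$ a nonempty ideal of the simple $J_1$, so $\varphi(C) = J_1$. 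Applying the inductive hypothesis to $\varphi|_{T'}\colon T' \to S'$ produces some $R' \subseteq T'$ in \pv{DCh} with $\varphi(R') = S'$. The aim is then to combine these into $R = C \cup R'$, for which $\varphi(R) = J_1 \cup S' = S$. Assuming that $R$ is a subsemigroup, membership $R \in \pv{DCh}$ follows routinely: $R$ is completely regular as both $C$ and $R'$ are, and since $C \cap T' = \emptyset$ with $T'$ an ideal of $T$, the elements of $C$ form a single $\mathcal{J}^R$-class strictly above every $\mathcal{J}^R$-class of $R'$, while the restriction $\mathcal{J}^R|_{R'}$ is a semilattice congruence on the chain $R'/\mathcal{J}^{R'}$, whose quotient is again a chain.

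The hard part will be to arrange that $R$ is actually closed under multiplication, which reduces to the $C$-invariance conditions $C R' \cup R' C \subseteq R'$; this need not hold for the $R'$ initially supplied by the inductive hypothesis, since a priori $C R'$ and $R'C$ lie only in the ideal $T'$. To overcome this, I would iterate: starting with $R'_0 = R'$, at each step form the subsemigroup $\langle C \cup R'_k \rangle$ of $T$ and intersect with $T'$ to obtain a $C$-invariant subsemigroup $U'_k \subseteq T'$ containing $R'_k$ and still surjecting onto $S'$; then reapply the inductive hypothesis inside $U'_k$ to extract $R'_{k+1} \in \pv{DCh}$ with $\varphi(R'_{k+1}) = S'$. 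The descending chain $U'_0 \supseteq U'_1 \supseteq \cdots$ in the finite semigroup $T'$ must stabilize. The delicate point, and the main obstacle of the proof, is to arrange that at stabilization the output $R'$ is simultaneously in \pv{DCh} and $C$-invariant; one natural way is to make each $R'_k$ a \emph{maximal} \pv{DCh}-subsemigroup of $U'_{k-1}$ surjecting onto $S'$, so that at the stabilization step the $C$-invariant hull of $R'$ in $T'$ is forced to coincide with $R'$ itself, yielding $R = C \cup R'$ as the desired subsemigroup of $T$ in \pv{DCh}.
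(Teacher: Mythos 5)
The paper does not actually prove this lemma---it is quoted verbatim from Reilly and Zhang with a citation---so your attempt can only be judged on its own merits. Your base case is fine, and the overall plan (induction on the number of $\mathcal{J}$-classes, splitting off a completely simple piece via a kernel) is the right kind of argument. The problem is exactly where you say it is, and your proposed repair does not close it. At stabilization you know only that $U'_{k+1}=\langle C\cup R'_{k+1}\rangle\cap T'=U'_k$, i.e.\ that the $C$-invariant hull of $R'_{k+1}$ inside $T'$ is $U'_k$. For $C\cup R'_{k+1}$ to be a subsemigroup you would need that hull to equal $R'_{k+1}$ itself, and maximality of $R'_{k+1}$ among $\pv{DCh}$-subsemigroups of $U'_k$ surjecting onto $S'$ gives no such thing, since there is no reason for $U'_k$ to lie in $\pv{DCh}$ or for the hull of a maximal $\pv{DCh}$-subsemigroup to stay inside it. So the inductive step is genuinely incomplete: the difficulty is that you fix $C$ first and then have no control over whether the $R'$ handed to you by the induction hypothesis is $C$-invariant.

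The obstruction disappears if you run the induction from the bottom of the chain instead of the top. Write $S/\mathcal{J}$ as $J_1>\cdots>J_n$; since $S$ is a chain of completely simple semigroups, $S''=S\setminus J_n$ is a \emph{subsemigroup} with $n-1$ classes, so the induction hypothesis applied to $\varphi^{-1}(S'')\to S''$ yields $R''\in\pv{DCh}$ with $\varphi(R'')=S''$. Now let $B=\varphi^{-1}(J_n)$, an ideal of $T$, so that $N=R''\cup B$ is a subsemigroup mapping onto $S$, and take $D=K(N)$. Then $D$ is completely simple, $D\subseteq B$, $\varphi(D)$ is an ideal of $S$ contained in $J_n$, hence equals $J_n$, and---this is the point---$D$ is an \emph{ideal} of $N$, so $R''D\cup DR''\subseteq D$ automatically and $R=R''\cup D$ is a subsemigroup with no invariance condition to arrange. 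Checking $R\in\pv{DCh}$ then goes through just as in your closing argument. In short: choosing the completely simple piece \emph{last}, as a kernel relative to the already-constructed $R''$, is what makes closure under multiplication come for free; choosing it first, as you do, is what creates the gap.
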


For a class \Cl C of finite ordered semigroups, denote by $\langle\Cl
C\rangle_o$ the pseudovariety of ordered semigroups generated by~\Cl
C.

\begin{Lemma}[see {\cite[Lemma~3.11]{Reilly&Zhang:1997}}]
  \label{l:RZ-3.11}
  Let $\pv U,\pv V\in\Cl L_o(\pv S)$. If $S\in\pv{DCh}$ is such
  that $S\in\pv U\vee\pv V$, then $S\in\langle\pv
  U\cap\pv{DCh}\rangle_o\vee\langle\pv V\cap\pv{DCh}\rangle_o$.
\end{Lemma}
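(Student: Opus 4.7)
My plan is to mimic the Reilly--Zhang argument, showing that the order-structure essentially comes along for free once Lemma~\ref{l:RZ-3.10} is in hand. Since $S\in\pv U\vee\pv V$ in $\Cl L_o(\pv S)$, I would begin by choosing ordered semigroups $A\in\pv U$ and $B\in\pv V$, an ordered subsemigroup $T$ of the product $A\times B$ (with coordinatewise order), and a surjective isotone homomorphism $\varphi:T\to S$. This is the standard reformulation of membership in a join of pseudovarieties of ordered semigroups.

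The next step is to apply Lemma~\ref{l:RZ-3.10} to the underlying (unordered) surjection $\varphi:T\to S$, producing a subsemigroup $R\leq T$ with $R\in\pv{DCh}$ and $\varphi|_R$ still surjective. I would then equip $R$ with the order inherited from $T$, so that $\varphi|_R$ is a surjective homomorphism of ordered semigroups. Writing $\pi_A,\pi_B$ for the projections of $A\times B$ onto $A$ and $B$, I set $R_A=\pi_A(R)$ and $R_B=\pi_B(R)$. Being ordered subsemigroups of $A$ and $B$, they belong to $\pv U$ and $\pv V$ respectively; being isotone surjective images of $R\in\pv{DCh}$, they also belong to $\pv{DCh}$ (since the chain property of $R/\Cl J$ passes to any quotient: a pair of $\Cl J$-classes of the image lifts to a pair of $\Cl J$-classes of $R$, which are comparable, and that comparability is preserved by the surjection). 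Thus $R_A\in\pv U\cap\pv{DCh}$ and $R_B\in\pv V\cap\pv{DCh}$.

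Finally, since $R$ embeds as an ordered subsemigroup of $R_A\times R_B$ via $r\mapsto(\pi_A(r),\pi_B(r))$, we obtain $R\in\langle\pv U\cap\pv{DCh}\rangle_o\vee\langle\pv V\cap\pv{DCh}\rangle_o$. Because $S$ is an ordered homomorphic image of $R$ via $\varphi|_R$, the same membership follows for $S$, completing the argument.

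The only nontrivial point is the input Lemma~\ref{l:RZ-3.10}, which is an unordered statement; the main potential obstacle would be any loss of the order structure when restricting to $R$. This is harmless here because $R$ inherits the order from $T$ and all subsequent maps (the projections $\pi_A|_R$, $\pi_B|_R$ and the restriction $\varphi|_R$) are automatically isotone. The verification that $\pv{DCh}$ is stable under taking homomorphic images is the other small technicality to record, but it is straightforward from the transitivity of the $\le_{\Cl J}$ quasiorder.
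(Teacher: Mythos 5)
Your argument is correct and is exactly the argument the paper has in mind: the paper's proof simply states that the Reilly--Zhang proof of their Lemma~3.11 carries over unchanged as an application of Lemma~\ref{l:RZ-3.10}, and your write-up is the explicit version of that transfer (restrict to the subsemigroup $R$ given by Lemma~\ref{l:RZ-3.10}, note that the inherited order makes all the relevant maps isotone, and project onto the two factors). The two supporting observations you flag --- that $R$ embeds as an ordered subsemigroup of $\pi_A(R)\times\pi_B(R)$ and that $\pv{DCh}$ is closed under homomorphic images --- are both verified correctly.
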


\begin{proof}
  The proof of \cite[Lemma~3.11]{Reilly&Zhang:1997} carries through
  unchanged to the ordered case; it is a simple application of
  Lemma~\ref{l:RZ-3.10}.\qed
\end{proof}

We may now state the ordered version of the main result
of~\cite[Theorem~3.12]{Reilly&Zhang:1997}.

\begin{Thm}
  \label{t:RZ-3.12}
  Let $\pv W\in\Cl L_o(\pv{CR})$ have the following properties:
  \begin{enumerate}[(i)]
  \item\label{item:RZ-3.12-1} the correspondence $\pv V\mapsto\pv
    V\cap\pv W$ defines an endomorphism of the lattice $\Cl
    L_o(\pv{CR})$;
  \item\label{item:RZ-3.12-2} if\/ $\pv V\in\Cl L_o(\pv W)$ then we
    have $\pv V=\langle\pv V\cap\pv{DCh}\rangle_o$.
  \end{enumerate}
  Then the correspondence $\pv V\mapsto\pv V\cap\pv W$ defines a complete
  endomorphism of the lattice $\Cl L_o(\pv S)$.
\end{Thm}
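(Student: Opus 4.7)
The plan is to mirror the strategy of Reilly and Zhang~\cite{Reilly&Zhang:1997}, using hypothesis~(ii) to reduce an arbitrary intersection $\pv W\cap\bigvee_i\pv V_i$ to a join of intersections indexed by its $\pv{DCh}$-members, to which Lemma~\ref{l:RZ-3.11} applies, and then to return to $\Cl L_o(\pv{CR})$ where hypothesis~(i) can be used. The correspondence $\pv V\mapsto\pv V\cap\pv W$ trivially preserves arbitrary meets, so the only task is to verify that, for any family $(\pv V_i)_{i\in I}$ in $\Cl L_o(\pv S)$, one has $\bigl(\bigvee_{i\in I}\pv V_i\bigr)\cap\pv W\subseteq\bigvee_{i\in I}(\pv V_i\cap\pv W)$; the reverse inclusion is immediate.

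Set $\pv U=\pv W\cap\bigvee_{i\in I}\pv V_i$. Since $\pv U\in\Cl L_o(\pv W)$, hypothesis~(ii) gives $\pv U=\langle\pv U\cap\pv{DCh}\rangle_o$, so it suffices to show that every $T\in\pv U\cap\pv{DCh}$ already belongs to $\bigvee_{i\in I}(\pv V_i\cap\pv W)$. Fix such a $T$; being finite and lying in $\bigvee_{i\in I}\pv V_i$, it belongs to some finite join $\pv V_{i_1}\vee\cdots\vee\pv V_{i_n}$. The key step is then to iterate Lemma~\ref{l:RZ-3.11}: by induction on $n$, whenever $T\in\pv{DCh}$ and $T\in\pv U_1\vee\cdots\vee\pv U_n$, one has $T\in\langle\pv U_1\cap\pv{DCh}\rangle_o\vee\cdots\vee\langle\pv U_n\cap\pv{DCh}\rangle_o$. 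The inductive step uses Lemma~\ref{l:RZ-3.11} on the bracketing $\pv U_1\vee(\pv U_2\vee\cdots\vee\pv U_n)$ and the observation that $\langle(\pv U_2\vee\cdots\vee\pv U_n)\cap\pv{DCh}\rangle_o=\langle\pv U_2\cap\pv{DCh}\rangle_o\vee\cdots\vee\langle\pv U_n\cap\pv{DCh}\rangle_o$, itself a consequence of the inductive hypothesis applied to each $\pv{DCh}$-member of $\pv U_2\vee\cdots\vee\pv U_n$.

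Applying this to $T$, and using that $\pv{DCh}\subseteq\pv{CR}$, we obtain
\[
T\in\bigvee_{j=1}^n\langle\pv V_{i_j}\cap\pv{DCh}\rangle_o
 \subseteq\bigvee_{j=1}^n(\pv V_{i_j}\cap\pv{CR})
 \subseteq\bigvee_{i\in I}(\pv V_i\cap\pv{CR}).
\]
Thus $\pv U\cap\pv{DCh}\subseteq\bigvee_{i\in I}(\pv V_i\cap\pv{CR})$, and consequently $\pv U\subseteq\bigvee_{i\in I}(\pv V_i\cap\pv{CR})$. Now the family $(\pv V_i\cap\pv{CR})_{i\in I}$ lies in $\Cl L_o(\pv{CR})$, where hypothesis~(i) provides
\[
\Bigl(\bigvee_{i\in I}(\pv V_i\cap\pv{CR})\Bigr)\cap\pv W
=\bigvee_{i\in I}\bigl((\pv V_i\cap\pv{CR})\cap\pv W\bigr)
=\bigvee_{i\in I}(\pv V_i\cap\pv W),
\]
the last equality holding because $\pv W\subseteq\pv{CR}$. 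Combining $\pv U\subseteq\bigvee_{i\in I}(\pv V_i\cap\pv{CR})$ with $\pv U\subseteq\pv W$ yields $\pv U\subseteq\bigvee_{i\in I}(\pv V_i\cap\pv W)$, which is exactly the needed inclusion.

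The main obstacle I expect is the extension of Lemma~\ref{l:RZ-3.11} from two summands to an arbitrary finite join: passing from the binary lemma to the $n$-ary version requires both Lemma~\ref{l:RZ-3.11} and a careful identification of the $\pv{DCh}$-trace of a finite join, which is not quite formal and needs the inductive hypothesis. Everything else is bookkeeping: condition~(ii) reduces the problem from $\pv U$ to its $\pv{DCh}$-members, the iterated lemma transfers each such member into $\Cl L_o(\pv{CR})$, and condition~(i) then collapses the join of intersections back into the desired form.
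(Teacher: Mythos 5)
Your proposal is essentially the Reilly--Zhang argument that the paper's proof merely cites (``the proof of Theorem~3.12 carries over unchanged''), written out in full: reduce via hypothesis~(ii) to the \pv{DCh}-members of $\pv W\cap\bigvee_i\pv V_i$, transfer each such member into $\Cl L_o(\pv{CR})$ by an iterated Lemma~\ref{l:RZ-3.11}, and collapse with hypothesis~(i). The iteration of Lemma~\ref{l:RZ-3.11} to $n$ summands is handled correctly (for the inductive step you only need the inclusion $\langle(\pv U_2\vee\cdots\vee\pv U_n)\cap\pv{DCh}\rangle_o\subseteq\bigvee_{j\ge2}\langle\pv U_j\cap\pv{DCh}\rangle_o$, which your inductive hypothesis gives), and the reduction of a finite $T$ to a finite sub-join $\pv V_{i_1}\vee\cdots\vee\pv V_{i_n}$ is exactly the mechanism the paper delegates to the order analog of \cite[Lemma~3.2]{Reilly&Zhang:1997}.

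The one step that does not follow as written is the displayed application of hypothesis~(i) to the \emph{infinite} family $(\pv V_i\cap\pv{CR})_{i\in I}$: hypothesis~(i) only asserts that $\pv V\mapsto\pv V\cap\pv W$ is a lattice endomorphism of $\Cl L_o(\pv{CR})$, hence preserves finite joins, so the equality $\bigl(\bigvee_{i\in I}(\pv V_i\cap\pv{CR})\bigr)\cap\pv W=\bigvee_{i\in I}(\pv V_i\cap\pv W)$ is not directly available. The repair is already implicit in your own setup: instead of first aggregating over all $T$ and then intersecting with $\pv W$, apply (i) for each fixed $T$ to the finite join, obtaining $T\in\pv W\cap\bigvee_{j=1}^n(\pv V_{i_j}\cap\pv{CR})=\bigvee_{j=1}^n(\pv V_{i_j}\cap\pv W)\subseteq\bigvee_{i\in I}(\pv V_i\cap\pv W)$, and only then take the union over $T$ and use (ii). (Equivalently, note that any join of pseudovarieties is the directed union of its finite sub-joins, so a finitary endomorphism of this kind is automatically complete; that is precisely the content of the Lemma~3.2 step the paper invokes.) With this reordering your argument is complete and matches the paper's intended proof.
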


\begin{proof}
  Taking into account Lemma~\ref{l:RZ-3.11}, the proof
  of~\cite[Theorem~3.12]{Reilly&Zhang:1997} carries over unchanged to
  the ordered case to show that the correspondence $\pv V\mapsto\pv
  V\cap\pv W$ defines an endomorphism of the lattice $\Cl L_o(\pv S)$.
  That it is actually a complete endomorphism follows
  from the order analog of~\cite[Lemma~3.2]{Reilly&Zhang:1997}, whose
  proof requires no essential changes.\qed
\end{proof}

The following in an immediate application of Theorems~\ref{t:capB}
and~\ref{t:RZ-3.12}.

\begin{Cor}
  \label{c:capB}
  The correspondence $\pv V\mapsto\pv V\cap\pv B$ defines a complete
  endomorphism of the lattice $\Cl L_o(\pv S)$.
\end{Cor}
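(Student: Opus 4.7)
The plan is to deduce the corollary from Theorem~\ref{t:RZ-3.12} applied with $\pv W=\pv B$. Two hypotheses need to be verified.

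Hypothesis~(i), that $\pv V\mapsto\pv V\cap\pv B$ defines an endomorphism of $\Cl L_o(\pv{CR})$, is given for free by Theorem~\ref{t:capB}, which in fact yields the complete endomorphism property restricted to $\Cl L_o(\pv{CR})$ and, a fortiori, that we have an endomorphism.

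Hypothesis~(ii), that every $\pv V\in\Cl L_o(\pv B)$ satisfies $\pv V=\langle \pv V\cap\pv{DCh}\rangle_o$, I would verify by splitting into two cases according to whether $\pv V$ is selfdual. If $\pv V$ is selfdual, it is determined by an underlying pseudovariety of unordered bands, and the corresponding unordered statement—every pseudovariety of bands is generated by its $\pv{DCh}$-members—is part of the ingredient already used by Reilly and Zhang in proving Theorem~\ref{t:RZ}. The ordered version then follows because, for a selfdual pseudovariety, every ordered member $(S,\cdot,\le)\in\pv V$ is an isotone image of $(S,\cdot,=)$, which in turn is a subdirect product of $\pv{DCh}$-members already lying in $\pv V$. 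If instead $\pv V$ is not selfdual, then Lemma~\ref{l:not-above-semilattices} forces $\pv V\subseteq\pv{NOCR}\cap\pv B=\pv{NB}$, and Emery's Proposition~\ref{p:emery} together with Corollary~\ref{c:L(ONB)} reduces the verification to checking the four minimal ordered-band generators $U^+,U^-,L,R$. Each of them evidently lies in $\pv{DCh}$: $U^+$ and $U^-$ are two-element ordered semilattices (a chain of two $\Cl J$-classes), while $L$ and $R$ are single-$\Cl J$-class rectangular bands.

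Once both hypotheses are in place, Theorem~\ref{t:RZ-3.12} delivers the corollary directly. The main obstacle, such as it is, will be making rigorous the passage from the unordered Reilly--Zhang assertion to its ordered analogue in the selfdual case; I expect this to be routine once one unwinds that selfdual pseudovarieties are closed under replacing any stable partial order by equality and then extending it again, so ordered $\pv{DCh}$-generation reduces cleanly to its unordered counterpart.
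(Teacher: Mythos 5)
Your proposal is correct and follows essentially the same route as the paper: hypothesis (i) comes from Theorem~\ref{t:capB}, and hypothesis (ii) is checked by noting it is known for selfdual $\pv V$ and, for the non-selfdual ones (which lie in $\Cl L_o(\pv{NB})$ by Theorem~\ref{t:Sl} and Lemma~\ref{l:not-above-semilattices}), by exhibiting generating sets drawn from $\{U^+,U^-,L,R\}\subseteq\pv{DCh}$ via Corollary~\ref{c:L(ONB)}. The paper's version is merely terser, pointing to Figure~\ref{f:emery-lattice} and the generators already produced in Section~\ref{s:nocr} rather than spelling out the reduction.
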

 
\begin{proof}
  It only remains to verify the hypothesis (\ref{item:RZ-3.12-2}) of
  Theorem~\ref{t:RZ-3.12}, namely that $\pv V=\langle\pv
  V\cap\pv{DCh}\rangle_o$ for every~$\pv V\in\Cl L_o(\pv B)$. This is
  well known in case \pv V is selfdual. For each of the eight
  non-selfdual $\pv V\in\Cl L_o(\pv B)$, which appear in
  Figure~\ref{f:emery-lattice}, we gave in Section~\ref{s:nocr}
  generating sets consisting of elements of~\pv{DCh}, namely a
  suitable subset of the set $\{U^+,U^-,L,R\}$.\qed
\end{proof}

Adopting the terminology
of~\cite[Definition~6.1.5]{Rhodes&Steinberg:2009qt}, we say that an
element $a$ of a lattice is strictly finite join irreducible (sfji) if
$a=b\vee c$ implies $a=b$ or $a=c$; and we say that $a$ is finite join
irreducible (fji) if $a\le b\vee c$ implies $a\le b$ or $a\le c$.

The following result solves a problem left open
in~\cite[Table~3]{Almeida&Klima:2015a}.

\begin{Cor}
  \label{c:B-fji}
  The pseudovariety \pv B is fji in the lattice $\Cl L_o(\pv S)$.
\end{Cor}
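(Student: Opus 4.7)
The plan is to combine Corollary~\ref{c:capB} with the classical fact that $\pv B$ is strictly finite join irreducible (sfji) in the band lattice $\Cl L(\pv B)$, reducing the ordered problem to the unordered one via selfdual closures. Suppose $\pv B\subseteq\pv V\vee\pv W$ for some $\pv V,\pv W\in\Cl L_o(\pv S)$. Applying Corollary~\ref{c:capB}, we have
$$\pv B=\pv B\cap(\pv V\vee\pv W)=(\pv B\cap\pv V)\vee(\pv B\cap\pv W).$$
Writing $\pv U_1=\pv B\cap\pv V$ and $\pv U_2=\pv B\cap\pv W$, it suffices to prove that $\pv U_1\vee\pv U_2=\pv B$ with $\pv U_1,\pv U_2\in\Cl L_o(\pv B)$ forces $\pv U_1=\pv B$ or $\pv U_2=\pv B$, since this is equivalent to $\pv B\subseteq\pv V$ or $\pv B\subseteq\pv W$.

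Passing to selfdual closures and using that duality distributes over joins (so that $\widetilde{\pv U_1\vee\pv U_2}=\widetilde{\pv U_1}\vee\widetilde{\pv U_2}$), we obtain
$$\widetilde{\pv U_1}\vee\widetilde{\pv U_2}=\widetilde{\pv B}=\pv B,$$
with $\widetilde{\pv U_1},\widetilde{\pv U_2}$ now selfdual pseudovarieties of ordered bands, hence corresponding to members of $\Cl L(\pv B)$. The classical sfji of $\pv B$ in $\Cl L(\pv B)$ -- a consequence of the Birjukov--Fennemore--Gerhard stratification of the band lattice by finite levels each closed under binary joins -- allows us to assume, after swapping the indices if necessary, that $\widetilde{\pv U_1}=\pv B$.

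To conclude that $\pv U_1=\pv B$, we appeal to Ku\v ril's dichotomy recalled in the introduction: every non-selfdual pseudovariety of ordered bands is contained in $\pv{NB}$. If $\pv U_1$ were not selfdual, then both $\pv U_1$ and $\pv U_1^d$ would lie in $\pv{NB}$, forcing $\widetilde{\pv U_1}=\pv U_1\vee\pv U_1^d\subseteq\pv{NB}\subsetneq\pv B$, contradicting $\widetilde{\pv U_1}=\pv B$. Hence $\pv U_1$ is selfdual and $\pv U_1=\widetilde{\pv U_1}=\pv B$, whence $\pv B\subseteq\pv V$, as required. The only substantive external ingredient is the classical sfji of $\pv B$ in $\Cl L(\pv B)$; everything else is a direct application of Corollary~\ref{c:capB} together with Ku\v ril's structural dichotomy, and I expect the selfdual-closure bookkeeping in the middle paragraph to be the most delicate step.
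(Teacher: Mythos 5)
Your proof is correct, and its skeleton is the same as the paper's: reduce finite join irreducibility to strict finite join irreducibility by using Corollary~\ref{c:capB} to distribute $\pv B\cap(-)$ over the join, and then verify that \pv B is sfji among ordered band pseudovarieties. Where you differ is in how the sfji step is justified: the paper simply cites Ku\v ril's full description of $\Cl L_o(\pv B)$, whereas you reduce to the classical unordered fact that \pv B is sfji in $\Cl L(\pv B)$ by passing to selfdual closures (using that $\widetilde{\pv U_1\vee\pv U_2}=\widetilde{\pv U_1}\vee\widetilde{\pv U_2}$ and that selfdual pseudovarieties of ordered bands form a sublattice isomorphic to $\Cl L(\pv B)$) and then descending from $\widetilde{\pv U_1}=\pv B$ to $\pv U_1=\pv B$ via Ku\v ril's dichotomy that every non-selfdual ordered band pseudovariety lies in $\pv{NB}$. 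This buys a slightly more self-contained argument, since it needs only the dichotomy (already recalled in the introduction) plus the Birjukov--Fennemore--Gerhard fact, rather than the complete lattice $\Cl L_o(\pv B)$; the bookkeeping with selfdual closures that you flag as delicate is in fact sound.
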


\begin{proof}
  It is easy to see that, if \pv V is sfji and intersection with \pv V
  distributes over finite joins, then \pv V is fji. On the other hand,
  it follows from Ku\v ril's characterization of~$\Cl L_o(\pv B)$
  \cite{Kuril:2015} that \pv B is sfji in $\Cl L_o(\pv S)$.\qed
\end{proof}

\section{Final remarks}
\label{sec:final-remarks}

Although we have dealt in this paper mostly with (pro)finite
semigroups, Theorems~\ref{t:main-above-sl}, \ref{t:main-nocr},
and~\ref{t:main-modular} extend with somewhat simpler proofs to the
case of varieties of ordered completely regular semigroups. In
particular, there is no need to take care of reducing to the locally
finite case in the proof of the variety analog of
Theorem~\ref{t:main-above-sl}.

It would of interest to extend our results to the non-regular analog
of finite completely regular semigroups, namely the finite semigroups
in which every regular element lies in a group. They form a
pseudovariety which is commonly denoted \pv{DS} in the literature and
that was first considered by Sch\"utzenberger
\cite{Schutzenberger:1976} in connection with formal language theory.
By Green's Lemmas, \pv{DS} may alternatively be characterized as
consisting of all finite semigroups in which regular \Cl J-classes are
subsemigroups. The class \pv{DO} of all finite semigroups whose
regular \Cl J-classes form orthodox subsemigroups may be easier to
handle as it is much better understood \cite{Almeida:1996c}. Because
of connections with logic and formal language theory
\cite{Tesson&Therien:2002,Kufleitner&Weil:2009}, it is of particular
interest to consider the pseudovariety \pv{DA}, which consists of all
finite semigroups whose regular elements are idempotents. It is shown
in~\cite[Table~3]{Almeida&Klima:2015a} that $\pv{DS}\cap\overline{\pv
  H}$ is fji in $\Cl L_o(\pv S)$, where \pv H is an arbitrary
nontrivial pseudovariety of groups; in contrast, it is not known
whether the same holds for $\pv{DO}\cap\overline{\pv H}$ and
for~\pv{DA}. The lack of torsion makes the approach
of~\cite{Almeida&Klima:2015a} fail for subpseudovarieties of~\pv{DO}
but may facilitate the extension of the methods in the present paper.

\bibliographystyle{spmpsci}
\bibliography{OrderedCR-arxiv}

\begin{thebibliography}{10}
\providecommand{\url}[1]{{#1}}
\providecommand{\urlprefix}{URL }
\expandafter\ifx\csname urlstyle\endcsname\relax
  \providecommand{\doi}[1]{DOI~\discretionary{}{}{}#1}\else
  \providecommand{\doi}{DOI~\discretionary{}{}{}\begingroup
  \urlstyle{rm}\Url}\fi

\bibitem{Almeida:1994a}
Almeida, J.: Finite Semigroups and Universal Algebra.
\newblock World Scientific, Singapore (1995).
\newblock {E}nglish translation

\bibitem{Almeida:1996c}
Almeida, J.: A syntactical proof of locality of {DA}.
\newblock Int. J. Algebra Comput. \textbf{6}, 165--177 (1996)

\bibitem{Almeida:2003cshort}
Almeida, J.: Profinite semigroups and applications.
\newblock In: V.B. Kudryavtsev, I.G. Rosenberg (eds.) Structural theory of
  automata, semigroups and universal algebra, pp. 1--45. Springer, New York
  (2005)

\bibitem{Almeida&Klima:2015a}
Almeida, J., Kl{\'\i}ma, O.: Representations of relatively free profinite
  semigroups, irreducibility, and order primitivity.
\newblock Tech. rep., Univ. Masaryk and Porto (2015).
\newblock ArXiv:1509.01389, submitted

\bibitem{Almeida&Klima:2017a}
Almeida, J., Kl{\'\i}ma, O.: Towards a pseudoequational proof theory.
\newblock Tech. rep., Univ. Masaryk and Porto (2017).
\newblock ArXiv:1708.09681, to appear in Portugal. Math.

\bibitem{Almeida&Trotter:1999a}
Almeida, J., Trotter, P.G.: Hyperdecidability of pseudovarieties of
  orthogroups.
\newblock Glasgow Math. J. \textbf{43}, 67--83 (2001)

\bibitem{Auinger&Hall&Reilly&Zhang:1995}
Auinger, K., Hall, T.E., Reilly, N.R., Zhang, S.: Congruences on the lattice of
  pseudovarieties of finite semigroups.
\newblock Int. J. Algebra Comput. \textbf{7}, 433--455 (1997)

\bibitem{Birjukov:1970}
Birjukov, A.P.: Varieties of idempotent semigroups.
\newblock Algebra and Logic \textbf{9}, 255--273 (1970)

\bibitem{Bloom:1976}
Bloom, S.L.: Varieties of ordered algebras.
\newblock J. Comput. System Sci. \textbf{13}, 200--212 (1976)

\bibitem{Emery:1999}
Emery, S.J.: Varieties and pseudovarieties of ordered normal bands.
\newblock Semigroup Forum \textbf{58}, 348--366 (1999)

\bibitem{Fennemore:1971a}
Fennemore, C.: All varieties of bands. {I}.
\newblock Math. Nachr. \textbf{48}, 237--252 (1971)

\bibitem{Fennemore:1971aII}
Fennemore, C.: All varieties of bands. {II}.
\newblock Math. Nachr. \textbf{48}, 253--262 (1971)

\bibitem{Gerhard:1970}
Gerhard, J.A.: The lattice of equational classes of idempotent semigroups.
\newblock J. Algebra \textbf{15}, 195--224 (1970)

\bibitem{Howie:1995}
Howie, J.M.: Fundamentals of semigroup theory, \emph{London Mathematical
  Society Monographs. New Series}, vol.~12.
\newblock The Clarendon Press, Oxford University Press, New York (1995)

\bibitem{Koch&Wallace:1957}
Koch, R.J., Wallace, A.D.: Stability in semigroups.
\newblock Duke Math. J. \textbf{24}, 193--195 (1957)

\bibitem{Krohn&Rhodes:1968}
Krohn, K., Rhodes, J.: Complexity of finite semigroups.
\newblock Ann. of Math. (2) \textbf{88}, 128--160 (1968)

\bibitem{Kufleitner&Weil:2009}
Kufleitner, M., Weil, P.: On {${\bf FO}^2$} quantifier alternation over words.
\newblock In: Mathematical foundations of computer science 2009, \emph{Lect.
  Notes in Comput. Sci.}, vol. 5734, pp. 513--524. Springer, Berlin (2009)

\bibitem{Kuril:2015}
Ku{\v{r}}il, M.: On varieties of ordered semigroups.
\newblock Semigroup Forum \textbf{90}, 475--490 (2015)

\bibitem{Molchanov:1994}
Molchanov, V.A.: Nonstandard characterization of pseudovarieties.
\newblock Algebra Universalis \textbf{33}, 533--547 (1995)

\bibitem{Pastijn:1991}
Pastijn, F.: Pseudovarieties of completely regular semigroups.
\newblock Semigroup Forum \textbf{42}, 1--46 (1991)

\bibitem{Pastijn&Trotter:1988}
Pastijn, F.J., Trotter, P.G.: Residual finiteness in completely regular
  semigroup varieties.
\newblock Semigroup Forum \textbf{37}, 127--147 (1988)

\bibitem{Petrich&Reilly:1999}
Petrich, M., Reilly, N.: Completely regular semigroups, \emph{Canadian Math.
  Soc. Series of Monographs and Advanced Texts}, vol.~23.
\newblock Wiley-Interscience, New York (1999)

\bibitem{Pin:1995a}
Pin, J.E.: A variety theorem without complementation.
\newblock Russian Math. (Iz. VUZ) \textbf{39}, 80--90 (1995)

\bibitem{Pin&Weil:1996b}
Pin, J.E., Weil, P.: A {R}eiterman theorem for pseudovarieties of finite
  first-order structures.
\newblock Algebra Universalis \textbf{35}, 577--595 (1996)

\bibitem{Polak:1985}
Polák, L.: On varieties of completely regular semigroups {I}.
\newblock Semigroup Forum \textbf{32}, 97--123 (1985)

\bibitem{Polak:1987}
Polák, L.: On varieties of completely regular semigroups {II}.
\newblock Semigroup Forum \textbf{36}, 253--284 (1987)

\bibitem{Polak:1988}
Polák, L.: On varieties of completely regular semigroups {III}.
\newblock Semigroup Forum \textbf{37}, 1--30 (1988)

\bibitem{Reilly&Zhang:1997}
Reilly, N.R., Zhang, S.: Complete endomorphisms of the lattice of
  pseudovarieties of finite semigroups.
\newblock Bull. Austral. Math. Soc. \textbf{55}, 207--218 (1997)

\bibitem{Reiterman:1982}
Reiterman, J.: The {B}irkhoff theorem for finite algebras.
\newblock Algebra Universalis \textbf{14}, 1--10 (1982)

\bibitem{Rhodes&Steinberg:2009qt}
Rhodes, J., Steinberg, B.: The $q$-theory of finite semigroups.
\newblock Springer Monographs in Mathematics. Springer (2009)

\bibitem{Schutzenberger:1976}
Schützenberger, M.P.: Sur le produit de concat\'enation non ambigu.
\newblock Semigroup Forum \textbf{13}, 47--75 (1976)

\bibitem{Tesson&Therien:2002}
Tesson, P., Th{\'e}rien, D.: Diamonds are forever: the variety {DA}.
\newblock In: G.M.S. Gomes, J.E. Pin, P.V. Silva (eds.) Semigroups, Algorithms,
  Automata and Languages, pp. 475--499. World Scientific, Singapore (2002)

\end{thebibliography}

\end{document}